\newtheorem{theorem}{Theorem}[section]
\newtheorem{lemma}[theorem]{Lemma}
\newtheorem{definition}[theorem]{Definition}
\newtheorem{rem}[theorem]{Remark}
\newcommand{\Proof}{\par\noindent{\em Proof. }}
\newcommand{\eop}{\nopagebreak\hspace*{\fill}$\Box$\smallskip}
\newcommand{\N}{\Bbb N}
\newcommand{\R}{\Bbb R}
\newcommand{\C}{\Bbb C}
\newcommand{\stress}{{T^E}}
\newcommand{\vstress}{{S}}
\def\Id{\mathbf{Id}}
\def\id{\mathbf{id}}
\def\eps{\varepsilon}
\def\dist{\operatorname{dist}}
\def\XXint#1#2#3{{\setbox0=\hbox{$#1{#2#3}{\int}$}
     \vcenter{\hbox{$#2#3$}}\kern-.5\wd0}}
\newcommand{\BBB}{\color{black}} 
\newcommand{\EEE}{\color{black}} 
\newcommand{\MK}{\color{black}}
\title[On the passage from nonlinear to linearized viscoelasticity]{On the passage from nonlinear to linearized viscoelasticity}
\author{Manuel Friedrich}
\author{Martin Kru\v{z}\'ik}
\subjclass[2010]{74D05, 74D10, 35A15, 35Q74, 49J45}
 \keywords{Viscoelasticity, metric gradient flows, $\Gamma$-convergence, dissipative distance, curves of maximal slope, minimizing movements.}
\address[Manuel Friedrich]{Institute for Computational and Applied Mathematics
  University of  M\"{u}nster, Einsteinstr.~62, D-48149 M\"{u}nster, Germany}
\email{manuel.friedrich@uni-muenster.de}
\address[Martin Kru\v{z}\'ik]{Czech Academy of Sciences, Institute of Information Theory and Automation
Pod vod\'arenskou v\v{e}\v{z}\'i 4, CZ-182 08 Praha 8, Czechia
(corresponding address) \& Faculty of Civil Engineering, Czech Technical University, Th\'akurova 7, CZ-166 29 Praha 6, Czechia}
\email{kruzik@utia.cas.cz}
\begin{document}

\maketitle

\begin{center}
\today
\end{center}
\bigskip

\begin{abstract}
We formulate a quasistatic nonlinear  model  for nonsimple viscoelastic materials at a finite-strain setting in the Kelvin's-Voigt's rheology where the   viscosity stress tensor   complies with the principle of time-continuous frame-indifference.  We identify weak solutions in the nonlinear framework as  limits of time-incremental problems for vanishing time increment. Moreover,   we show that  linearization around  the  identity leads to the standard system  for  linearized viscoelasticity and that solutions of the nonlinear system converge in a suitable sense to solutions of the linear one.  The  same property holds for time-discrete approximations and  we provide a corresponding commutativity result.  Our main tools are the theory of gradient flows  in metric spaces and $\Gamma$-convergence.
\end{abstract}
\bigskip


\section{Introduction}

Neglecting inertia,  a    nonlinear viscoelastic material in Kelvin's-Voigt's rheology obeys the following system of equations 
\begin{align}\label{eq:viscoel}-{\rm div}\Big(\partial_FW(\nabla y)   + \partial_{\dot F}R(\nabla y,\partial_t \nabla y)  \Big) =  f\text{ in $  [0,T] \times \EEE \Omega$.} \end{align}
Here, $[0,T]$ is a process time interval with $T>0$,   $\Omega\subset\R^d$  ($d=2$ or $d=3$)  is a smooth bounded domain representing the reference configuration,  and \EEE  $y:[0,T]\times \Omega\to\R^d$ is a deformation mapping  with corresponding \EEE  deformation gradient $\nabla y$.  Further, \EEE $W:\R^{d\times d}\to  [0,\infty]\EEE$ is a stored energy density, which represents a potential of the first Piola-Kirchhoff stress tensor $\stress$, i.e.,  $\stress:=\partial_F W:=\partial W/\partial F$ and  $F\in\R^{d\times d}$ is the placeholder of $\nabla y$. Finally, $R:  \R^{d \times d} \times \R^{d \times d} \to [0,\infty) \EEE $ denotes a (pseudo)potential of dissipative forces,  where $\dot F \in \R^{d \times d}$ is the placeholder of $\partial_t \nabla y$, \EEE  and $f:[0,T]\times \Omega\to\R^d$ is a volume density of external forces acting on $\Omega$. 
In the present contribution, we consider a version of \eqref{eq:viscoel} for nonsimple materials where the elastic stored energy density depends also on the second gradient  of $y$.  In this case, we get 

\begin{align}\label{eq:viscoel-nonsimple}
-{\rm div}\Big( \partial_F W(\nabla y) + \eps\mathcal{L}_{P}(\nabla^2 y)  + \partial_{\dot{F}}R(\nabla y,\partial_t \nabla y)  \Big) =  f\text{ in $\BBB [0,T] \times \EEE \Omega$,} \end{align}
where $\eps>0$ is small and  $\mathcal{L}_{P}$ is a  first \EEE order differential operator which is associated to an additional term $\int_\Omega P(\nabla^2 y)$   in the stored elastic energy, e.g., for $P(G):=  \frac{1}{2} \EEE |G|^2$ with $G\in\R^{d\times d\times d}$, we get $-{\rm div}\mathcal{L}_{P}(\nabla^2 y)= \Delta^2 y$.   We refer to  \eqref{LP-def} for more details. Thus, we resort to the so-called nonsimple materials, the stored energy density (and the  first Piola-Kirchhoff stress tensor, too) of which depends also on the second gradient of the deformation. This idea was first introduced by Toupin  \cite{Toupin:62,Toupin:64} and proved to be useful in mathematical elasticity, \MK see e.g.~\cite{BCO,Batra, chen, MielkeRoubicek:16,MielkeRoubicek,Podio}  \EEE because it brings additional compactness to the problem.      The first Piola-Kirchhoff stress tensor, $\stress$,  then reads for all $i,j\in\{1,\ldots, d\}$
\begin{align*}
\stress_{ij}(F,G):= \MK \partial_{F_{ij}} W(F) +\eps \big(\mathcal{L}_{P}(G)\big)_{ij}\EEE = \partial_{F_{ij}} W(F) -\eps\sum_{k=1}^d \partial_k \big(\partial_{G_{ijk}}P(G)\big),
\end{align*}
where $G\in\R^{d\times d\times d}$  is the placeholder for the second gradient of $y$.  The term 
$\eps\partial_{G}P(G)$ is usually called  hyperstress.

We standardly assume that $W$ as well as $P$ are frame-indifferent functions, i.e., that $W(F)=W(QF)$  and $P(G)=P(QG)$ for every proper rotation $Q\in{\rm SO}(d)$, every $F\in\R^{d\times d}$, and every $G\in\R^{d\times d\times d}$.  This implies that $W$ depends on the right Cauchy-Green strain tensor  $C:=F^\top F$, see e.g.~\cite{Ciarlet}. \BBB  We wish to emphasize that, in the case of nonsimple materials, \EEE no convexity properties of $W$ are needed, in particular, we do not have to assume that $W$ is polyconvex \cite{Ball:77,Ciarlet}.  Moreover, it is shown in \cite{HealeyKroemer:09} that if $W$ satisfies suitable and physically relevant  growth conditions (as $W(F)\to\infty$ if ${\rm det}\, F\to 0$),   then every minimizer of the elastic energy is a weak solution to the corresponding Euler-Lagrange equations.  

The second term on the left-hand side of \eqref{eq:viscoel} is the \MK viscous \EEE stress tensor 
$\vstress(F,\dot F):= \partial_{\dot F} R(F,\dot F)$ which has its origin in viscous dissipative mechanisms of the material.  Notice that its potential $R$ plays an  analogous role as $W$ in the case of purely elastic, i.e., non-dissipative processes. Naturally, we require that $R(F,\dot F)\ge R(F,0)=0$.  The viscous stress tensor must comply with the time-continuous frame-indifference principle  meaning that for all $F$
\begin{align*}
\vstress(F,\dot F)=F\tilde\vstress(C,\dot C)
\end{align*}
where $\tilde\vstress$ is a symmetric matrix-valued function.   This condition constraints 
$R$ so that \cite{Antmann, Antmann:04,MOS} \MK (see also \cite{Demoulini}) \EEE
\begin{align}\label{eq:frame indifference-R}
R(F,\dot F)=\tilde R(C,\dot C)\ ,
\end{align}
for some nonnegative function $\tilde R$.  In other words, \EEE $R$ must depend on the right Cauchy-Green strain tensor $C$ and its time derivative $\dot C$. 

In this work, we are interested in the case of small strains, i.e., when  $\nabla u:=\nabla y - \Id$ is of order $\delta$ for some small $\delta >0$. Here, $u:=y-\id$ is the displacement corresponding to $y$ with  $\id$ and $\Id$ standing for the identity map and identity matrix, respectively. Such a property is certainly meaningful if one considers initial values $y_0$ with $\Vert \nabla y_0 - \Id \Vert_{L^2(\Omega)} \le \delta$. Therefore, it is convenient to define the rescaled displacement $u = \delta^{-1}(y - \id)$. Introducing a proper scaling in the above equation we get
\begin{align}\label{eq:viscoel-nonsimple-scaled}
-{\rm div}\Big( \delta^{-1}\partial_F W(\Id + \delta \nabla u) + \tilde{\eps}\mathcal{L}_{P}(\delta\nabla^2 u)  + \delta^{-1}\partial_{\dot{F}}R(\Id + \delta \nabla u, \delta \partial_t \nabla u)  \Big) = f\end{align}
for $\tilde{\eps}=\tilde{\eps}(\delta)$ appropriate. \MK Note that to obtain \eqref{eq:viscoel-nonsimple-scaled} from \eqref{eq:viscoel} we write the latter equation for $f:=\delta f$ and then divide the whole equation by $\delta$. \EEE
 Formally, we can pass to the limit and obtain the equation (for $\tilde{\eps} \to 0$ as $\delta \to 0$)
\begin{align}\label{eq:viscoel-small} -{\rm div}\Big( \C_W e(u) + \C_D  e(\partial_t u)   \Big) = f,
 \end{align}
 where $\C_W:=\partial^2_{F^2}W(\Id)$ is the tensor of elastic constants, $\C_D:= \partial^2_{\dot F^2}R(\Id,0)$ is the tensor of viscosity coefficients,  and $e(u):=(\nabla u+(\nabla u)^\top)/2$ denotes the linear strain tensor. 

 The goal of this contribution is twofold: we first show existence of solutions \MK to the nonlinear system of equations \eqref{eq:viscoel-nonsimple-scaled}. \EEE  Afterwards, \EEE we make the limit passage rigorous, i.e., we show that solutions to the nonlinear equations converge to  the unique \EEE solution of the linear systems as $\delta\to 0$.  Interestingly, 
although the nonlinear viscoelastisity systems is written for a nonsimple material, in the limit we obtain the  standard \EEE linear equations without spatial gradients of $e(u)$.

Our general strategy is to treat the system of quasistatic viscoelasticity in the abstract setting of metric gradient flows \cite{AGS} which was, to our best knowledge, formulated  for the first time in  \cite{MOS} for simple materials (i.e.~only the first gradient of $y$ is considered).
 However,  in their setting, \EEE a passage from time-discrete problems to a continuous one is  only 
possible in a specific  one-dimensional case. See  also \cite{BallSenguel:15} for a related approach in materials undergoing phase transition. This, in our opinion,  also supports models of nonsimple  materials as their linearization \BBB leads to \EEE the usual small-strain viscoelasticity model which seems unreachable (or at least rather difficult) in the case of simple materials.

\BBB
An abstract framework for the study of metric gradient flows   along a sequence of energies and metric spaces  has been developed in \cite{S1,S2}. In practice, for each specific problem the challenge lies in proving that the additional conditions needed to ensure convergence of gradient flows are satisfied  (we refer to \cite{S2} for some examples in that direction). Our aim is to show that the passage of nonlinear to linearized viscoelasticity can be formulated in this setting.  Let us also mention that a rigorous analysis of  the static, purely elastic case without viscosity goes back to \cite{DalMasoNegriPercivale:02}. 
\EEE

 \MK
Heuristically, the idea of gradient flows in metric spaces stems from the observation that,  having a Hilbert space  (equipped with the dot product $\langle\cdot,\cdot\rangle$),  the inequality 
$$
|u'|^2 +2\langle u',\nabla\phi(u)\rangle+|\nabla\phi(u)|^2\ge 0  
$$
 becomes equality if and only if
$$ u'=-\nabla \phi(u),
$$
i.e., if $u$ solves the gradient flow equation.
This approach can be extended to metric spaces provided we are able to find analogies to $|u'|$ and $|\nabla\phi|$ in metric spaces. These are called the metric derivative and the upper gradient (or slope), respectively.  Precise definitions can be found in Section~\ref{sec: defs} below.
\EEE

The plan of the paper is as follows.  In Section~\ref{sec:Model}, we introduce the nonlinear and linear  systems   of  viscoelasticity \EEE in more detail and state our main results.  In particular, Theorem~\ref{maintheorem1} and Theorem~\ref{maintheorem2} show the existence of  solutions to the nonlinear and linear problems, respectively. These solutions can be identified with  so-called  \emph{curves of maximal slope} \BBB introduced in \cite{DGMT}. \EEE   Proofs of  existence rely on semidiscretization in time, and on the theory of \emph{generalized minimizing movements} and gradient flows in metric spaces \cite{AGS},  where the underlying metric is given by a \emph{dissipation distance} suitably related to the potential $R$ (see \eqref{intro:R}). \EEE  Finally, Theorem~\ref{maintheorem3} shows the relationship between the two systems. Besides convergence of solutions of \eqref{eq:viscoel-nonsimple} to solutions of \eqref{eq:viscoel-small}, we also get analogous convergences for semidiscretized problems.  Moreover, convergences for vanishing time step and $\delta\to 0$ commute, see Figure~\ref{diagram}.  (For a related commutativity result in an abstract setting we refer to \cite{BCGS}.)

 Section~\ref{sec3} is devoted  to definitions of generalized minimizing movements (GMM) and curves of maximal slope.  \BBB Here we also collect the necessary existence results proved in \cite{AGS}. Moreover, we present a statement similar to \cite{Ortner,S2} about  sequences of curves of maximal slope and their limits as well as a corresponding result for minimizing movements.    \EEE

  Further, Section~\ref{sec:energy-dissipation} shows interesting properties of dissipation distances related to our viscous dissipation.  It turns out that by frame indifference \eqref{eq:frame indifference-R} the dissipation distances are genuinely non-convex. However, due to the presence of the higher order gradient we are able to obtain sufficiently good convexity properties in order to apply the abstract theory  \BBB \cite{AGS, S2}. \EEE Finally, proofs of our results can be found in Section~\ref{sec results}.  In particular, we relate curves of maximal slope for the nonlinear system with limiting curves of maximal slope as $\delta\to 0$ and identify these configurations as weak solutions of \eqref{eq:viscoel-nonsimple} and \eqref{eq:viscoel-small}. \EEE

 In what follows, we use standard notation for Lebesgue spaces,  $L^p(\Omega)$, which are measurable maps on $\Omega\subset\R^d$ integrable with the $p$-th power (if $1\le p<+\infty$) or essentially bounded (if $p=+\infty$).      Sobolev spaces, i.e., $W^{k,p}(\Omega)$ denote the linear spaces of  maps  which, together with their derivatives up to the order $k\in\N$, belong to $L^p(\Omega)$. \BBB Further,  $W^{k,p}_0(\Omega)$ contains maps from $W^{k,p}(\Omega)$ having zero boundary conditions (in the sense of traces). \EEE In order to emphasize its Hilbert structure, we write $H^1(\Omega):=W^{1,2}(\Omega)$.  We also  work with  the dual space to $H^1_0(\Omega)$ denoted by $H^{-1}(\Omega)$. We refer to \cite{AdamsFournier:05} for more details on Sobolev spaces and their duals. 

If $A\in\R^{d\times d\times d\times d}$ and $e\in\R^{d\times d}$ then $Ae\in\R^{d\times d}$ such that for $i,j\in\{1,\ldots, d\}$ we define  $(Ae)_{ij}:=A_{ijkl}e_{kl}$ where we use Einstein's summation convention. An analogous  convention is used in similar  occasions, in the sequel.
Finally, at many spots, we  follow closely notation introduced in \cite{AGS} to ease readability of our work because the theory developed there is one of the main tools of our analysis.

\section{The model and main results}\label{sec:Model}

\subsection{The nonlinear setting}

We adopt the usual setting of nonlinear elasticity: consider $\Omega \subset \R^d$ open, bounded with Lipschitz boundary. Fix $\delta>0$ (small), $p>d$ and  $0< \alpha<1$. The parameter $\tilde\eps(\delta)$ introduced in \eqref{eq:viscoel-nonsimple-scaled} is defined as $\tilde\eps(\delta):=\delta^{1-p\alpha}$.  

\textbf{Stored elastic energy and body forces:} We introduce the nonlinear elastic energy $\phi_\delta: W^{2,p}(\Omega;\R^d) \to [0,\infty]$ by 
\begin{align}\label{nonlinear energy}
\phi_\delta(y) = \frac{1}{\delta^2}\int_\Omega W(\nabla y(x))\, dx + \frac{1}{\delta^{p\alpha}}\int_\Omega P(\nabla^2 y(x)) \, dx - \frac{1}{\delta}\int_\Omega f(x)\cdot y(x) \, dx
\end{align}
for a \emph{deformation} $y: W^{2,p}(\Omega;\R^d) \to \R^d$. Here, $W: \R^{d \times d} \to [0,\infty]$ is a single well, frame indifferent stored energy functional with the usual assumptions in nonlinear elasticity. Altogether, we suppose  that there exists $c>0$ such that
\begin{align}\label{assumptions-W}
\begin{split}
(i)& \ \ W \text{ continuous and $C^3$ in a neighborhood of $SO(d)$},\\
(ii)& \ \ \text{Frame indifference: } W(QF) = W(F) \text{ for all } F \in \R^{d \times d}, Q \in SO(d),\\
(iii)& \ \ W(F) \ge c\dist^2(F,SO(d)), \  W(F) = 0 \text{ iff } F \in SO(d),
\end{split}
\end{align}
where $SO(d) = \lbrace Q\in \R^{d \times d}: Q^\top Q = \Id, \, \det Q=1 \rbrace$. Moreover, $P: \R^{d\times d \times d} \to [0,\infty]$ denotes a higher order perturbation satisfying 
\begin{align}\label{assumptions-P}
\begin{split}
(i)& \ \ \text{frame indifference: } P(QG) = P(G) \text{ for all } G \in \R^{d \times d \times d}, Q \in SO(d),\\
(ii)& \ \ \text{$P$ is convex and $C^1$},\\
(iii)& \ \ \text{growth condition: For all $G \in \R^{d \times d \times d}$ we have } \\&   \ \ \ \ \ \    c_1 |G|^p \le P(G) \le c_2 |G|^p, \ \ \ \ \ \   \BBB |\partial_G P(G)| \EEE \le c_2 |G|^{p-1} \EEE
\end{split}
\end{align}
for $0<c_1<c_2$. Finally, $f \in L^\infty(\Omega;\R^d)$ denotes a volume force.  From now on we always drop  the target space $\R^d$ for notational convenience when no confusion arises. \EEE  We remark that by minor  adaptions of our arguments we can also treat potentials with additional dependence on the material point $x \in \Omega$. We scale the energy appropriately with a (small) positive parameter $\delta$ as we will eventually be interested in the behavior in the small strain limit $\delta \to 0$.  

\textbf{Dissipation potential and viscous stress:} Consider a time dependent deformation $y: [0,T] \times \Omega \to \R^d$. Viscosity is not only related to the strain $\nabla y(t,x)$, but also to the strain rate $\partial_t \nabla  y(t,x)$ and can be expressed in terms of a  dissipation potential $R(\nabla y, \partial_t \nabla y)$, where $R: \R^{d \times d} \times \R^{d \times d} \to [0,\infty)$. An admissible potential has to satisfy frame indifference in the sense (see \cite{Antmann, MOS})
\begin{align}\label{R: frame indiff}
R(F,\dot{F}) = R(QF,Q(\dot{F} + AF))  \ \ \  \forall  Q \in SO(d), A \in {\rm Skew}(d)
\end{align}
for all $F \in GL_+(d)$ and $\dot{F} \in \R^{d \times d}$, where $GL_+(d) = \lbrace F \in \R^{d \times d}: \det F>0 \rbrace$ and ${\rm Skew}(d) = \lbrace A  \in \R^{d \times d}: A=-A^\top \rbrace$. 

Following the discussion in \cite[Section 2.2]{MOS}, from the point of modeling  it is much more  convenient to postulate the existence of a (smooth) global distance $D: GL_+(d) \times GL_+(d) \to [0,\infty)$ satisfying $D(F,F) = 0$ for all $F \in GL_+(d)$, from which an associated dissipation potential $R$ can be calculated by
\begin{align}\label{intro:R}
R(F,\dot{F}) := \lim_{\eps \to 0} \frac{1}{2\eps^2} D^2(F+\eps\dot{F},F) = \frac{1}{4} \partial^2_{F_1^2} D^2(F,F) [\dot{F},\dot{F}]
\end{align}
for $F \in GL_+(d)$, $\dot{F} \in \R^{d \times d}$,  where $\partial^2_{F_1^2} D^2(F_1,F_2)$ denotes the Hessian of  $D^2$ in direction of $F_1$ at $(F_1,F_2)$, being a fourth order tensor.   We have the following assumptions on $D$ for some $c>0$.
\begin{align}\label{eq: assumptions-D}
(i) & \ \ D(F_1,F_2)> 0 \text{ if } F_1^\top F_1 \neq F_2^\top F_2,\notag \\
(ii) & \ \ D(F_1,F_2) = D(F_2,F_1),\\
(iii) & \ \ D(F_1,F_3) \le D(F_1,F_2) + D(F_2,F_3),\notag \\
(iv) & \ \ \text{$D(\cdot,\cdot)$ is $C^3$ in a neigborhood of $SO(d) \times SO(d)$},\notag 
\\
(v)& \ \ \text{Separate frame indifference: } D(Q_1F_1,Q_2F_2) = D(F_1,F_2)\notag \\
& \ \  \ \ \ \ \ \ \ \ \    \ \  \ \ \ \ \ \ \ \ \    \ \  \ \ \ \ \ \ \ \ \    \ \  \ \ \ \ \ \ \ \ \   \forall Q_1,Q_2 \in SO(d),  \ \forall F_1,F_2 \in GL_+(d),\notag\\ 
(vi) & \ \ \text{$D(F,\Id) \ge c\dist(F,SO(d))$  $\forall F \in \R^{d \times d}$ in a neighborhood of $SO(d)$}.\notag
\end{align}
Note that conditions (i),(iii) state that $D$ is a true distance when restricted to symmetric matrices. We can not expect more due to the separate frame indifference (v). We also note that (v) implies \eqref{R: frame indiff} as shown in \cite[Lemma 2.1]{MOS}. Note that in our model we do not require any conditions of polyconvexity neither for $W$ nor for $D$ \cite{Ball:77}.  For examples of admissible dissipation distances we refer the reader to \cite[Section 2.3]{MOS}.   

\textbf{Equations of nonlinear viscoelasticity:} We will impose the boundary conditions $y(t,x) = x$ for $(t,x) \in [0,T] \times \partial \Omega$  and for convenience we define  the set $W^{2,p}_\id(\Omega)= \lbrace y = \id + u \in W^{2,p}(\Omega): u \in W^{2,p}_0(\Omega) \rbrace$, where $\id$ denotes the identity function on $\Omega$.
 We remark that our results can be extended to  more general Dirichlet boundary conditions, too, which we do not include here for the sake of maximizing simplicity rather than generality. We now introduce a differential operator associated to the perturbation $P$ (cf. \eqref{assumptions-P}). To this end, we use the notation  $(\nabla y)_{ik} = \partial_k y_i$ and $(\nabla^2 y)_{ijk} = \partial^2_{jk} y_i$ for $i,j,k \in \lbrace 1,\ldots,d\rbrace$ \EEE and define
\begin{align}\label{LP-def}
\big(\mathcal{L}_P(\nabla^2 y)\big)_{ij} =  -\sum\nolimits_{k=1}^d  \partial_k(\partial_GP(\nabla^2 y))_{ijk}, \ \ \ \ \BBB i,j \EEE \in \lbrace 1,\ldots, d\rbrace
\end{align}   
for $y \in W^{2,p}_\id(\Omega)$, where the derivatives have to be understood in the sense of distributions. The equations of nonlinear viscoelasticity then read as (respecting the different scalings of the terms in \eqref{nonlinear energy})
\begin{align}\label{nonlinear equation}
\begin{split}
\begin{cases} -  {\rm div}\Big( \partial_FW(\nabla y) + \delta^{2-p\alpha}\mathcal{L}_{P}(\nabla^2 y)  + \partial_{\dot{F}}R(\nabla y,\partial_t \nabla y)  \Big) = \delta f  & \text{in } [0,\infty) \times \Omega \\
y(0,\cdot) = y_0 & \text{in } \Omega \\
y(t,\cdot) \in W^{2,p}_\id(\Omega) &\text{for } t\in [0,\infty)
\end{cases}
\end{split}
\end{align}
for some $y_0 \in W^{2,p}_\id(\Omega)$, where $\partial_FW(\nabla y(t,x))$ denotes the first  \emph{Piola-Kirchhoff stress tensor} and $\partial_{\dot{F}}R(\nabla y(t,x),\partial_t \nabla y(t,x))$ the \emph{viscous stress} with $R$ as introduced in \eqref{intro:R}. 
 The first goal of the present contribution is  to prove the existence of weak solutions to \eqref{nonlinear equation}.  More precisely, we say that $y \in L^\infty([0,\infty);W^{2,p}_{\id}(\Omega)) \cap W^{1,2}([0,\infty);H^1(\Omega))$ is a \emph{weak solution} of \eqref{nonlinear equation} if $y(0,\cdot) = y_0$ and for a.e. $t \ge 0$
 \begin{align}\label{nonlinear equation2}
 \begin{split}
& \int_\Omega \Big( \partial_FW(\nabla y(t,x)) +  \partial_{\dot{F}}R(\nabla y(t,x),\partial_t \nabla y(t,x))\Big)  : \nabla \varphi(x) \, dx  \\
& \ \ \ \  \ \ \ \ \  + \int_\Omega\delta^{2-p\alpha} \partial_GP(\nabla^2 y(t,x)) :\nabla^2 \varphi(x) \, dx  = \delta \int_\Omega  f(x) \cdot \varphi(x) \, dx  
\end{split}
 \end{align}
 for all $\varphi \in W^{2,p}_0(\Omega)$. In particular, we note that the first term in the second line is well defined for a weak solution by \eqref{assumptions-P}(iii) and H\"older's inequality.

 \EEE

\subsection{The linear problem} 
After rescaling with $\delta^{-1}$ and introducing the rescaled displacement field $u(t,x) = \delta^{-1} (y(t,x)-x)$, the partial differential equation \eqref{nonlinear equation} can be written as 
$$-{\rm div}\Big( \delta^{-1}\partial_FW(\id + \delta \nabla u) + \delta^{1-p\alpha}\mathcal{L}_{P}(\delta\nabla^2 u)  + \delta^{-1}\partial_{\dot{F}}R(\id + \delta \nabla u, \delta \partial_t \nabla u)  \Big) = f$$
with an initial datum $u_0 = \delta^{-1}(y_0 - \id)$. For $\alpha$ small, letting $\delta \to 0$ we obtain, at least formally, the equation 
\begin{align}\label{linear equation}
\begin{split}
\begin{cases} -{\rm div}\Big( \C_W e(u) + \C_D  e(\partial_t u)   \Big) = f   & \text{in } [0,\infty) \times \Omega \\
u(0,\cdot) = u_0 & \text{in } \Omega \\
u(t,\cdot) \in H^1_{0}(\Omega) &\text{for } t\in [0,\infty),
\end{cases}
\end{split}
\end{align}
where $\C_W := \partial^2_{F^2} W(\Id)$ and $\C_D :=  \frac{1}{2}\partial^2_{F_1^2} D^2(\Id,\Id)  $ (cf. \eqref{intro:R}). Note that the frame indifference of the energy and the dissipation (see \eqref{assumptions-W}(ii) and \eqref{eq: assumptions-D}(v), respectively) imply that the contributions only depend on the symmetric part of the strain $e(u) = \frac{1}{2}(  \nabla u  +(\nabla u)^\top)$ and the strain rate  $e(\partial_t u) = \frac{1}{2}( \partial_t \nabla u + \partial_t (\nabla u)^\top)$. Let us also mention that the stress tensor is related to the linearized elastic energy $\bar{\phi}_0 : H_0^1(\Omega) \to [0,\infty)$ given by
\begin{align}\label{linear energy}
\bar{\phi}_0(u) = \int_\Omega \frac{1}{2}\C_W[e(u)(x),  e(u)(x)] \, dx - \int_\Omega f(x) \cdot u(x) \,dx
\end{align}
for $u \in H^1_0(\Omega)$. The  goal of this article is to show that the above reasoning can be made rigorous: we will prove that \eqref{linear equation} admits a unique weak solution and that solutions of \eqref{nonlinear equation} converge to the solution of \eqref{linear equation} in a suitable sense.  Here, similarly as before, we say $u \in W^{1,2}([0,\infty); H^1_0(\Omega))$ is a \emph{weak solution} of \eqref{linear equation}   if $u(0,\cdot) = u_0$ and for a.e. $t \ge 0$ and all $\varphi \in H^{1}_0(\Omega)$ we have 
$$\int_\Omega  ( \C_W e(u) + \C_D  e(\partial_t u) )  : \nabla \varphi   =  \int_\Omega  f\cdot \varphi$$.  \EEE

\subsection{Main results}
Let us introduce the \emph{global dissipation distance} between two deformations for the nonlinear and linear setting by
\begin{align}\label{eq: D,D0}
\begin{split}
 &\mathcal{D}_\delta(y_0,y_1) = \delta^{-1}\Big(\int_\Omega D^2(\nabla y_0, \nabla y_1) \Big)^{1/2}, \\
 & \bar{\mathcal{D}}_0(u_0,u_1) = \Big(\int_\Omega \C_D[\nabla u_0 - \nabla u_1,\nabla u_0 - \nabla u_1]   \Big)^{1/2}
 \end{split}
\end{align}
for $y_0,y_1 \in W^{2,p}_\id(\Omega)$ and $u_0,u_1 \in H^1_0(\Omega)$, respectively. (In many notations we include an overline to indicate that the notion is related to the linear setting.) We also define the sublevel sets $\mathscr{S}_\delta^M := \lbrace y\in W^{2,p}_\id(\Omega): \phi_\delta(y) \le M\rbrace$. (For convenience we do not include $\Omega$ in the notation.)  Our general strategy will be to show that the spaces $(\mathscr{S}_\delta^M, \mathcal{D}_\delta)$ and $(H^1_0(\Omega), \bar{\mathcal{D}}_0)$ are complete metric spaces and to follow the approach in \cite{AGS} (see Theorem \ref{th: metric space} and Theorem \ref{th: metric space-lin} below).

 In particular, to show existence of solutions to the problems \eqref{nonlinear equation} and \eqref{linear equation},  we will apply an approximation scheme solving suitable time-incremental minimization problems and show that \BBB time-continuous \EEE limits are  curves of maximal slope for the elastic energies $\phi_\delta, \bar{\phi}_0$, respectively. Finally, using the property that in Hilbert spaces curves of maximal slope can be related to gradient flows, we find solutions to \eqref{nonlinear equation}, \eqref{linear equation}.

 Moreover, to study the relation between the nonlinear and linear problem we will apply some results about the limit of sequences of curves of maximal slope proved in Section \ref{sec: auxi-proofs}.
 
 For the main definitions and notation for discrete solutions, (generalized) minimizing movements   (abbreviated by MM and GMM, see Definition~\ref{main def1})  and curves of maximal slope we refer to Section \ref{sec: defs}. In particular, we define $\Phi_\delta$ and $\bar{\Phi}_0$, respectively, as in \eqref{incremental} replacing $\phi, \mathcal{D}$ by $\phi_\delta,\mathcal{D}_\delta$ and $\bar{\phi}_0,\bar{\mathcal{D}}_0$, respectively. Moreover, we write $|\partial \phi_\delta|_{\mathcal{D}_\delta}$, $|\partial \bar{\phi}_0|_{\bar{\mathcal{D}}_0}$ for the (local) slopes and $|y'|_{\mathcal{D}_\delta}$, $|u'|_{\bar{\mathcal{D}}_0}$ for the metric derivatives, respectively (see Definition \ref{main def2}). Finally, discrete solutions for time step $\tau > 0$ will be denoted by \MK  $\tilde{Y}^\delta_\tau$ and $\tilde{U}^0_\tau$, respectively. \EEE

Our first main result addresses the existence of solutions to the nonlinear problem.

\begin{theorem}[Solutions to the nonlinear problem]\label{maintheorem1}
Let $M>0$ and $\mathscr{S}_\delta^M = \lbrace y \in W^{2,p}_\id(\Omega): \phi_\delta(y) \le M\rbrace$. Then for $\delta>0$ sufficiently small only depending on $M$ the following holds:

(i)   (Existence of GMM) \EEE $GMM(\Phi_\delta;y_0) \neq \emptyset$ for all $y_0 \in \mathscr{S}^M_\delta$.

(ii)  (Curves of maximal slope) \EEE For all $y_0  \in \mathscr{S}^M_\delta $ each  $y \in GMM(\Phi_\delta;y_0)$ is a curve of maximal slope for $\phi_\delta$
  with respect to the strong upper gradient $|\partial \phi_\delta|_{\mathcal{D}_\delta}$, in particular for all $T>0$ 
  we have the energy identity
 \begin{align}\label{slopesolution}
 \frac{1}{2} \int_0^T |y'|_{\mathcal{D}_\delta}^2(t)\,dt + \frac{1}{2} \int_0^T |\partial \phi_\delta|^2_{\mathcal{D}_\delta}(y(t))\,dt + \phi_\delta(y(T)) = \phi_\delta(y_0).
 \end{align}
  
  (iii)   (Relation to PDE) \EEE  For all $y_0  \in \mathscr{S}^M_\delta $ each  $y \in GMM(\Phi_\delta;y_0)$  is a weak  solution   of the partial differential equations of nonlinear viscoelasticity \eqref{nonlinear equation}  in the sense of  \eqref{nonlinear equation2}. \EEE

\end{theorem}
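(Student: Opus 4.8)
The plan is to prove the three parts in a chain, each feeding into the next, relying on the metric-space gradient-flow machinery of \cite{AGS} together with the structural properties of $\phi_\delta$ and $\mathcal{D}_\delta$ established in the preceding sections. First I would verify the abstract hypotheses of the existence theory on the metric space $(\mathscr{S}_\delta^M,\mathcal{D}_\delta)$: namely that it is complete (this is promised as Theorem~\ref{th: metric space} below), that $\phi_\delta$ is lower semicontinuous with respect to $\mathcal{D}_\delta$-convergence (equivalently with respect to weak $W^{2,p}$ convergence, using the coercivity $c_1|G|^p\le P(G)$ and frame-indifferent lower semicontinuity of the $W$-term), and that sublevel sets are $\mathcal{D}_\delta$-compact. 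The crucial coercivity input is \eqref{assumptions-P}(iii): a bound $\phi_\delta(y)\le M$ controls $\|\nabla^2 y\|_{L^p}$, which via the boundary condition $y\in W^{2,p}_\id(\Omega)$ and Poincaré-type inequalities yields compactness in the relevant topology. Once these are in place, part~(i) follows directly from the abstract existence result for generalized minimizing movements, and for $\delta$ small (depending on $M$) the sublevel set $\mathscr{S}_\delta^M$ stays in the $C^3$-neighborhood of $SO(d)$ where the smoothness assumptions \eqref{assumptions-W}(i) and \eqref{eq: assumptions-D}(iv) are usable.

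For part~(ii) the key is to identify $|\partial\phi_\delta|_{\mathcal{D}_\delta}$ as a \emph{strong} upper gradient and to verify that $\phi_\delta$ satisfies the convexity/regularity condition along $\mathcal{D}_\delta$-geodesics (or generalized geodesics) required by \cite{AGS} so that every GMM is in fact a curve of maximal slope with the energy identity \eqref{slopesolution} holding with equality rather than merely inequality. Here I expect to invoke the results of Section~\ref{sec:energy-dissipation}: by frame indifference $\mathcal{D}_\delta$ is genuinely non-convex, but the second-gradient perturbation $P$ restores enough convexity to apply the abstract theory. The energy identity then comes from the general principle that for curves of maximal slope the a~priori dissipation inequality
\[
\frac{1}{2}\int_0^T |y'|_{\mathcal{D}_\delta}^2(t)\,dt + \frac{1}{2}\int_0^T |\partial\phi_\delta|^2_{\mathcal{D}_\delta}(y(t))\,dt + \phi_\delta(y(T)) \le \phi_\delta(y_0)
\]
holds, while the reverse inequality follows from the chain rule for $\phi_\delta$ along absolutely continuous curves (valid once $|\partial\phi_\delta|_{\mathcal{D}_\delta}$ is a strong upper gradient), forcing equality.

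Part~(iii) is where the analysis connects back to the PDE, and I expect this to be the main obstacle. The strategy is to compute the slope $|\partial\phi_\delta|_{\mathcal{D}_\delta}(y)$ explicitly and to show that the energy identity \eqref{slopesolution} forces the pointwise (in time) relation characterizing the curve of maximal slope to coincide with the weak formulation \eqref{nonlinear equation2}. Concretely, one differentiates $\phi_\delta$ along $y(t)$ and uses \eqref{intro:R} to recognize that the metric velocity $|y'|_{\mathcal{D}_\delta}$ encodes precisely the viscous dissipation $\int_\Omega R(\nabla y,\partial_t\nabla y)$, so that the identity becomes an energy balance whose Euler--Lagrange condition is \eqref{nonlinear equation2}. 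The delicate points are: (a) justifying that the quadratic form defining $R$ in \eqref{intro:R} is the correct metric derivative, which requires a second-order Taylor expansion of $D^2$ near the diagonal and control of the error uniformly on $\mathscr{S}_\delta^M$; (b) verifying that the first term $\int_\Omega \delta^{2-p\alpha}\partial_G P(\nabla^2 y):\nabla^2\varphi$ and the viscous term are well defined for the test functions $\varphi\in W^{2,p}_0(\Omega)$, using \eqref{assumptions-P}(iii) and Hölder as noted in the excerpt; and (c) passing from the slope characterization, which a~priori only identifies the subdifferential, to the precise weak formulation against all admissible $\varphi$. I would handle (c) by exploiting the Hilbertian structure underlying the dissipation: since $R$ is a (frame-indifferent) quadratic form in the strain rate, the slope is realized by a genuine gradient in an appropriate $L^2$-type inner product, and curves of maximal slope in Hilbert-like settings are exactly solutions of the gradient-flow equation, which unfolds into \eqref{nonlinear equation2}. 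The smallness of $\delta$ is again used to keep $\nabla y$ inside the good neighborhood of $SO(d)$ so that all the differentiations are legitimate.
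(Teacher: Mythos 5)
Your overall architecture coincides with the paper's: parts (i),(ii) are obtained by verifying the hypotheses of the metric-space theory of \cite{AGS} on the complete metric space $(\mathscr{S}^M_\delta,\mathcal{D}_\delta)$ (completeness, compactness of sublevels, lower semicontinuity, strong upper gradient), and part (iii) by an explicit computation of the slope combined with a chain-rule/Young's-inequality argument forcing equality. However, two steps as you state them would not go through.

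First, in (ii) you propose to verify the convexity condition of \cite{AGS} along (generalized) geodesics so that every GMM becomes a curve of maximal slope. That is not the mechanism, and literally checking it fails here: by frame indifference $\mathcal{D}_\delta$ is genuinely non-convex, and the interpolation estimates one can actually prove (Theorem \ref{th: convexity}) carry the multiplicative defect $(1+C\Vert \nabla y_1-\nabla y_0\Vert_{L^\infty(\Omega)})$ and only yield $2(1-C\delta^\alpha)$-convexity — the paper explicitly notes this is not strong enough to apply the convexity-based results of \cite[Sections 2.4 and 4]{AGS} (i.e.\ Theorem \ref{th: auxiliary2}), which are used only for the \emph{linear} problem. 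The theorem actually invoked for the nonlinear problem, Theorem \ref{th: auxiliary1} (= \cite[Thm.\ 2.3.3]{AGS}), needs no geodesic convexity: it requires only that $|\partial\phi_\delta|_{\mathcal{D}_\delta}$ be a $\sigma$-lower semicontinuous strong upper gradient. The perturbed convexity enters solely to derive the modified global representation of the local slope with the correction factor $(1+C\Vert \nabla y-\nabla w\Vert_{L^\infty(\Omega)})^{1/2}$ (Lemma \ref{lemma: slopes}(i)), from which lower semicontinuity and the strong-upper-gradient property (via $\mathcal{S}_{\phi_\delta}\le 2|\partial\phi_\delta|_{\mathcal{D}_\delta}$ for $\delta$ small) follow.

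Second, in (iii) your step (c) — realizing the slope as a genuine gradient in an $L^2$-type inner product — is the heart of the matter, and your plan is missing the device that makes it rigorous: the higher-order term $\mathcal{L}_P(\nabla^2 y)$ is a priori only a distribution in $W^{-1,p/(p-1)}(\Omega)$, so one must construct an $L^2$ representative $\mathcal{L}^*_P(y)$ with ${\rm div}\,\mathcal{L}^*_P(y)={\rm div}\,\mathcal{L}_P(\nabla^2 y)$ by solving an auxiliary quadratic minimization weighted by $H_{\nabla y}$, and prove the dichotomy that $|\partial\phi_\delta|_{\mathcal{D}_\delta}(y)$ is finite if and only if ${\rm div}\,\mathcal{L}_P(\nabla^2 y)\in H^{-1}(\Omega)$ (Lemma \ref{lemma: nonlin-slope}). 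The energy identity \eqref{slopesolution} then gives finiteness of the slope, hence $\mathcal{L}^*_P(y(t))\in L^2(\Omega)$, for a.e.\ $t$ — precisely what allows one to upgrade the maximal-slope property to the pointwise identity $\partial_FW(\nabla y)+\delta^{2-p\alpha}\mathcal{L}^*_P(y)+H_{\nabla y}\,\partial_t\nabla y=0$ a.e.\ in $\Omega$ and then test it against all $\varphi\in W^{2,p}_0(\Omega)$ to obtain \eqref{nonlinear equation2}. Without this construction the passage from curve of maximal slope to the weak formulation does not close. The remaining ingredients of your outline (coercivity of $P$ giving $W^{2,p}$-compactness, rigidity keeping $\nabla y$ near $SO(d)$ for $\delta$ small depending on $M$, Taylor expansion of $D^2$ identifying the metric derivative with the viscous dissipation) match the paper's Lemmas \ref{lemma:rigidity}, \ref{lemma: metric space-properties} and Theorem \ref{th: metric space}.
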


For the  linearized model we obtain the following   results.

\begin{theorem}[Solutions to the linear problem]\label{maintheorem2}
The limiting linear problem has the following properties.

(i)  (Existence/Uniqueness of MM) \EEE For all $u_0 \in H_0^1(\Omega)$ there exists a unique $u \in MM(\bar{\Phi}_0;u_0)$.

(ii)  (Curves of maximal slope) \EEE For all $u_0  \in H_0^1(\Omega)$ the minimizing movement $u \in  MM(\bar{\Phi}_0;u_0)$ is the unique curve of maximal slope for $\bar{\phi}_0$   with respect to the strong upper gradient $|\partial \bar{\phi}_0|_{\bar{\mathcal{D}}_0}$.
  
  (iii)  (Relation to PDE) \EEE For all $u_0  \in H_0^1(\Omega)$ the unique $u \in MM(\Phi_\delta;u_0)$  is a weak solution of the partial differential equations of linear viscoelasticity \eqref{linear equation}.

\end{theorem}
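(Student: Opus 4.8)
The plan is to exploit the fact that, in contrast to the nonlinear problem of Theorem~\ref{maintheorem1}, the linearized setting is entirely convex and Hilbertian, so that the three assertions follow from the classical theory of gradient flows of convex functionals once the correct Hilbert structure is identified.

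First I would record that $\bar{\mathcal{D}}_0$ in \eqref{eq: D,D0} is the distance induced by the bilinear form $\langle u,v\rangle_{\bar{\mathcal{D}}_0} := \int_\Omega \C_D[e(u),e(v)]$, using that by the separate frame indifference \eqref{eq: assumptions-D}(v) the tensor $\C_D = \frac{1}{2}\partial^2_{F_1^2}D^2(\Id,\Id)$ acts only on symmetric matrices, so that $\C_D[\nabla u - \nabla v, \nabla u - \nabla v] = \C_D[e(u)-e(v),e(u)-e(v)]$; symmetry of $\C_D$ as a Hessian makes $\langle\cdot,\cdot\rangle_{\bar{\mathcal{D}}_0}$ bilinear and symmetric. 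Positive definiteness of $\C_D$ on symmetric matrices (coming from \eqref{eq: assumptions-D}(vi), cf.\ Section~\ref{sec:energy-dissipation}) together with Korn's inequality on $H^1_0(\Omega)$ then shows that $\langle\cdot,\cdot\rangle_{\bar{\mathcal{D}}_0}$ is an inner product equivalent to the standard one, i.e.\ $(H^1_0(\Omega),\bar{\mathcal{D}}_0)$ is a Hilbert space (this is the content of Theorem~\ref{th: metric space-lin}). In the same way, \eqref{assumptions-W}(ii)--(iii) give $\C_W[A,A]\ge c|A|^2$ for symmetric $A$, whence the quadratic energy $\bar{\phi}_0$ of \eqref{linear energy} satisfies $\bar{\phi}_0(u) \ge c\|u\|_{H^1_0}^2 - C\|u\|_{H^1_0}$ by Korn; thus $\bar{\phi}_0$ is convex, coercive, bounded below, and lower semicontinuous with respect to weak $H^1_0$-convergence (so that its sublevel sets are weakly compact). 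In particular $\bar{\phi}_0$ is convex, i.e.\ $0$-convex, along the straight geodesics of $(H^1_0(\Omega),\bar{\mathcal{D}}_0)$.

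With this structure in place, parts~(i) and~(ii) follow from the abstract results collected in Section~\ref{sec3}. At each step of the incremental scheme the functional $v\mapsto \bar{\phi}_0(v) + \frac{1}{2\tau}\bar{\mathcal{D}}_0^2(v,u_{\mathrm{prev}})$ is a strictly convex, coercive quadratic functional, hence admits a \emph{unique} minimizer; this yields unique discrete solutions $\tilde{U}^0_\tau$. Convexity of $\bar{\phi}_0$ lets me invoke the standard existence theorem for curves of maximal slope, giving convergence of $\tilde{U}^0_\tau$ as $\tau\to0$ to a curve $u$ for which the slope $|\partial \bar{\phi}_0|_{\bar{\mathcal{D}}_0}$ is a strong upper gradient and the energy identity holds. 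Since both the discrete solutions and their limit are unique, the \emph{whole} family converges to a single limit, so $MM(\bar{\Phi}_0;u_0)$ is a singleton, proving~(i). Uniqueness of the curve of maximal slope in~(ii) is again a consequence of convexity: such curves coincide with solutions of the gradient flow $u'=-\nabla\bar{\phi}_0(u)$ in the Hilbert space $(H^1_0(\Omega),\langle\cdot,\cdot\rangle_{\bar{\mathcal{D}}_0})$, and monotonicity of the linear subdifferential gives $\frac{\mathrm d}{\mathrm dt}\bar{\mathcal{D}}_0^2(u_1(t),u_2(t))\le 0$, forcing $u_1\equiv u_2$ once $u_1(0)=u_2(0)$.

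For part~(iii) I would identify this Hilbert-space gradient flow with the weak formulation of \eqref{linear equation}. Since $\bar{\phi}_0$ is quadratic, the identity characterizing the curve of maximal slope reads $\langle u'(t),\varphi\rangle_{\bar{\mathcal{D}}_0} = -\mathrm d\bar{\phi}_0(u(t))[\varphi]$ for all $\varphi\in H^1_0(\Omega)$ and a.e.\ $t$. The left-hand side equals $\int_\Omega \C_D\,e(\partial_t u):\nabla\varphi$ and the right-hand side equals $-\int_\Omega \C_W\,e(u):\nabla\varphi + \int_\Omega f\cdot\varphi$ (using symmetry of $\C_W,\C_D$ and that both tensors see only symmetric strains), which is exactly $\int_\Omega(\C_W e(u)+\C_D e(\partial_t u)):\nabla\varphi = \int_\Omega f\cdot\varphi$, the weak form of the linear system, with the regularity $u\in W^{1,2}([0,\infty);H^1_0(\Omega))$ supplied by the energy identity of~(ii). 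The main obstacle --- comparatively mild here --- is entirely contained in the first step: one must verify positive definiteness of $\C_D$ (and of $\C_W$) on symmetric matrices and combine it with Korn's inequality to guarantee that $\bar{\mathcal{D}}_0$ is equivalent to the $H^1_0$-norm, so that the space is genuinely Hilbertian and the abstract convex theory applies. Once this is secured, the sharp convexity of the linearized problem does the remaining work, in marked contrast to Theorem~\ref{maintheorem1}, where frame indifference renders $\mathcal{D}_\delta$ non-convex and the second gradient is needed to recover enough convexity.
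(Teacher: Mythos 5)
Your proposal is correct, and for parts (ii) and (iii) it takes a genuinely different route from the paper. For (i) the two arguments coincide: your observation that $\bar{\mathcal{D}}_0$ is induced by the inner product $\langle u,v\rangle_{\bar{\mathcal{D}}_0}=\int_\Omega \C_D[e(u),e(v)]$, equivalent to the $H^1_0$-norm by Lemma \ref{D-lin}(iii) and Korn's inequality, is exactly Theorem \ref{th: metric space-lin}, and existence/uniqueness of the minimizing movement is obtained in the paper from the abstract convexity result Theorem \ref{th: auxiliary2} together with Lemma \ref{lemma: convexity2} --- note that your phrase ``since both the discrete solutions and their limit are unique, the whole family converges to a single limit'' is not by itself a proof of uniqueness of the MM (uniqueness of discrete minimizers does not a priori force all timestep sequences to converge to the same limit); it is the quantitative convex theory you invoke, i.e.\ Theorem \ref{th: auxiliary2}, that supplies this. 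Where you diverge is in (ii) and (iii): the paper proves uniqueness of the curve of maximal slope by a purely metric strict-convexity argument in the spirit of \cite{Gigli} --- it shows $|u'|^2_{\bar{\mathcal{D}}_0}$ and $|\partial\bar{\phi}_0|^2_{\bar{\mathcal{D}}_0}$ are convex, takes the midpoint curve $u^3=\frac12(u^1+u^2)$ of two putative solutions, and derives a strict inequality in the energy identity \eqref{maximalslope} contradicting the upper gradient property --- and it obtains the PDE in (iii) via the fine slope representation of Lemma \ref{lemma: lin-slope} (with the auxiliary divergence-free operator $\mathcal{L}_0$) plus the Young-inequality argument of Section \ref{sec results}, mirroring the nonlinear case. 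You instead identify curves of maximal slope with solutions of the gradient flow $u'=-\nabla\bar{\phi}_0(u)$ in the Hilbert space $(H^1_0(\Omega),\langle\cdot,\cdot\rangle_{\bar{\mathcal{D}}_0})$ and conclude uniqueness by monotonicity of the (linear) differential, $\frac{\rm d}{{\rm d}t}\bar{\mathcal{D}}_0(u^1,u^2)^2=-2\int_\Omega\C_W[e(u^1-u^2),e(u^1-u^2)]\le 0$; this is legitimate because $\bar{\phi}_0$ is a continuous quadratic functional on this Hilbert space, so the local slope equals the norm of the Riesz gradient and the identification of curves of maximal slope with gradient-flow solutions is classical (\cite[Section 1.4]{AGS} --- the chain rule plus Young's inequality forces $u'=-\nabla\bar{\phi}_0(u)$ a.e.). Be aware that this slope-equals-gradient-norm fact is precisely what Lemma \ref{lemma: lin-slope} establishes by hand (the minimizer $\bar{w}$ there is the Riesz representative of ${\rm d}\bar{\phi}_0(u)$, and $\mathcal{L}_0(u)=\C_D e(\bar w)-\C_W e(u)$), so your route implicitly re-derives that lemma rather than avoiding its content; what it buys is a cleaner, more elementary treatment of (ii) and (iii) tailored to the linear problem, unifying uniqueness and the weak formulation in one Hilbertian picture, at the modest cost of invoking the abstract Hilbert-space theory, whereas the paper's version keeps the argument in the metric framework so that (iii) runs in strict parallel with the nonlinear proof of Theorem \ref{maintheorem1}(iii).
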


In contrast to Theorem \ref{maintheorem1}, we get that the weak solution to \eqref{linear equation} for given initial value $u_0 \in H^1_0(\Omega)$ is uniquely determined and a minimizing movement (and not simply a generalized one). Finally, we study the relation of the solutions to the equations \eqref{nonlinear equation} and \eqref{linear equation}.

\begin{theorem}[Relation between nonlinear and linear problems]\label{maintheorem3}
Fix a null sequence $(\delta_k)_k$ and a sequence of initial data $(y_0^k)_{k\in \N} \subset W^{2,p}_\id(\Omega)$ such that 
$$\sup\nolimits_{k\in\N} \phi_{\delta_k}(y_0^k)<\infty,  \ \ \ \ \delta_k^{-p\alpha}\int_\Omega P(\nabla^2 y_0^k) \to 0, \ \  \ \ \delta_k^{-1}(y^k_0 - \id) \to u_0 \in H_0^1(\Omega).$$ 
 Let $u$ be the unique element of $MM(\bar{\Phi}_0;u_0)$. Then the following holds: 

(i)  (Convergence of discrete solutions) \EEE For all $\tau>0$ and all discrete solutions \MK $\tilde{Y}_\tau^{\delta_k}$ \EEE as in \eqref{ds} below there is a discrete solution \MK  $\tilde{U}^0_\tau$ \EEE for the linearized system such that \MK  $\delta_k^{-1}(\tilde{Y}_\tau^{\delta_k}(t) -\id) \to \tilde{U}^0_\tau(t)$ strongly in $H^1(\Omega)$ for all $t \in [0,\infty)$.  \EEE 

(ii)  (Convergence of continuous solutions) \EEE Each sequence $y_k \in GMM(\Phi_{\delta_k};y_0^k)$, $k \in \N$, satisfies $\delta_k^{-1}(y_k(t) -\id) \to u(t)$ strongly in $H^1(\Omega)$ for all $t \in [0,\infty)$. 

(iii)  (Convergence at specific scales) \EEE For each null sequence $(\tau_k)_k$ and each sequence of discrete solutions \MK $\tilde{Y}_{\tau_k}^{\delta_k}$ as in \eqref{ds} we have $\delta_k^{-1}(\tilde{Y}_{\tau_k}^{\delta_k}(t) -\id) \to u(t)$ strongly in $H^1(\Omega)$ for all $t \in [0,\infty)$. \EEE

\end{theorem}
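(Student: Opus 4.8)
The plan is to deduce all three statements from the abstract stability results for curves of maximal slope and for minimizing movements collected in Section~\ref{sec3} and Section~\ref{sec: auxi-proofs} (in the spirit of \cite{Ortner,S1,S2}), by verifying their hypotheses for the rescaled nonlinear functionals. Throughout I identify a deformation $y\in W^{2,p}_\id(\Omega)$ with its rescaled displacement $u=\delta^{-1}(y-\id)\in H^1_0(\Omega)$ and read every statement through this affine change of variables. The first task is the $\Gamma$-convergence $\phi_{\delta_k}\xrightarrow{\Gamma}\bar\phi_0$ (modulo the state-independent additive constant $-\delta^{-1}\int_\Omega f\cdot\id$, which is irrelevant for the gradient-flow structure) together with the convergence $\mathcal{D}_{\delta_k}\to\bar{\mathcal{D}}_0$ of the dissipation distances, both under the rescaling. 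For the elastic term this is the linearization result à la \cite{DalMasoNegriPercivale:02}: using \eqref{assumptions-W}(i)--(iii), a Taylor expansion of $W$ around $SO(d)$ gives $\delta^{-2}\int_\Omega W(\Id+\delta\nabla u)\to\frac12\int_\Omega\C_W[e(u),e(u)]$, where frame indifference forces the dependence on $e(u)$ alone. The perturbation scales as $\delta^{-p\alpha}\int_\Omega P(\delta\nabla^2 u)\sim\delta^{p(1-\alpha)}\int_\Omega|\nabla^2 u|^p\to0$ by \eqref{assumptions-P}(iii) and $\alpha<1$, so it vanishes in the limit while still providing compactness at each fixed $\delta$; the force term converges to $-\int_\Omega f\cdot u$. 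An analogous expansion of $D^2$ around $(\Id,\Id)$, exploiting $D(F,F)=0$, the symmetry \eqref{eq: assumptions-D}(ii), and the definition \eqref{intro:R} of $\C_D$, yields $\delta^{-2}D^2(\Id+\delta\nabla u_0,\Id+\delta\nabla u_1)\to\C_D[\nabla u_0-\nabla u_1,\nabla u_0-\nabla u_1]$, i.e. $\mathcal D_{\delta_k}\to\bar{\mathcal D}_0$.

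The second ingredient is uniform compactness. From the energy identity \eqref{slopesolution} and $\sup_k\phi_{\delta_k}(y_0^k)<\infty$ one has $\phi_{\delta_k}(y_k(t))\le\phi_{\delta_k}(y_0^k)\le C$ for all $t$; the coercivity \eqref{assumptions-W}(iii), combined with the geometric rigidity estimate used for the $\Gamma$-liminf, bounds $e(u_k(t))$ in $L^2(\Omega)$ uniformly, and Korn's inequality with the zero boundary condition upgrades this to a uniform $H^1_0$-bound, giving weak-$H^1$ precompactness for every $t$. Strong $H^1$ convergence, which the statement asserts, will then follow from \emph{energy convergence}: whenever the liminf inequalities in the abstract theorem become equalities one gets $\phi_{\delta_k}(y_k(t))\to\bar\phi_0(u(t))$ at the relevant times, and since $e(u_k)\weakly e(u)$ together with convergence of the uniformly convex quadratic energies $\frac12\int_\Omega\C_W[e(u_k),e(u_k)]$ forces $e(u_k)\to e(u)$ strongly in $L^2$, hence $u_k\to u$ strongly in $H^1$ by Korn.

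With these preparations, part (ii) is an application of the stability theorem for curves of maximal slope: the $y_k$ are curves of maximal slope for $\phi_{\delta_k}$ by Theorem~\ref{maintheorem1}(ii), the data are well prepared by the three hypotheses on $(y_0^k)$ (in particular $\delta_k^{-p\alpha}\int_\Omega P(\nabla^2 y_0^k)\to0$ forces $\phi_{\delta_k}(y_0^k)\to\bar\phi_0(u_0)$), and feeding in the $\Gamma$-convergence, the compactness, and the liminf inequalities for slopes and metric derivatives produces a limit curve of maximal slope for $\bar\phi_0$, which by the uniqueness in Theorem~\ref{maintheorem2} coincides with $u$. Part (i) is proved by induction over the time steps of the incremental scheme \eqref{incremental}: at fixed $\tau$ the functionals $y\mapsto\Phi_{\delta_k}(\cdot\,;y_n^k)$ $\Gamma$-converge, under the rescaling, to $\bar\Phi_0(\cdot\,;u_n)$, so minimizers converge to minimizers by the fundamental theorem of $\Gamma$-convergence; strict convexity of the linear incremental problem gives a unique limit $\tilde U^0_\tau$, so the whole rescaled sequence converges, strongly in $H^1$ by the energy-convergence argument. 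Part (iii) is the diagonal statement and invokes the minimizing-movement version of the stability result of Section~\ref{sec: auxi-proofs} with $\delta_k\to0$ and $\tau_k\to0$ simultaneously: the discrete solutions $\tilde Y^{\delta_k}_{\tau_k}$ as in \eqref{ds} satisfy a discrete energy--dissipation inequality, and passing to the joint limit with the same three ingredients identifies the limit as a curve of maximal slope for $\bar\phi_0$, again equal to $u$ by uniqueness.

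The main obstacle is the lower semicontinuity of the metric slope along the rescaling, namely $\liminf_k|\partial\phi_{\delta_k}|^2_{\mathcal D_{\delta_k}}(y_k)\ge|\partial\bar\phi_0|^2_{\bar{\mathcal D}_0}(u)$ whenever $u_k\to u$. This is delicate because, as noted in Section~\ref{sec:energy-dissipation}, the distances $\mathcal D_\delta$ are genuinely non-convex due to frame indifference \eqref{eq: assumptions-D}(v), so the slope cannot be estimated by naive convexity; the resolution is to use the sufficiently good convexity recovered through the higher-order term and the attendant $W^{2,p}$-control, which are precisely the properties established in Section~\ref{sec:energy-dissipation} that render the abstract theory applicable. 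A secondary difficulty is to promote weak to strong $H^1$ convergence \emph{at every} $t\in[0,\infty)$ rather than almost everywhere, which requires propagating the pointwise energy convergence to all times via the monotonicity of $t\mapsto\phi_{\delta_k}(y_k(t))$ and lower semicontinuity.
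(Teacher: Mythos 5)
Your proposal follows essentially the same route as the paper: part (i) by induction over the time steps using the $\Gamma$-convergence of the rescaled incremental functionals (Theorem~\ref{th: Gamma}(ii)) together with the weak-to-strong upgrade of Lemma~\ref{lemma: energy}, and parts (ii),(iii) by verifying \eqref{compatibility}, \eqref{basic assumptions2}, and \eqref{eq: implication} (the latter via the slope lower semicontinuity of Lemma~\ref{lemma: lsc-slope}, which you correctly single out as the delicate point) and then invoking the abstract stability results of Theorem~\ref{th:abstract convergence 1} and Theorem~\ref{th:abstract convergence 2} combined with the uniqueness from Theorem~\ref{maintheorem2}. This matches the paper's proof in structure and in all key ingredients, including the identification of the limit by uniqueness and the strong $H^1$ convergence via pointwise energy convergence.
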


We remark that, in  the formulation of \BBB \cite{Braides, BCGS}, \EEE property (iii) states that  the configuration $u$ is a minimizing movement along $\phi_{\delta_k}$ at scale $\tau_k$.  Let us emphasize that the converge in Theorem \ref{maintheorem3} is with respect to the strong $H^1(\Omega)$-topology. From now on we set $f \equiv 0 $ for convenience. The general case indeed follows with minor modifications, which are standard.\EEE

\vspace{0.3cm}

\begin{figure}[H]
\centering
\begin{tikzpicture}
  \matrix (m) [matrix of math nodes,row sep=8em,column sep=8em,minimum width=6em]
  {
    \delta_k^{-1}(\tilde{Y}_{\tau_n}^{\delta_k}(t) -\id)     & \delta_k^{-1}(y_k -\id) \\
     \tilde{U}^0_{\tau_n} & u \\};
  \path[-stealth]
    (m-1-1) edge node [left] {$k \to \infty$} (m-2-1)
            edge   node [below] {$n \to \infty$} (m-1-2)
    (m-2-1) edge node [below] {$n \to \infty$}               (m-2-2)
                (m-1-2) edge node [right] {$k \to \infty$} (m-2-2) 
                (m-1-1)      
            edge node [below] {\hspace{-1.5cm}$n,k\to \infty$} (m-2-2)  ;
\end{tikzpicture}

\caption{ Illustration of the commutativity result given in Theorem \ref{maintheorem1}-Theorem \ref{maintheorem3}. The horizontal arrows are addressed in  Theorem \ref{maintheorem1} and Theorem \ref{maintheorem2}, respectively. For the vertical and diagonal arrows we refer to Theorem \ref{maintheorem3}.\EEE }\label{diagram}
 
\end{figure}

%
%
%
%

\section{Preliminaries: Generalized minimizing movements and curves of maximal slope}\label{sec3}

  In this section we first recall the relevant definitions and also give a convergence result for discrete solutions to  curves of maximal slope proved in \cite{AGS}. In Section \ref{sec: auxi-proofs} we then present a result about the limit of sequences of curves of maximal slope being a variant of results presented in \cite{CG, S2}. 

\subsection{Definitions}\label{sec: defs}

We consider a   complete metric space $(\mathscr{S},\mathcal{D})$. We say a curve $u: (a,b) \to \mathscr{S}$ is \emph{absolutely continuous} with respect to $\mathcal{D}$ if there exists $m \in L^1(a,b)$ such that
\begin{align}\label{metric-deriv}
\mathcal{D}(u(s),u(t)) \le \int_s^t m(r) \, dr \ \ \   \text{for all} \ a \le s \le t \le b.
\end{align}
The smallest function $m$ with this property, denoted by $|u'|_{\mathcal{D}}$, is called \emph{metric derivative} of  $u$  and satisfies  for a.e. $t \in (a,b)$   (see \cite[Theorem 1.1.2]{AGS} for the existence proof)
$$|u'|_{\mathcal{D}}(t) := \lim_{s \to t} \frac{\mathcal{D}(u(s),u(t))}{|s-t|}.$$
We now define the notion of a \emph{curve of maximal slope}. We only give the basic definition here and refer to \cite[Section 1.2, 1.3]{AGS} for motivations and more details.  By  $h^+:=\max(h,0)$ we denote the positive part of a function  $h$.

\begin{definition}[Upper gradients, slopes, curves of maximal slope]\label{main def2} 
 We consider a   complete metric space $(\mathscr{S},\mathcal{D})$ with a functional $\phi: \mathscr{S} \to (-\infty,+\infty]$.

(i) A function $g: \mathscr{S} \to [0,\infty]$ is called a strong upper gradient for $\phi$ if for every absolutely continuous curve $v: (a,b) \to \mathscr{S}$ the function $g \circ v$ is Borel and 
$$|\phi(v(t)) - \phi(v(s))| \le \int_s^t g(v(r)) |v'|_{\mathcal{D}}(r)\,dr \  \ \  \text{for all} \ a< s \le t < b.$$

(ii) For each $u \in \mathscr{S}$ the local slope of $\phi$ at $u$ is defined by 
$$|\partial \phi|_{\mathcal{D}}(u): = \limsup_{w \to u} \frac{(\phi(u) - \phi(w))^+}{\mathcal{D}(u,w)}.$$

(iii) An absolutely continuous curve $u: (a,b) \to \mathscr{S}$ is called a curve of maximal slope for $\phi$ with respect to the strong upper gradient $g$ if for a.e. $t \in (a,b)$
$$\frac{\rm d}{ {\rm d} t} \phi(u(t)) \le - \frac{1}{2}|u'|^2_{\mathcal{D}}(t) - \frac{1}{2}g^2(u(t)).$$
\end{definition}

 We now introduce minimizing movements. In the following we will use an approximation scheme solving suitable time-incremental minimization problems: Consider a fixed time step $\tau >0$ and suppose that an initial datum $U^0_\tau$ is given. Whenever, $U_\tau^0, \ldots, U^{n-1}_\tau$ are known, $U^n_\tau$ is defined as (if existent)
\begin{align}\label{incremental}
U_\tau^n = {\rm argmin}_{v \in \mathcal{S}} \Phi(\tau,U^{n-1}_\tau; v), \ \ \ \Phi(\tau,u; v):=  \frac{1}{2\tau} \mathcal{D}(v,u)^2 + \phi(v). 
\end{align}
Supposing that for a choice of $\tau$ a sequence $(U_\tau^n)_{n \in \N}$ solving  \eqref{incremental} exists, we define the  piecewise constant interpolation by
\begin{align}\label{ds}
\MK \tilde{U}_\tau(0) = U^0_\tau, \ \ \ \tilde{U}_\tau(t) = U^n_\tau  \ \text{for} \ t \in ( (n-1)\tau,n\tau], \ n\ge 1. \EEE
\end{align}
In the following, \MK $\tilde{U}_\tau$ \EEE will be called a \emph{discrete solution}. Note that the existence of discrete solutions is usually guaranteed by the direct method of the calculus of variations under suitable compactness, coercivity, and lower semicontinuity assumptions.  Finally, we introduce the \emph{modulus of the derivative}  
\begin{align*}
|{\tilde U'_{\tau}}|_{\mathcal{D}}(t) = \frac{\mathcal{D}(U_{\tau}^n, U_{\tau}^{n-1})}{\tau} \ \text{ for } t \in ( (n-1)\tau, n\tau], \ n\ge 1.
\end{align*}
\begin{definition}[Minimizing movements]\label{main def1}
(i) We say a curve $u: [0,\infty) \to \mathscr{S}$ is a  minimizing movement for $\Phi$ as defined in \eqref{incremental}, starting from the initial datum $u_0 \in \mathscr{S}$, if for every  sequence of timesteps $(\tau_k)_k$ with $\tau_k \to 0$ there exist discrete solutions defined in \eqref{ds} such that
\begin{align}\label{MM}
\begin{split}
&\lim_{k\to \infty} \phi(U^0_{\tau_k}) = \phi(u_0), \ \ \ \ \limsup\nolimits_{k \to \infty} \mathcal{D}(U^0_{\tau_k},u_0) < \infty, \\& \lim_{k\to \infty} \MK \mathcal{D}(\tilde{U}_{\tau_k}(t),u(t)) = 0 \ \ \  \forall t \in [0,\infty). \EEE
\end{split}
\end{align} 
By $MM(\Phi;u_0)$ we denote the collection of all minimizing movements for $\Phi$ starting from $u_0$.

(ii) Likewise,  we say a curve $u: [0,\infty) \to \mathscr{S}$ is a generalized  minimizing movement for $\Phi$ starting from  $u_0 \in \mathscr{S}$ if there exists a sequence of timesteps $(\tau_k)_k$ with $\tau_k \to 0$ and corresponding discrete solutions such that \eqref{MM} holds. The collection of all such curves is denoted by $GMM(\Phi;u_0)$.
\end{definition}

\subsection{Compactness of discrete solutions and convergence to curves of maximal slope}\label{sec: AGS-results}

Suppose again that $(\mathscr{S},\mathcal{D})$ is a complete metric space. As discussed in \cite[Remark 2.0.5]{AGS}, it is convenient to introduce a weaker topology on $\mathscr{S}$ to have more flexibility in the derivation of compactness properties. Assume that there is a Hausdorff topology $\sigma$ on $\mathscr{S}$, which is compatible with $\mathcal{D}$ in the sense that $\sigma$ is weaker than the topology induced by $\mathcal{D}$ and satisfies
\begin{align}\label{compatibility2}
u_n \stackrel{\sigma}{\to} u, \ \  v_n \stackrel{\sigma}{\to} v \ \ \ \Rightarrow \ \ \ \liminf_{n \to \infty} \mathcal{D}(u_n,v_n) \ge  \mathcal{D}(u,v).
\end{align}
Consider a functional $\phi: \mathscr{S} \to [0,+\infty)$ with the following properties:
\begin{align}\label{basic assumptions}
\begin{split}
(i) & \ \ \text{$u_n \stackrel{\sigma}{\to} u$, \ \  $\sup\nolimits_{n,m}\mathcal{D}(u_n,u_m)< \infty \  \ \Rightarrow \ \  \liminf_{n \to \infty}\phi(u_n) \ge \phi(u)$,} \\ 
(ii)& \ \ \text{for all $N \in\N$ there is a $\sigma$-sequentially compact set $K_N$ such that} \\
& \ \ \text{$\lbrace u \in \mathscr{S}: \phi(u) + \mathcal{D}(u,u_*) \le N \rbrace \subset K_N$ for some point $u_* \in \mathscr{S}$.}
\end{split}
\end{align}
Note that nonnegativity of $\phi$ can be generalized to a suitable \emph{coerciveness} condition, see \cite[(2.1.2b)]{AGS}, which we do not include here for the sake of simplicity. From \cite[Proposition 2.2.3, Theorem 2.3.3, Remark 2.3.4(i)]{AGS} we obtain the following compactness and convergence result.

\begin{theorem}\label{th: auxiliary1}
Suppose that $\phi$ satisfies \eqref{basic assumptions} and $v \in \mathscr{S} \mapsto |\partial \phi|_{\mathcal D}(v)$ is a strong upper gradient for $\phi$ and $\sigma$-lower semicontinuous. Then the following holds:

(i) Suppose that there is a sequence of initial data $(U^0_{\tau_k})_{k \in \N}$ and $u_0 \in \mathscr{S}$ with $\sup_k \mathcal{D}(U^0_{\tau_k},u_0)<+\infty$, $U^0_{\tau_k} \stackrel{\sigma}{\to} u_0$, and $\phi(U^0_{\tau_k}) \to \phi(u_0)$. Then there is an absolutely continuous curve $u:[0,\infty) \to \mathscr{S}$ and a  subsequence, \MK not relabeled, \EEE of $(\tau_k)_{k \in \N}$ such that a sequence of discrete solutions \MK $(\tilde{U}_{\tau_k})_{k \in \N}$ \EEE defined in  \eqref{ds} satisfies \MK $\tilde{U}_{\tau_k}(t) \stackrel{\sigma}{\to} u(t)$ for all $t \in [0,\infty)$.\EEE

(ii) Every $u \in GMM(\Phi;u_0)$ for each $u_0 \in \mathscr{S}$ is a curve of maximal slope for $\phi$ with respect to $|\partial \phi|_{\mathcal{D}}$ and in particular $u$ satisfies the energy identity 
\begin{align}\label{maximalslope}
\frac{1}{2} \int_0^T |u'|_{\mathcal{D}}^2(t) \, dt + \frac{1}{2} \int_0^T |\partial \phi|_{\mathcal{D}}^2(u(t)) \, dt + \phi(u(T)) = \phi(u_0) \ \  \forall T>0. 
\end{align}
Moreover, for a sequence of discrete solutions \MK $(\tilde{U}_{\tau_k})_{k \in \N}$ \EEE as in (i)  we have
\begin{align*}
\begin{split}
&\lim_{k \to \infty} \MK \phi(\tilde{U}_{\tau_k}(t)) = \phi(u(t))\EEE \ \ \ \forall t \in [0,\infty),\\
& \lim_{k \to \infty} |\partial \phi|_{\mathcal{D}}({\tilde U_{\tau_k}}) = |\partial \phi|_{\mathcal{D}}(u)\ \ \text{in} \ \ L^2_{\rm loc}([0,\infty)),\\
&   \lim_{k \to \infty} |{\tilde U'_{\tau_k}}|_{\mathcal{D}} = |u'|_{\mathcal{D}} \ \ \text{in} \ \ L^2_{\rm loc}([0,\infty)).
\end{split}
\end{align*}
\end{theorem}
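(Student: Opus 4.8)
The plan is to recognize that the stated hypotheses place us exactly in the abstract setting of \cite[Chapters 2--3]{AGS}, so that the statement is obtained by verifying the standing assumptions and then invoking the cited results; the only genuine work is to produce the a priori bounds that activate the compactness assumption \eqref{basic assumptions}(ii) and to run the De Giorgi interpolation argument that upgrades the discrete energy inequality to the energy identity \eqref{maximalslope}. First I would establish the discrete energy estimate: by the minimality defining $U^n_\tau$ in \eqref{incremental}, testing with the competitor $v = U^{n-1}_\tau$ yields $\frac{1}{2\tau}\mathcal{D}(U^n_\tau,U^{n-1}_\tau)^2 + \phi(U^n_\tau)\le\phi(U^{n-1}_\tau)$, and summation over $n$ gives
\[ \tfrac12\sum_{n=1}^{N}\frac{\mathcal{D}(U^n_\tau,U^{n-1}_\tau)^2}{\tau} + \phi(U^N_\tau)\le\phi(U^0_\tau). \]
Since $\phi\ge 0$ and $\phi(U^0_{\tau_k})\to\phi(u_0)$, this bounds $\phi(\tilde U_{\tau_k}(t))$ uniformly in $t$ and $k$ and controls the discrete modulus of the derivative in $L^2$, so that $\mathcal{D}(\tilde U_{\tau_k}(s),\tilde U_{\tau_k}(t))$ is equicontinuous up to an error of order $\sqrt{\tau_k}$ by Cauchy--Schwarz. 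Existence of the minimizers $U^n_\tau$ themselves is guaranteed by the direct method, using that sublevel sets are $\sigma$-sequentially compact by \eqref{basic assumptions}(ii) and that $v\mapsto\Phi(\tau,U^{n-1}_\tau;v)$ is $\sigma$-lower semicontinuous by \eqref{basic assumptions}(i) together with the compatibility \eqref{compatibility2}.

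For part (i), the uniform energy bound combined with \eqref{basic assumptions}(ii) confines all discrete values at a fixed time to one $\sigma$-sequentially compact set. A refined Arzel\`a--Ascoli argument for the $\sigma$-topology then extracts a (not relabeled) subsequence and a limit curve $u:[0,\infty)\to\mathscr{S}$ with $\tilde U_{\tau_k}(t)\stackrel{\sigma}{\to}u(t)$ for every $t$; the $L^2$ control of the discrete modulus of the derivative, passed to the limit via \eqref{compatibility2}, shows that $u$ satisfies \eqref{metric-deriv} and is therefore absolutely continuous with respect to $\mathcal{D}$. This is precisely the content of \cite[Proposition 2.2.3]{AGS}.

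For part (ii), I would pass to the $\liminf$ in the discrete energy inequality, using $\sigma$-lower semicontinuity of $\phi$ from \eqref{basic assumptions}(i), $\sigma$-lower semicontinuity of the slope $|\partial\phi|_{\mathcal{D}}$ (hypothesis), and lower semicontinuity of the metric-derivative term through \eqref{compatibility2}, to obtain
\[ \tfrac12\int_0^T|u'|_{\mathcal{D}}^2 + \tfrac12\int_0^T|\partial\phi|_{\mathcal{D}}^2(u) + \phi(u(T))\le\phi(u_0). \]
The opposite inequality is free: since $|\partial\phi|_{\mathcal{D}}$ is a strong upper gradient, Definition \ref{main def2}(i) applied to the absolutely continuous curve $u$, together with Young's inequality, gives $\phi(u_0)-\phi(u(T))\le\int_0^T|\partial\phi|_{\mathcal{D}}(u)\,|u'|_{\mathcal{D}}\le\tfrac12\int_0^T|\partial\phi|_{\mathcal{D}}^2(u)+\tfrac12\int_0^T|u'|_{\mathcal{D}}^2$. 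Comparing the two bounds forces the energy identity \eqref{maximalslope} for every $T>0$, and differentiating it yields the differential inequality of Definition \ref{main def2}(iii), so $u$ is a curve of maximal slope. This reproduces \cite[Theorem 2.3.3]{AGS}.

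The strengthened convergences then follow from the equality case, exactly as in \cite[Remark 2.3.4(i)]{AGS}: since the limiting identity matches the limit of the discrete inequalities, each lower-semicontinuity inequality used above must become an equality in the limit, whence $\phi(\tilde U_{\tau_k}(t))\to\phi(u(t))$ pointwise and both $|\partial\phi|_{\mathcal{D}}(\tilde U_{\tau_k})$ and $|\tilde U'_{\tau_k}|_{\mathcal{D}}$ converge in $L^2_{\mathrm{loc}}([0,\infty))$ to the corresponding quantities of $u$. The main obstacle is the $\liminf$ passage in the slope term, $\liminf_k\int_0^T|\partial\phi|_{\mathcal{D}}^2(\tilde U_{\tau_k})\ge\int_0^T|\partial\phi|_{\mathcal{D}}^2(u)$: it cannot be read off directly from the piecewise constant interpolants but requires the De Giorgi variational interpolant, which bounds the slope along the discrete scheme from below, combined with Fatou's lemma and precisely the $\sigma$-lower semicontinuity hypothesis on the slope. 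Everything else amounts to bookkeeping around the energy inequality.
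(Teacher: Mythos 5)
Your proposal is correct and takes the same route as the paper, which offers no proof of its own here but simply invokes \cite[Proposition 2.2.3, Theorem 2.3.3, Remark 2.3.4(i)]{AGS}; your argument is a faithful reconstruction of exactly those proofs (discrete energy estimate from minimality, refined Arzel\`a--Ascoli in the $\sigma$-topology, De Giorgi's variational interpolant to handle the slope term, and the strong-upper-gradient/Young comparison forcing equality in \eqref{maximalslope}). The one cosmetic slip is the remark that the variational interpolant ``bounds the slope from below'': it bounds the slope from \emph{above} by the difference quotient entering De Giorgi's discrete energy identity, which---combined with the $\sigma$-lower semicontinuity of $|\partial\phi|_{\mathcal{D}}$ and Fatou---is precisely what makes the $\liminf$ passage for $\int_0^T |\partial\phi|^2_{\mathcal{D}}(\tilde U_{\tau_k})$ work, as you intend.
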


In particular, Theorem \ref{th: auxiliary1}(i) states that the limit $u$ is a generalized minimizing movement, provided that $\sigma$ coincides with the topology induced by $\mathcal{D}$. We remark that $GMM(\Phi;u_0)$ could also be defined with respect to the weaker topology $\sigma$, see \cite[Definition 2.0.6]{AGS}. For our purposes, however, a definition in terms of $\mathcal{D}$ is more convenient. 

The result can be considerably improved if $\Phi$ satisfies suitable convexity properties (see \cite[Theorem 4.0.4 and Theorem 4.0.7]{AGS}).

\begin{theorem}\label{th: auxiliary2}
Suppose that $\phi$ is $\mathcal{D}$-lower semicontinuous and $\phi \ge 0$. Moreover, assume that for all $\tau>0$  and for all $w,v_0,v_1 \in \mathscr{S}$ there exists a curve $(\gamma_t)_{t \in [0,1]} \subset \mathscr{S}$ with $\gamma_0 = v_0$ and $\gamma_1 = v_1$ such that
$$\Phi(\tau,w;\gamma_t) \le (1-t)\Phi(\tau,w;   v_0  ) + t\Phi(\tau,w;  v_1  ) - \frac{t(1-t)}{2\tau} \mathcal{D}(v_0,v_1)^2 \ \ \ \forall t \in [0,1].$$
Then for each $u_0 \in \mathscr{S}$ there exists a unique $u \in MM(\Phi;u_0)$. Moreover, the assertion of Theorem \ref{th: auxiliary1} (with $\sigma$ being the topology induced by $\mathcal{D}$) holds and for a discrete solution  $\tilde{U}_{\tau}$ with $U^0_\tau = u_0$ we have 
 $\mathcal{D}(\tilde{U}_\tau(t),u(t))^2 \le \frac{1}{2}\tau^2|\partial \phi|_{\mathcal{D}}^2(u_0)$ for all $t>0$.
\end{theorem}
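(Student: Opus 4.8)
The plan is to extract everything from the strong convexity hypothesis, which plays the role of a $1/\tau$-convexity of the Moreau--Yosida functional $v \mapsto \Phi(\tau,w;v)$. First I would observe that this convexity forces strict convexity, and hence uniqueness of incremental minimizers: if $v_0 \neq v_1$ both minimized $\Phi(\tau,w;\cdot)$ with common value $m$, then evaluating the hypothesis along the connecting curve $\gamma_t$ would give $\Phi(\tau,w;\gamma_t) \le m - \frac{t(1-t)}{2\tau}\mathcal{D}(v_0,v_1)^2 < m$ for $t \in (0,1)$, contradicting minimality. Thus, once $U^0_\tau$ is fixed, the whole discrete solution $\tilde{U}_\tau$ in \eqref{incremental}--\eqref{ds} is uniquely determined by $\tau$; existence of the minimizers and the compactness needed for the scheme are provided exactly as in Theorem~\ref{th: auxiliary1}, while the convexity additionally guarantees (cf.~\cite{AGS}) that the local slope $|\partial\phi|_{\mathcal{D}}$ is a $\mathcal{D}$-lower semicontinuous strong upper gradient, so that the hypotheses of Theorem~\ref{th: auxiliary1} are met with $\sigma$ the topology induced by $\mathcal{D}$.

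Second I would derive the discrete variational (evolution) inequality that drives the whole statement. Writing $u_\tau := {\rm argmin}_v \Phi(\tau,w;v)$, choosing $v_0 = u_\tau$ and $v_1 = v$ for an arbitrary competitor $v$, using minimality $\Phi(\tau,w;u_\tau) \le \Phi(\tau,w;\gamma_t)$ in the convexity inequality, dividing by $t$ and letting $t \to 0^+$ yields
\[
\frac{1}{2\tau}\mathcal{D}(u_\tau,v)^2 + \frac{1}{2\tau}\mathcal{D}(u_\tau,w)^2 + \phi(u_\tau) \le \frac{1}{2\tau}\mathcal{D}(v,w)^2 + \phi(v).
\]
Taking $v = w = U^{n-1}_\tau$ already gives the energy-decay and discrete-speed bound $\tau^{-1}\mathcal{D}(U^n_\tau, U^{n-1}_\tau)^2 + \phi(U^n_\tau) \le \phi(U^{n-1}_\tau)$, while the inequality with general $v$ is the discrete counterpart of $\frac{\rm d}{{\rm d}t}\frac{1}{2}\mathcal{D}(u(t),v)^2 \le \phi(v) - \phi(u(t))$ and is the single tool from which the remaining assertions follow.

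Third I would upgrade subsequential convergence to convergence of the full family to a single limit. Applying the discrete variational inequality to two discrete solutions with steps $\tau$ and $\tau'$, each tested against the other's current value, and summing the resulting telescoping terms produces a quantitative Cauchy estimate for $\mathcal{D}(\tilde{U}_\tau(t), \tilde{U}_{\tau'}(t))$, uniform on compact time intervals; a discrete Gronwall argument (with $\lambda = 0$) then shows that $\tilde{U}_\tau(t)$ is Cauchy as $\tau \to 0$ and converges to a limit $u$ independent of the sequence $(\tau_k)_k$. This is precisely the passage from GMM to MM, and uniqueness of $u \in MM(\Phi;u_0)$ follows since every minimizing movement must be the limit of the now unique discrete solutions.

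Finally, the quantitative estimate $\mathcal{D}(\tilde{U}_\tau(t),u(t))^2 \le \frac{1}{2}\tau^2 |\partial\phi|_{\mathcal{D}}^2(u_0)$ follows from the regularizing effect of the scheme: iterating the discrete variational inequality shows that the slope is nonincreasing along the discrete trajectory, so $\mathcal{D}(U^n_\tau, U^{n-1}_\tau) \le \tau |\partial\phi|_{\mathcal{D}}(U^{n-1}_\tau) \le \tau|\partial\phi|_{\mathcal{D}}(u_0)$; feeding this bound into the contraction estimate of the previous step and letting $\tau' \to 0$ yields the stated rate. \textbf{The main obstacle} is exactly this last pair of steps: merely invoking Theorem~\ref{th: auxiliary1} gives only subsequential convergence to \emph{some} generalized minimizing movement, and it is the strong $1/\tau$-convexity --- through the discrete variational inequality and the careful telescoping it allows --- that must be exploited to obtain the genuine contraction and stability estimates, the uniqueness of the limit, and the sharp linear-in-$\tau$ error bound.
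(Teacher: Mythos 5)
Your outline is, in substance, a reconstruction of the proof the paper points to: the paper does not prove Theorem \ref{th: auxiliary2} itself but quotes it from \cite[Theorem 4.0.4 and Theorem 4.0.7]{AGS}, and your four steps (strict convexity forces unique incremental minimizers; differentiating the convexity inequality at $t=0^+$ gives the discrete variational inequality; comparison of two discrete solutions gives Cauchyness, hence a unique minimizing movement; slope estimates give the error bound) are exactly the architecture of that convexity theory. So the route is the right one, but one step fails as stated. You claim that existence of the incremental minimizers and ``the compactness needed for the scheme'' are provided exactly as in Theorem \ref{th: auxiliary1}. Theorem \ref{th: auxiliary2} does \emph{not} assume the compactness hypothesis \eqref{basic assumptions}(ii), and the paper explicitly remarks right after the statement that this hypothesis is unnecessary here because solvability of \eqref{incremental} follows from a convexity argument. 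The repair stays inside your own scheme: apply the hypothesized inequality at $t=\frac12$ with $v_0=v_n$, $v_1=v_m$ two members of a minimizing sequence; since $\Phi(\tau,w;\gamma_{1/2})\ge \inf_v \Phi(\tau,w;v) > -\infty$ (as $\phi\ge 0$), one obtains $\frac{1}{8\tau}\mathcal{D}(v_n,v_m)^2 \le \frac12 \Phi(\tau,w;v_n)+\frac12\Phi(\tau,w;v_m)-\inf\Phi \to 0$, so minimizing sequences are Cauchy, and completeness together with the $\mathcal{D}$-lower semicontinuity of $\phi$ yields a minimizer. The same caveat applies to invoking Theorem \ref{th: auxiliary1} wholesale: its hypotheses are not all available, and in the cited proof the convergence of the scheme comes from the contraction estimates of your third step, not from compactness.

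Two further points are glossed where the real work lies. First, the sharp uniform-in-time bound $\mathcal{D}(\tilde{U}_\tau(t),u(t))^2 \le \frac12 \tau^2 |\partial\phi|^2_{\mathcal{D}}(u_0)$ does not drop out of ``feeding'' the stepwise bound $\mathcal{D}(U^n_\tau,U^{n-1}_\tau)\le \tau|\partial\phi|_{\mathcal{D}}(U^{n-1}_\tau)$ into a Gronwall-type Cauchy estimate: naive accumulation over $t/\tau$ steps degrades the constant, and in \cite{AGS} the optimal estimate is obtained via De Giorgi's variational interpolation and refined a priori estimates. Moreover, that stepwise bound and the monotonicity of the slope along the scheme are not consequences of the displayed discrete variational inequality alone; they rely on the convexity-based representation of the slope (local slope equals a global, supremum-type slope), which you use only implicitly when asserting that $|\partial\phi|_{\mathcal{D}}$ is a lower semicontinuous strong upper gradient. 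Second, uniqueness of $u\in MM(\Phi;u_0)$ is slightly subtler than ``the discrete solutions are now unique'': in Definition \ref{main def1} the initial data $U^0_{\tau_k}$ may vary with $k$, so identifying every minimizing movement with the limit of the scheme started at $u_0$ requires a stability estimate in the initial datum, which again comes from the contraction property furnished by the convexity. None of this changes the verdict that your approach is the intended one; these are the places where the sketch must be made quantitative for the constant $\frac12\tau^2$ and the uniqueness claim to be honest.
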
 
Note that in contrast to Theorem \ref{th: auxiliary1}, Theorem \ref{th: auxiliary2} yields also a  uniqueness result for minimizing movements. Observe that \eqref{basic assumptions}(ii) is not necessary for Theorem \ref{th: auxiliary2} since the solvability of the problem   ${\rm argmin}_{v \in \mathscr{S}} \Phi(\tau,u; v)$ for $\tau>0$ and $u \in \mathscr{S}$ (cf. \eqref{incremental}) follows from a convexity argument.  In this setting, much more refined results can be established and we refer to \cite[Section 4]{AGS} for more details.

\subsection{Limits of curves of maximal slopes}\label{sec: auxi-proofs}

We now consider a set $\mathscr{S}$ and  a sequence of  metrics $(\mathcal{D}_n)_n$ on $\mathscr{S}$ as well as a limiting metric $\mathcal{D}$. We again assume that all metric spaces are complete. Moreover, let $(\phi_n)_n$ be a sequence of functionals with $\phi_n: \mathscr{S} \to [0,\infty]$.  Suppose that there is a Hausdorff topology $\sigma$ on $\mathscr{S}$  which is weaker than the topology induced by each $\mathcal{D}_n,\mathcal{D}$ and satisfies similarly to \eqref{compatibility2}
\begin{align}\label{compatibility}
\begin{split}
u_n \stackrel{\sigma}{\to} u, &\ \  v_n \stackrel{\sigma}{\to} v  \ \ \  \Rightarrow \ \ \ \liminf_{n \to \infty} \mathcal{D}_n(u_n,v_n) \ge  \mathcal{D}(u,v).
\end{split}
\end{align}
Moreover, assume that $(\phi_n)_n $ satisfy \eqref{basic assumptions}(ii), i.e., for all $N \in\N$ there is a $\sigma$-sequentially compact set $K_N$  and  $u_* \in \mathscr{S}$ such that for all $n \in \N$
\begin{align}\label{basic assumptions2}
\lbrace u \in \mathscr{S}: \phi_n(u) + \mathcal{D}_n(u,u_*) \le N \rbrace \subset K_N.
\end{align}
To ensure the existence of limiting curves of maximal slope, we will apply the following refined version of the Arzel\`{a} Ascoli theorem.

\begin{theorem}\label{th: auxiliary3}
Let $T>0$, let metrics $\mathcal{D}_n$, $\mathcal{D}$ and functionals $(\phi_n)_n$ be given such that  \eqref{compatibility} holds with respect to the topology $\sigma$. Let $K \subset \mathscr{S}$ be a $\sigma$-sequentially compact set. Let  $u_n:[0,T]\to \mathscr{S}$ be curves such that  
\begin{align*}
u_n(t)  \in K \ \forall n \in \N, t \in [0,T], \ \ \ \  \limsup_{n \to \infty}\mathcal{D}_n(u_n(s),u_n(t)) \le \omega(s,t) \  \ \ \forall s,t \in [0,T]
\end{align*}
for a symmetric function $\omega: [0,T]^2 \to [0,\infty)$ with
$$\lim_{(s,t) \to (r,r)} \omega(s,t) = 0 \ \ \ \ \forall r\in [0,T] \setminus \mathscr{C}, $$
where $\mathscr{C}$ is an at most countable subset of $[0,T]$. Then there exists a (not relabeled) subsequence and a limiting curve $u:[0,T] \to \mathscr{S}$ such that
$$u_n(t) \stackrel{\sigma}{\to} u(t) \ \  \ \forall t\in [0,T], \  \ \ u \text{ is $\mathcal{D}$-continuous in $[0,T] \setminus \mathscr{C}$.}  $$

\end{theorem}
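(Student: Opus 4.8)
The plan is to run a refined Arzel\`a--Ascoli argument resting on three ingredients: a diagonal extraction on a countable dense set of times, completeness of $(\mathscr{S},\mathcal{D})$, and the compatibility inequality \eqref{compatibility}. First I would fix a countable dense set $D\subset[0,T]$ containing $\mathscr{C}$ and the endpoints, say $D=(\mathbb{Q}\cap[0,T])\cup\mathscr{C}\cup\{0,T\}$. Since $u_n(t)\in K$ for every $t$ and $K$ is $\sigma$-sequentially compact, enumerating $D$ and diagonalizing yields a subsequence (not relabeled) along which $u_n(t)\stackrel{\sigma}{\to}u(t)$ for all $t\in D$, with $u(t)\in K$. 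Compatibility \eqref{compatibility} then upgrades the hypothesis on $\omega$ to a bound on the limit: for all $s,t\in D$,
$$\mathcal{D}(u(s),u(t))\le\liminf_{n\to\infty}\mathcal{D}_n(u_n(s),u_n(t))\le\limsup_{n\to\infty}\mathcal{D}_n(u_n(s),u_n(t))\le\omega(s,t).$$

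Next I would extend $u$ to all of $[0,T]$. For $t\in[0,T]\setminus D$ (so in particular $t\notin\mathscr{C}$) I choose $s_k\in D$ with $s_k\to t$; the estimate above gives $\mathcal{D}(u(s_k),u(s_l))\le\omega(s_k,s_l)$, and since $t\notin\mathscr{C}$ the right-hand side tends to $0$ as $k,l\to\infty$. Thus $(u(s_k))_k$ is $\mathcal{D}$-Cauchy and, by completeness, $\mathcal{D}$-converges to some $u(t)\in\mathscr{S}$; an interlacing argument shows $u(t)$ is independent of the chosen sequence. As $\sigma$ is weaker than the $\mathcal{D}$-topology, also $u(s_k)\stackrel{\sigma}{\to}u(t)$.

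The crux is to promote the pointwise $\sigma$-convergence from $D$ to every $t\in[0,T]$. For $t\in[0,T]\setminus D$, take any subsequence of $(u_n(t))_n$; by $\sigma$-sequential compactness of $K$ it has a further sub-subsequence with $u_{n_j}(t)\stackrel{\sigma}{\to}w$ for some $w\in K$. Applying \eqref{compatibility} to the pair $(u_{n_j}(t),u_{n_j}(s_k))$, where $u_{n_j}(s_k)\stackrel{\sigma}{\to}u(s_k)$, gives $\mathcal{D}(w,u(s_k))\le\omega(t,s_k)$; letting $k\to\infty$ and using $s_k\to t\notin\mathscr{C}$ together with $\mathcal{D}(u(s_k),u(t))\to0$ forces $\mathcal{D}(w,u(t))=0$, i.e.\ $w=u(t)$. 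Since every sub-subsequential $\sigma$-limit of $(u_n(t))_n$ equals $u(t)$, the elementary subsequence principle (if every subsequence has a further subsequence converging to the same point, the whole sequence converges to it) yields $u_n(t)\stackrel{\sigma}{\to}u(t)$. Combined with the points of $D$, this gives $u_n(t)\stackrel{\sigma}{\to}u(t)$ for all $t\in[0,T]$.

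With $\sigma$-convergence established at every time, \eqref{compatibility} gives $\mathcal{D}(u(s),u(t))\le\omega(s,t)$ for all $s,t\in[0,T]$, and for $r\in[0,T]\setminus\mathscr{C}$ the hypothesis $\omega(s,t)\to0$ as $(s,t)\to(r,r)$ yields the asserted $\mathcal{D}$-continuity of $u$ at $r$. \textbf{The main obstacle is the third step}: passing from convergence on the dense set to genuine pointwise $\sigma$-convergence everywhere cannot be done by a direct limit, since the curves are controlled only through the metrics $\mathcal{D}_n$ and the modulus $\omega$; it requires identifying all sub-subsequential $\sigma$-limits via the metric estimate \eqref{compatibility} and the completeness-based definition of $u(t)$, and then invoking the subsequence principle.
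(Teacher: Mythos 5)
Your proof is correct and follows essentially the same route as the paper: the paper's proof consists of citing the refined Arzel\`a--Ascoli result \cite[Proposition 3.3.1]{AGS} with the lower semicontinuity of the metric replaced by the compatibility condition \eqref{compatibility}, and your argument---diagonal extraction on a countable dense set containing $\mathscr{C}$, extension of $u$ by $\mathcal{D}$-completeness using the modulus $\omega$, and identification of every sub-subsequential $\sigma$-limit at a general time $t$ via \eqref{compatibility} followed by the subsequence principle---is precisely that proof carried out along the sequence of metrics $(\mathcal{D}_n)_n$.
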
 


\Proof  We follow the proof of \cite[Proposition 3.3.1]{AGS}  with the only difference that the lower semicontinuity condition for the metric is replaced by our condition \eqref{compatibility} along the sequence of metrics. \eop

Now  consider also a limiting functional $\phi: \mathscr{S} \to [0,\infty]$. We suppose lower semicontinuity of the functionals and the slopes in the following sense: For all $u \in \mathscr{S}$ and $(u_k)_k \subset \mathscr{S}$ we have
\begin{align}\label{eq: implication}
\begin{split}
u_k \stackrel{\sigma}{\to}  u \ \ \ \Rightarrow \ \ \ \liminf_{k \to \infty} |\partial \phi_{k}|_{\mathcal{D}_{k}} (u_{k}) \ge |\partial \phi|_{\mathcal{D}} (u), \ \ \ \liminf_{k \to \infty} \phi_{k}(u_{k}) \ge \phi(u).
\end{split}
\end{align}

We now obtain the following result about limits of curves of maximal slope.
 
\begin{theorem}\label{th:abstract convergence 1}
Consider a set $\mathscr{S}$, metrics $(\mathcal{D}_n)_{n \in \N}$ and functionals $\phi_n: \mathscr{S} \to [0,\infty]$, $n \in \N$,   as well as $\mathcal{D}$ and $\phi: \mathscr{S}\to [0,\infty]$. Suppose that there is a weaker topology $\sigma$ on $\mathscr{S}$ such that  \eqref{compatibility}, \eqref{basic assumptions2}, and the implication \eqref{eq: implication} hold. Moreover, assume that  $|\partial \phi_n|_{\mathcal{D}_n}$,  $|\partial \phi|_{\mathcal{D}}$ are strong upper gradients for $\phi_n$, $\phi$ with respect to $\mathcal{D}_n$, $\mathcal{D}$, respectively.
  
Let $T>0$ and $\bar{u} \in \mathscr{S}$. For all $n \in \N$ let $u_n$ be   a  curve of maximal slope for $\phi_n$ with respect to $|\partial \phi_n|_{\mathcal{D}_n}$ such that  
\begin{align}\label{eq: abstract assumptions1}
\begin{split}
(i)& \ \ \sup_{n \in \N} \sup_{t \in [0,T]} \big( \phi_n(u_n(t))  + \mathcal{D}_n(u_n(t),\bar{u}) \big) < \infty, \\  (ii)& \ \  u_n(0) \stackrel{\sigma}{\to}\bar{u}, \ \ \ \phi_n(u_n(0)) \to \phi(\bar{u}).
\end{split}
\end{align}
Then there exists a limiting function $u: [0,T] \to \mathscr{S}$ such that up to a subsequence, \MK not relabeled, \EEE
$$u_n(t) \stackrel{\sigma}{\to} u(t), \ \ \ \ \ \phi_n(u_n(t)) \to  \phi(u(t)) \ \ \ \forall t \in [0,T]$$
as $n \to \infty$ and $u$ is a curve of maximal slope for $\phi$ with respect to $|\partial \phi|_{\mathcal{D}}$. 
\end{theorem}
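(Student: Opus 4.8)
The plan is to follow the by-now classical Sandier--Serfaty scheme for passing to the limit along curves of maximal slope, adapted to the moving-metric setting exactly as in the proof of \cite[Proposition 3.3.1]{AGS}. The starting point is that a curve of maximal slope with respect to a \emph{strong} upper gradient satisfies an energy \emph{identity}, not just an inequality: combining the defining differential inequality of Definition~\ref{main def2}(iii) for $u_n$ (integrated in time) with the upper gradient inequality for $|\partial \phi_n|_{\mathcal{D}_n}$ and Young's inequality gives, for all $0 \le s \le t \le T$ (cf. the derivation of \eqref{maximalslope}),
\begin{align*}
\frac{1}{2}\int_s^t |u_n'|^2_{\mathcal{D}_n}(r)\,dr + \frac{1}{2}\int_s^t |\partial \phi_n|^2_{\mathcal{D}_n}(u_n(r))\,dr + \phi_n(u_n(t)) = \phi_n(u_n(s)).
\end{align*}
Taking $s=0$ and using the uniform energy bound \eqref{eq: abstract assumptions1}(i) together with $\phi_n \ge 0$, I obtain uniform $L^2(0,T)$ bounds on the metric speeds $m_n := |u_n'|_{\mathcal{D}_n}$ and on the slopes $|\partial \phi_n|_{\mathcal{D}_n}(u_n(\cdot))$. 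Hence, up to a first subsequence, $m_n \weakly m$ in $L^2(0,T)$ for some $m \ge 0$.

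Next I extract a convergent subsequence of curves. By \eqref{eq: abstract assumptions1}(i) and \eqref{basic assumptions2} (with $u_* = \bar u$), all the values $u_n(t)$ lie in one fixed $\sigma$-sequentially compact set $K$. Moreover Cauchy--Schwarz gives $\mathcal{D}_n(u_n(s),u_n(t)) \le \int_s^t m_n$, so that $\limsup_n \mathcal{D}_n(u_n(s),u_n(t)) \le \int_s^t m =: \omega(s,t)$, the convergence of the integrals following from $m_n \weakly m$ and $\mathbf{1}_{[s,t]} \in L^2(0,T)$. Since $m \in L^1(0,T)$, the function $\omega$ satisfies $\lim_{(s,t)\to(r,r)}\omega(s,t)=0$ for every $r$ (so $\mathscr{C}=\emptyset$), and the refined Arzel\`a--Ascoli Theorem~\ref{th: auxiliary3} yields a further subsequence (not relabeled) and a $\mathcal{D}$-continuous limit $u:[0,T]\to\mathscr{S}$ with $u_n(t)\stackrel{\sigma}{\to}u(t)$ for all $t$. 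The compatibility condition \eqref{compatibility} then gives $\mathcal{D}(u(s),u(t)) \le \liminf_n \mathcal{D}_n(u_n(s),u_n(t)) \le \int_s^t m$, so $u$ is $\mathcal{D}$-absolutely continuous with $|u'|_{\mathcal{D}} \le m$ a.e.; since $\sigma$ is Hausdorff and $u_n(0)\stackrel{\sigma}{\to}\bar u$ by \eqref{eq: abstract assumptions1}(ii), we have $u(0)=\bar u$.

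The heart of the matter is a lower-semicontinuity squeeze. For fixed $t$ I pass to the $\liminf$ in the energy identity for $u_n$, bounding the three terms separately: $\phi(u(t)) \le \liminf_n \phi_n(u_n(t))$ by \eqref{eq: implication}; $\frac{1}{2}\int_0^t m^2 \le \liminf_n \frac{1}{2}\int_0^t m_n^2$ by weak $L^2$-lower semicontinuity; and $\frac{1}{2}\int_0^t |\partial \phi|^2_{\mathcal{D}}(u) \le \liminf_n \frac{1}{2}\int_0^t |\partial \phi_n|^2_{\mathcal{D}_n}(u_n)$ by Fatou's lemma combined with the slope lower semicontinuity in \eqref{eq: implication}. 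Using superadditivity of $\liminf$, the convergence $\phi_n(u_n(0))\to\phi(\bar u)=\phi(u(0))$ from \eqref{eq: abstract assumptions1}(ii), and $\int_0^t|u'|^2_{\mathcal{D}} \le \int_0^t m^2$, these combine to the energy dissipation inequality
\begin{align*}
\phi(u(t)) + \frac{1}{2}\int_0^t |u'|^2_{\mathcal{D}}(r)\,dr + \frac{1}{2}\int_0^t |\partial \phi|^2_{\mathcal{D}}(u(r))\,dr \le \phi(u(0)).
\end{align*}
Since $|\partial \phi|_{\mathcal{D}}$ is a strong upper gradient for $\phi$, the reverse inequality holds by Young's inequality, so this is an \emph{equality} for every $t$ (and, comparing the $m$- and $|u'|$-terms, the squeeze even forces $m=|u'|_{\mathcal{D}}$ a.e.). The resulting identity is absolutely continuous in $t$, and differentiating yields the differential inequality of Definition~\ref{main def2}(iii): $u$ is a curve of maximal slope for $\phi$. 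Finally, because the whole chain of inequalities is pinned between $\phi(u(0))$ and itself, each individual $\liminf$ is an equality, and a routine $\limsup/\liminf$ bookkeeping upgrades these to genuine limits; in particular $\phi_n(u_n(t))\to\phi(u(t))$ for all $t$, which is the remaining claim.

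I expect the main obstacle to be precisely the lower semicontinuity along a sequence of \emph{different} metrics and functionals. The compatibility condition \eqref{compatibility} is what transfers the $L^2$-bound on $m_n$ into absolute continuity of $u$ together with $|u'|_{\mathcal{D}} \le m$, while the slope lower semicontinuity \eqref{eq: implication} is exactly what feeds (via Fatou) the dissipation term; neither would be available from a single fixed metric. The delicate point is arranging all three lower-semicontinuity estimates so that they combine into one \emph{tight} squeeze, since it is this tightness that simultaneously forces the maximal-slope property of the limit and the convergence of the energies.
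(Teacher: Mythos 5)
Your proposal is correct and follows essentially the same route as the paper's proof: the energy identity from the strong upper gradient property, uniform $L^2$ bounds on metric speeds and slopes, a weak $L^2$ limit of the speeds feeding the refined Arzel\`a--Ascoli result (Theorem~\ref{th: auxiliary3}), the compatibility condition \eqref{compatibility} giving $|u'|_{\mathcal{D}} \le m$, and the lower-semicontinuity squeeze against the strong upper gradient inequality via Young's inequality, which simultaneously yields the maximal-slope property and the energy convergence $\phi_n(u_n(t)) \to \phi(u(t))$. Even the concluding bookkeeping (all intermediate $\liminf$s become equalities, $m = |u'|_{\mathcal{D}}$ a.e.) matches the paper's argument.
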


 The result is an adaption of a statement in \cite{S2}  where condition \eqref{compatibility} is replaced by a lower bound condition on the metric derivatives along the sequence. We also refer to \cite{CG}, where a similar result is proved without the assumption that  the slopes are \emph{strong}   upper gradients (cf. \cite[Definition 1.2.1 and Definition 1.2.2]{AGS} for the definition of  strong and weak upper gradients), which comes at the expense that a suitable continuity condition along  $(\phi_k)_k$ for sequences $(u_k)_k$ converging with respect to the metric  has to be imposed.

\Proof From the properties  of a curve of maximal slope we have (cf. \eqref{maximalslope})
\begin{align}\label{abstract1}
\frac{1}{2} \int_0^t |u'_n|_{\mathcal{D}_n}^2(s) \, ds + \frac{1}{2} \int_0^t |\partial \phi_n|_{\mathcal{D}_n}^2(u_n(s)) \, ds + \phi_n(u_n(t)) = \phi_n(u_n(0)) 
\end{align}
for all $t \in [0,T]$. (Here, we have used that $|\partial \phi_n|_{\mathcal{D}_n}$  are strong upper gradients for $\phi_n$ with respect to $\mathcal{D}_n$.)  From \eqref{abstract1} and the equiboundedness of  $\phi_n(u_n(t))$   (see \eqref{eq: abstract assumptions1}(i))  we get
\begin{align*}
\sup_{n \in \N}  \int_0^T|u'_n|_{\mathcal{D}_n}^2(t) \, dt + \sup_{n \in \N} \int_0^T |\partial \phi_n|_{\mathcal{D}_n}^2(u_n(t)) \, dt < \infty.
\end{align*}
Consequently, there is a function $A \in L^2((0,T))$ such that $|u_n'|_{{\mathcal{D}}_{n}} \rightharpoonup A$ weakly in $L^2((0,T))$ up to a subsequence, \MK not relabeled. \EEE  In particular, this yields
\begin{align}\label{abstract2}
\limsup_{n \to \infty} {\mathcal{D}}_n(u_n(s),u_n(t)) \le  \limsup_{n \to \infty}\int_s^t|u_n'|_{\mathcal{D}_n}\le \omega(s,t):=\int_s^t A(r)\, dr  
\end{align}
for all $0 \le s \le t \le T$ by \eqref{metric-deriv}. Using \eqref{basic assumptions2}, \eqref{eq: abstract assumptions1}(i), and \eqref{abstract2}, we can apply Theorem \ref{th: auxiliary3} and obtain an absolutely continuous curve $u: [0,T] \to  \mathscr{S}$ as well as a further  subsequence \MK (not relabeled) \EEE such that $u_n(t) \stackrel{\sigma}{\to} u(t)$ for all $t \in [0,T] $.
Moreover, recalling \eqref{compatibility} we get $\mathcal{D}(u(s),u(t)) \le \int_s^t A(r)\,dr$, which gives $|u'| \le A$. By  \eqref{eq: implication} we get
\begin{align*}
\begin{split}
 |\partial \phi|_{\mathcal{D}} (u(t)) \le    \liminf_{n \to \infty} |\partial \phi_n|_{\mathcal{D}_n} (u_n(t)),\ \ \ 
 \phi(u(t)) \le   \liminf_{n \to \infty} \phi_n(u_n(t))
\end{split}
\end{align*}
for $t\in [0,T]$. This together with the fact that $|u_n'|_{{\mathcal{D}}_{n}} \rightharpoonup A$ weakly in $L^2((0,T))$  and  $|u'| \le A$ gives
\begin{align*}
\begin{split}
&\frac{1}{2} \int_0^t |u'|_{\mathcal{D}}^2(s) \, ds + \frac{1}{2} \int_0^t |\partial \phi|_{\mathcal{D}}^2(u(s)) \, ds + \phi(u(t)) \\
&\le  \frac{1}{2} \int_0^t A^2(s) \, ds + \frac{1}{2} \int_0^t \liminf_{n \to \infty}  |\partial \phi_n|_{\mathcal{D}_n}^2(u_n(s)) \, ds + \liminf_{n \to \infty} \phi_n(u_n(t))  \\
&\le \liminf_{n \to \infty} \Big( \frac{1}{2} \int_0^t |u'_n|_{\mathcal{D}_n}^2(s) \, ds + \frac{1}{2} \int_0^t |\partial \phi_n|_{\mathcal{D}_n}^2(u_n(s)) \, ds + \phi_n(u_n(t))\Big)
\end{split}
\end{align*}
for all $t \in [0,T]$, where in the second step we used Fatou's lemma. Using   \eqref{eq: abstract assumptions1}(ii),  \eqref{abstract1}, and $\bar{u} = u(0)$   we get 
$$\frac{1}{2} \int_0^t |u'|_{\mathcal{D}}^2(s) \, ds + \frac{1}{2} \int_0^t |\partial \phi|_{\mathcal{D}}^2(u(s)) \, ds + \phi(u(t)) \le \liminf_{n \to \infty} \phi_n(u_n(0)) = \phi(u(0)).$$
On the other hand, as $|\partial \phi|_{\mathcal{D}}$ is a strong upper gradient for $\phi$ with respect to $\mathcal{D}$, we obtain (recall Definition \ref{main def2})
\begin{align*}
\phi(u(0)) \le \phi(u(t)) + \int_0^t |\partial \phi|_{\mathcal{D}}(u(s))|u'|_{\mathcal{D}}(s) \,ds.
\end{align*}
Therefore, combining the previous estimates and using Young's inequality we derive
$$
|u'|_{\mathcal{D}}(t) =  |\partial \phi|_{\mathcal{D}}(u(t)), \ \ \ \phi(u(0))- \phi(u(t)) = \int_0^t |\partial \phi|_{\mathcal{D}}(u(s))|u'|_{\mathcal{D}}(s) \,ds
$$
for   a.e. $t\in[0,T]$ and $\lim_{n \to \infty}\phi_n(u_n(t)) = \phi(u(t))$ for all $t \in [0,T]$. It follows that $u$ is absolutely continuous and for a.e. $t \in [0,T]$ we have
\begin{align*}
\frac{\rm d}{ {\rm d} t} \phi(u(t)) = -  |\partial \phi|_{\mathcal{D}}(u(t))|u'|_{\mathcal{D}}(t).
\end{align*}
This concludes the proof. \eop

We now study discrete solutions along the sequence of functionals $(\phi_n)_n$.

\begin{theorem}\label{th:abstract convergence 2}
Consider a set $\mathscr{S}$, metrics $(\mathcal{D}_n)_{n \in \N}$ and functionals $\phi_n: \mathscr{S} \to [0,\infty)$, $n \in \N$,   as well as $\mathcal{D}$ and $\phi: \mathscr{S}\to [0,\infty)$. Suppose that there is a weaker topology $\sigma$ on $\mathscr{S}$ such that  \eqref{compatibility}, \eqref{basic assumptions2} and the implication \eqref{eq: implication} hold.  Moreover, assume that    $|\partial \phi|_{\mathcal{D}}$ is a  strong upper gradient for $ \phi $ with respect to  $\mathcal{D}$.
  
Let $T>0$. Consider a  null sequence $(\tau_k)_k$ and initial data $(U^0_{\tau_k})_k$, $\bar{u}$ with
\begin{align*}
\sup\nolimits_k \mathcal{D}_k(U^0_{\tau_k},\bar{u}) < + \infty, \ \ \ \ \  U^0_{\tau_k} \stackrel{\sigma}{\to} \bar{u} , \ \ \ \ \  \phi_k(U^0_{\tau_k}) \to \phi(\bar{u}).
\end{align*}
Then for each sequence of discrete solutions $(\tilde{U}_{\tau_k})_k$  starting from $(U^0_{\tau_k})_k$ there is a   curve $u$ of maximal slope for $\phi$ with respect to $|\partial \phi|_\mathcal{D}$ such that up to a subsequence, \MK not relabeled, \EEE  $\tilde{U}_{\tau_k}(t) \stackrel{\sigma}{\to} u(t)$ and $\phi_k(\tilde{U}_{\tau_k}(t)) \to \phi(u(t))$ for $t \in [0,T]$.

\end{theorem}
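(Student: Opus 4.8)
The plan is to follow the architecture of the proof of Theorem~\ref{th:abstract convergence 1}, replacing the exact energy identity of an honest curve of maximal slope by the \emph{discrete} energy--dissipation inequality produced by the incremental scheme \eqref{incremental}. The decisive observation is that, unlike in Theorem~\ref{th:abstract convergence 1}, here one does \emph{not} need $|\partial\phi_k|_{\mathcal{D}_k}$ to be strong upper gradients: the discrete inequality is a purely variational consequence of minimality, following \cite[Section~2.2]{AGS}. Concretely, testing the minimality of $U^n_{\tau_k}$ in \eqref{incremental} against $U^{n-1}_{\tau_k}$, summing over the nodes, and introducing the De Giorgi variational interpolant $\hat U_{\tau_k}$ (which coincides with $\tilde U_{\tau_k}$ at the nodes $t=n\tau_k$), one obtains for every $t\in[0,T]$
\begin{align*}
\frac{1}{2}\int_0^t |\tilde U'_{\tau_k}|_{\mathcal{D}_k}^2(s)\,ds + \frac{1}{2}\int_0^t |\partial\phi_k|_{\mathcal{D}_k}^2(\hat U_{\tau_k}(s))\,ds + \phi_k(\tilde U_{\tau_k}(t)) \le \phi_k(U^0_{\tau_k}),
\end{align*}
which plays the role of the energy identity \eqref{maximalslope}.

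Since $\phi_k(U^0_{\tau_k})\to\phi(\bar u)$ is bounded, this inequality gives $\sup_k\int_0^T|\tilde U'_{\tau_k}|^2_{\mathcal{D}_k}<\infty$ and $\sup_k\sup_t\phi_k(\tilde U_{\tau_k}(t))<\infty$; combined with $\sup_k\mathcal{D}_k(U^0_{\tau_k},\bar u)<\infty$ and the triangle inequality one gets $\sup_k\sup_t\mathcal{D}_k(\tilde U_{\tau_k}(t),\bar u)<\infty$, so by \eqref{basic assumptions2} all trajectories lie in one $\sigma$-sequentially compact set. Next I extract a weak $L^2(0,T)$ limit $A$ of $|\tilde U'_{\tau_k}|_{\mathcal{D}_k}$; testing against $\mathbf{1}_{[s,t]}$ yields $\int_s^t|\tilde U'_{\tau_k}|_{\mathcal{D}_k}\to\int_s^t A$, and chaining the nodal triangle inequalities (the $O(\tau_k)$ boundary error from the piecewise-constant jumps being negligible) gives $\limsup_k\mathcal{D}_k(\tilde U_{\tau_k}(s),\tilde U_{\tau_k}(t))\le\omega(s,t):=\int_s^t A(r)\,dr$. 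The refined Arzel\`a--Ascoli Theorem~\ref{th: auxiliary3} then furnishes, along a further subsequence, a $\mathcal{D}$-continuous limit $u:[0,T]\to\mathscr{S}$ with $\tilde U_{\tau_k}(t)\stackrel{\sigma}{\to}u(t)$ for all $t$, and, recalling \eqref{compatibility}, $\mathcal{D}(u(s),u(t))\le\int_s^t A$, i.e.\ $|u'|_{\mathcal{D}}\le A$.

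With compactness established, the limit passage is essentially verbatim the second half of the proof of Theorem~\ref{th:abstract convergence 1}. Using \eqref{eq: implication} I pass to the $\liminf$ termwise in the discrete inequality: lower semicontinuity of $\phi$ controls $\phi(u(t))\le\liminf_k\phi_k(\tilde U_{\tau_k}(t))$, weak $L^2$ lower semicontinuity together with $|u'|_{\mathcal{D}}\le A$ controls the velocity term, and Fatou's lemma together with the slope half of \eqref{eq: implication} controls the slope term, yielding
\begin{align*}
\frac{1}{2}\int_0^t |u'|_{\mathcal{D}}^2(s)\,ds + \frac{1}{2}\int_0^t |\partial\phi|_{\mathcal{D}}^2(u(s))\,ds + \phi(u(t)) \le \phi(u(0)),
\end{align*}
with $\phi(u(0))=\lim_k\phi_k(U^0_{\tau_k})=\phi(\bar u)$. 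Since $|\partial\phi|_{\mathcal{D}}$ is a strong upper gradient for $\phi$ (Definition~\ref{main def2}(i)), the reverse inequality holds, and Young's inequality forces equality; hence $u$ is a curve of maximal slope, and the equality case upgrades the lower-semicontinuity bounds to $\phi_k(\tilde U_{\tau_k}(t))\to\phi(u(t))$ for all $t\in[0,T]$.

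The step I expect to be the main obstacle is the slope term, since in the discrete inequality $|\partial\phi_k|_{\mathcal{D}_k}$ is evaluated at the \emph{variational} interpolant $\hat U_{\tau_k}$ rather than at $\tilde U_{\tau_k}$, so \eqref{eq: implication} cannot be applied before one knows that $\hat U_{\tau_k}$ shares the limit of $\tilde U_{\tau_k}$. I would close this gap with the variational estimate of \cite[Section~2.2]{AGS}, which bounds $\mathcal{D}_k(\hat U_{\tau_k}(\cdot),\tilde U_{\tau_k}(\cdot))$ in terms of the discrete velocity and shows it tends to $0$ in $L^2(0,T)$; hence, along a further subsequence, $\mathcal{D}_k(\hat U_{\tau_k}(t),\tilde U_{\tau_k}(t))\to0$ for a.e.\ $t$, and \eqref{compatibility} transfers the $\sigma$-limit from $\tilde U_{\tau_k}$ to $\hat U_{\tau_k}$, giving $\hat U_{\tau_k}(t)\stackrel{\sigma}{\to}u(t)$ at a.e.\ $t$, which suffices because the slope enters only under the time integral. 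The remaining technical points---the $O(\tau_k)$ jump error in the equicontinuity estimate and the measurability of the relevant maps---are standard and are handled exactly as in \cite{AGS}.
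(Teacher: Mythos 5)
Your proposal is correct and follows essentially the same route as the paper, which writes out no proof of its own but refers to \cite[Section 2]{Ortner}: the De Giorgi variational interpolant $\hat U_{\tau_k}$ with its discrete energy--dissipation inequality, the refined Arzel\`a--Ascoli result (Theorem \ref{th: auxiliary3}) with $\omega(s,t)=\int_s^t A$, the liminf passage via \eqref{compatibility} and \eqref{eq: implication} (including your correct handling of the subtlety that the slope is evaluated at $\hat U_{\tau_k}$, transferred to the limit of $\tilde U_{\tau_k}$ by the $O(\sqrt{\tau_k})$ distance estimate together with the $\sigma$-compactness from \eqref{basic assumptions2}), and the closing argument combining the strong upper gradient property of $|\partial\phi|_{\mathcal{D}}$ with Young's inequality. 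The only point worth recording is that the variational interpolant requires minimizers of $\bar{\Phi}_k(r,U^{n-1}_{\tau_k};\cdot)$ for all intermediate steps $0<r\le\tau_k$, which is not literally among the theorem's stated hypotheses (only the discrete solutions themselves are assumed to exist) but is guaranteed in the cited setting and in the paper's application by the direct method.
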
 

\BBB
For the proof we refer to \cite[Section 2]{Ortner}. Let us also mention the recently obtained variant \cite{BCGS} where, similarly to \cite{CG},  the lower semicontinuity along the sequence $(\phi_n)_n$ (see \eqref{eq: implication}) is replaced by a continuity condition. Note that in their setting it is  not necessary to require that  $|\partial \phi|_{\mathcal{D}}$ is a  strong upper gradient. \EEE

\section{Properties of energies and dissipation distances}\label{sec:energy-dissipation}

In this section we prove several properties about the energies and dissipation distances. Let $\delta>0$, $0 < \alpha < 1$ and recall the definition of the nonlinear energy in \eqref{nonlinear energy}-\eqref{assumptions-P} as  well as \eqref{eq: assumptions-D}. We \MK recall that  \EEE  $\mathscr{S}_\delta^M = \lbrace y \in W^{2,p}_\id(\Omega): \phi_\delta(y) \le M \rbrace$. In the whole section, $C\ge 1$ and $ 0 < c \le 1$ indicate generic constants, which may vary from line to line and depend on $M$, $\Omega$, the exponent $p>d$ (see \eqref{assumptions-P}), and on the constants in \eqref{assumptions-W},  \eqref{assumptions-P}, \eqref{eq: assumptions-D}, but are always independent of the small parameter $\delta$.

\subsection{Basic properties}

We start with some properties about the Hessian of $W$ and $D$. By $\partial^2 D^2$ we denote the Hessian and by $\partial^2_{F_1^2} D^2, \partial^2_{F_2^2} D^2$ the Hessian in direction of the first or second entry of $D^2$, respectively. Moreover, we define ${\rm sym }(F) = \frac{F + F^\top}{2}$ for $F \in \R^{d \times d}$ and recall the definition of $\C_W,\C_D$ in \eqref{linear equation}. By  $\Id \in \R^{d \times d}$ we again denote the identity matrix.

\begin{lemma}[Properties of Hessian]\label{D-lin}
Let $F_1,F_2 \in \R^{d \times d}$ and  $Y \in \R^{d \times d}$ in a neighborhood of $\Id$ such that $\partial^2 D^2(Y,Y)$ exists.

(i) We have $\partial^2 D^2(Y,Y)[(F_1,F_2),(F_1,F_2)] = \partial^2_{F_1^2}D^2(Y,Y)[F_1-F_2,F_1-F_2] = \partial^2_{F_2^2}D^2(Y,Y)[F_1-F_2,F_1-F_2]$.

(ii) We have $\partial^2 D^2(\Id,\Id)[(F_1,F_2),(F_1,F_2)] = \C_D[{\rm sym}(F_1-F_2),  {\rm sym}(F_1-F_2)]$.

(iii) There is a constant $c>0$ independent of $F$ such that $\C_W[F,F] \ge c|{\rm sym}(F)|^2$, $\C_D[F,F] \ge c|{\rm sym}(F)|^2$. 
\end{lemma}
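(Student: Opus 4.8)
The plan is to exploit two degeneracies of $D$: it vanishes on the diagonal $\lbrace F_1 = F_2\rbrace$, and, at the identity, it is invariant under left multiplication by $SO(d)$, so that it depends only on the right Cauchy--Green tensor. For (i), I would first observe that since $D^2 \ge 0$ and $D^2(Y,Y)=0$ (by \eqref{eq: assumptions-D}(i) and $D(F,F)=0$), every diagonal point $(Y,Y)$ is a global minimizer of $D^2$, so the full gradient $\partial D^2(Y,Y)$ vanishes; by \eqref{eq: assumptions-D}(iv), $D^2$ is $C^3$ near $SO(d)\times SO(d)$, so its Hessian at $(Y,Y)$ exists and the second-order Taylor expansion is legitimate. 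Writing $H_{11}=\partial^2_{F_1^2}D^2(Y,Y)$, $H_{22}=\partial^2_{F_2^2}D^2(Y,Y)$, $H_{12}=\partial^2_{F_1F_2}D^2(Y,Y)$, the symmetry \eqref{eq: assumptions-D}(ii) gives $H_{11}=H_{22}$ and that $H_{12}$ is a symmetric bilinear form. The key relation comes from differentiating $t\mapsto D^2(Y+tZ,Y+tZ)\equiv 0$ twice at $t=0$, which yields $H_{11}[Z,Z]+2H_{12}[Z,Z]+H_{22}[Z,Z]=0$ for all $Z$, hence $H_{12}=-H_{11}$ after polarization. Substituting $H_{22}=H_{11}$ and $H_{12}=-H_{11}$ into the block expansion $\partial^2 D^2(Y,Y)[(F_1,F_2),(F_1,F_2)]=H_{11}[F_1,F_1]+2H_{12}[F_1,F_2]+H_{22}[F_2,F_2]$ collapses it to $H_{11}[F_1-F_2,F_1-F_2]$, which is the claim.

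For (ii), I would apply (i) at $Y=\Id$ to reduce to $\partial^2_{F_1^2}D^2(\Id,\Id)[F_1-F_2,F_1-F_2]$. By separate frame indifference \eqref{eq: assumptions-D}(v) with $F_2=Q_2=\Id$, the map $F\mapsto D^2(F,\Id)$ is invariant under $F\mapsto QF$, $Q\in SO(d)$, and therefore factors through $C=F^\top F$, say $D^2(F,\Id)=\Psi(F^\top F)$ with $\Psi\ge 0$, $\Psi(\Id)=0$, $\partial\Psi(\Id)=0$. Expanding $C(\eps)=(\Id+\eps G)^\top(\Id+\eps G)=\Id+2\eps\,{\rm sym}(G)+\eps^2 G^\top G$ and differentiating $\eps\mapsto\Psi(C(\eps))$ twice at $\eps=0$ shows that $\partial^2_{F_1^2}D^2(\Id,\Id)[G,G]$ depends only on ${\rm sym}(G)$. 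Recalling the definition $\C_D=\tfrac12\partial^2_{F_1^2}D^2(\Id,\Id)$ from \eqref{linear equation} then identifies the quadratic form with $\C_D[{\rm sym}(F_1-F_2),{\rm sym}(F_1-F_2)]$, the numerical factor being inherited directly from that definition.

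For (iii), both bounds follow the same linearization at $\Id$. Since $\Id\in SO(d)$ minimizes $W$ with $W(\Id)=0$ and $\partial_F W(\Id)=0$, Taylor expansion gives $W(\Id+\eps G)=\tfrac{\eps^2}{2}\C_W[G,G]+o(\eps^2)$; combining with the growth bound \eqref{assumptions-W}(iii) and the standard fact that $\dist^2(\Id+\eps G,SO(d))=\eps^2|{\rm sym}(G)|^2+o(\eps^2)$ (the tangent space to $SO(d)$ at $\Id$ is ${\rm Skew}(d)$, whose orthogonal complement is the symmetric matrices), dividing by $\eps^2$ and letting $\eps\to 0$ yields $\C_W[G,G]\ge c|{\rm sym}(G)|^2$. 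The bound for $\C_D$ is identical, using $D^2(\Id+\eps G,\Id)=\eps^2\C_D[G,G]+o(\eps^2)$ together with $D(F,\Id)\ge c\,\dist(F,SO(d))$ from \eqref{eq: assumptions-D}(vi).

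The main obstacle is the structural reduction underlying (i) and (ii): the collapse of the full Hessian onto the difference $F_1-F_2$, and its further restriction to the symmetric part, both rest on the degeneracy of $D$ (vanishing on the diagonal and rotational invariance), and the argument must carefully exploit that all first derivatives vanish and that the mixed Hessian block satisfies $H_{12}=-H_{11}$. Once these identities are established, the remaining steps, including the distance linearization used in (iii), are routine.
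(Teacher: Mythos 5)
Your proof is correct and takes essentially the same route as the paper: part (i) is exactly the paper's argument (splitting the Hessian into blocks with $H_{11}=H_{22}$ and symmetric $H_{12}$, differentiating $D^2\equiv 0$ along the diagonal to get $H_{12}=-H_{11}$ by polarization, and collapsing the block expansion onto $F_1-F_2$). For (ii) and (iii) the paper merely asserts ``elementary computation'' and ``follows from \eqref{assumptions-W}(iii), \eqref{eq: assumptions-D}(vi)'', and your elaborations — factoring $D^2(\cdot,\Id)$ through $C=F^\top F$ via \eqref{eq: assumptions-D}(v), and Taylor-expanding $W$ and $D^2$ at $\Id$ against the quadratic lower bounds together with $\dist^2(\Id+\eps G, SO(d))=\eps^2|{\rm sym}(G)|^2+o(\eps^2)$ — are precisely the intended arguments, so there is no gap.
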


\Proof (i) Set $H= \partial^2 D^2(Y,Y)$ for brevity. By symmetry \eqref{eq: assumptions-D}(ii) we find two fourth order tensors $H_1,H_2 : \R^{d\times d} \times \R^{d\times d} \to \R$ such that $H[(F_1,F_2),(F_1,F_2)]  = H_1[F_1,F_1] + 2H_2[F_1,F_2] + H_1[F_2,F_2]$  and $H_2[F_1,F_2] = H_2[F_2,F_1]$. Note that $H_1 = \partial^2_{F_1^2}D^2(Y,Y) =\partial^2_{F_2^2} D^2(Y,Y)$.  As $D(F,F)=0$ for all $F \in GL_+(d)$, we get  $H[(F,F),(F,F)] = 0$ for all $F \in \R^{d \times d}$. Thus, we obtain $H_1[F,F] = -H_2[F,F]$ for all $F \in \R^{d \times d}$ and we compute
\begin{align*}
H_1[F_1-F_2,&F_1-F_2]\\& = - H_2[F_1-F_2,F_1-F_2] = -H_2[F_1,F_1] + 2H_2[F_1,F_2] - H_2[F_2,F_2] \\ &= H_1[F_1,F_1] + 2H_2[F_1,F_2] + H_1[F_2,F_2] = H[(F_1,F_2),(F_1,F_2)].
\end{align*} 
Property (ii) follows from  frame indifference   \eqref{eq: assumptions-D}(v) by an elementary computation.  Finally, the growth condition for $\C_W$ and $\C_D$ stated in (iii) follow from \eqref{assumptions-W}(iii) and \eqref{eq: assumptions-D}(vi), respectively. \eop

In the following, by $\id$ we again denote the identity function.

\begin{lemma}[Rigidity]\label{lemma:rigidity}
There is  constant  $C>1$ independent of $\delta$ such that for $\delta$ sufficiently small for all $y  \in \mathscr{S}_\delta^M$  we have  

\begin{itemize}
\item[(i)] $\Vert y - \id \Vert_{H^1(\Omega)} \le C\Vert \dist(\nabla y,SO(d)) \Vert_{L^2(\Omega)}$, 
\item[(ii)] $\Vert \nabla y -\Id \Vert_{L^\infty(\Omega)}\le C\delta^{\alpha}$, \ \ \ $\Vert   y -\id \Vert_{L^\infty(\Omega)}\le C\delta^{\alpha}$.
\end{itemize}
\end{lemma}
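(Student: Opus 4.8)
The plan is to read off the two \emph{a priori} scaling bounds hidden in the constraint $\phi_\delta(y)\le M$ and then feed them into the geometric rigidity estimate of Friesecke, James and M\"uller. Since $f\equiv 0$, the bound $\phi_\delta(y)\le M$ decouples into $\int_\Omega W(\nabla y)\le M\delta^2$ and $\int_\Omega P(\nabla^2 y)\le M\delta^{p\alpha}$. By the coercivity of $W$ in \eqref{assumptions-W}(iii) and the lower growth bound on $P$ in \eqref{assumptions-P}(iii) these yield
\[
\Vert \dist(\nabla y,SO(d))\Vert_{L^2(\Omega)}\le C\delta,\qquad \Vert \nabla^2 y\Vert_{L^p(\Omega)}\le C\delta^\alpha,
\]
with $C$ depending only on $M,c,c_1$. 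These are the only consequences of $y\in\mathscr{S}_\delta^M$ that I would use; note that $\nabla^2y\in L^p$ means $\nabla y\in W^{1,p}(\Omega)$, and since $p>d$ Morrey's embedding gives $\nabla y\in C^0(\overline\Omega)$.

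\textbf{Proof of (i).} First I would apply geometric rigidity on the connected Lipschitz domain $\Omega$ to $y\in H^1(\Omega;\R^d)$: there is a rotation $R\in SO(d)$ with $\Vert\nabla y-R\Vert_{L^2(\Omega)}\le C\Vert\dist(\nabla y,SO(d))\Vert_{L^2(\Omega)}$. The decisive point is to pin down $R-\Id$ via the boundary condition $y-\id\in W^{1,2}_0(\Omega)$, which forces $\int_\Omega\nabla y=|\Omega|\,\Id$. Hence
\[
|\Omega|\,|R-\Id|=\Big|\int_\Omega(R-\nabla y)\Big|\le |\Omega|^{1/2}\Vert\nabla y-R\Vert_{L^2(\Omega)},
\]
so that $|R-\Id|\le C|\Omega|^{-1/2}\Vert\dist(\nabla y,SO(d))\Vert_{L^2}$. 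Combining the two displays gives $\Vert\nabla y-\Id\Vert_{L^2}\le C\Vert\dist(\nabla y,SO(d))\Vert_{L^2}$, and since $y-\id\in H^1_0(\Omega)$ the Poincar\'e inequality upgrades this to the asserted $H^1$ bound. This step uses neither the second-gradient bound nor smallness of $\delta$.

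\textbf{Proof of (ii).} Here the second-gradient control enters. Setting $g:=\nabla y-R\in W^{1,p}(\Omega)$, so that $\nabla g=\nabla^2 y$, I would interpolate the $L^2$-smallness of $g$ against the $L^p$-smallness of $\nabla g$. A Gagliardo--Nirenberg inequality on the bounded Lipschitz domain $\Omega$ (using $W^{1,p}\hookrightarrow L^\infty$) gives
\[
\Vert g\Vert_{L^\infty(\Omega)}\le C\Vert\nabla g\Vert_{L^p(\Omega)}^{a}\,\Vert g\Vert_{L^2(\Omega)}^{1-a}+C\Vert g\Vert_{L^2(\Omega)},
\]
where $a\in(0,1)$ solves $0=a(\tfrac1p-\tfrac1d)+(1-a)\tfrac12$ and satisfies $a<1$ precisely because $p>d$. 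Inserting the two scaling bounds yields
\[
\Vert g\Vert_{L^\infty}\le C\delta^{a\alpha+(1-a)}+C\delta\le C\delta^\alpha,
\]
the last inequality holding for $\delta\le 1$ since $a\alpha+(1-a)=1-a(1-\alpha)\ge\alpha$ and $\delta\le\delta^\alpha$. Then $\Vert\nabla y-\Id\Vert_{L^\infty}\le\Vert g\Vert_{L^\infty}+|R-\Id|\le C\delta^\alpha$, and since $y-\id\in W^{1,p}_0(\Omega)$, Morrey's embedding combined with Poincar\'e's inequality gives $\Vert y-\id\Vert_{L^\infty}\le C\Vert\nabla y-\Id\Vert_{L^p}\le C\delta^\alpha$.

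\textbf{Main obstacle.} The crux is the passage from $L^2$-closeness to a single rotation (the direct output of rigidity) to the \emph{uniform} closeness in (ii): the only mechanism available for this upgrade is the quantitative second-gradient bound, exploited through interpolation, which is exactly where the nonsimple term is indispensable. The remaining care is bookkeeping: one must check that the rigidity constant and the Gagliardo--Nirenberg and Poincar\'e constants depend only on $\Omega$, so that the final $C$ is genuinely $\delta$-independent, and that smallness of $\delta$ is needed only to guarantee $\delta\le\delta^\alpha$ in the last estimate.
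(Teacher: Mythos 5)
Your proof is correct, and it runs on the same engine as the paper's — the Friesecke--James--M\"uller rigidity estimate together with the bound $\Vert \nabla^2 y\Vert_{L^p(\Omega)} \le C\delta^\alpha$ extracted from \eqref{assumptions-P}(iii) — but both steps use genuinely different mechanisms. For (i), the paper pins down the affine map through the boundary: after rigidity and Poincar\'e it passes to a trace estimate using $y=\id$ on $\partial\Omega$ and then invokes a lemma of Dal Maso--Negri--Percivale to bound $|b|+|Q-\Id|$ by the $L^2(\partial\Omega)$-distance; you instead pin the rotation by an interior argument, noting that $y-\id\in W^{1,2}_0(\Omega)$ forces $\int_\Omega \nabla y = |\Omega|\,\Id$ (valid since $\int_\Omega \nabla u = 0$ for all $u\in W^{1,1}_0(\Omega)$ by density of $C_c^\infty$), which avoids trace estimates and the external lemma entirely — arguably the more elementary route, though it only exploits the mean of $\nabla y$ where the paper's trace argument would also survive settings with boundary data prescribed on part of $\partial\Omega$. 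For (ii), the paper uses a Morrey--Poincar\'e affine approximation, $\Vert y - (F\cdot+b)\Vert_{W^{1,\infty}(\Omega)} \le C\delta^\alpha$, and then identifies $F,b$ with $\Id,0$ up to $O(\delta^\alpha)$ via the $H^1$ estimate from (i); you instead interpolate directly with Gagliardo--Nirenberg between $\Vert \nabla y - R\Vert_{L^2(\Omega)}\le C\delta$ and $\Vert\nabla^2 y\Vert_{L^p(\Omega)}\le C\delta^\alpha$, which in fact yields the slightly stronger exponent $\delta^{1-a(1-\alpha)}$ with $a<1$, of which the stated $\delta^\alpha$ is a weakening; your exponent bookkeeping ($a\alpha+(1-a)\ge\alpha$, $\delta\le\delta^\alpha$ for $\delta\le1$) is accurate, and the additive lower-order term in the domain form of Gagliardo--Nirenberg is correctly included. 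Two points of hygiene, neither a gap: your decoupling of $\phi_\delta(y)\le M$ into separate bounds on the $W$- and $P$-terms uses $f\equiv 0$, which matches the paper's standing convention announced after Theorem \ref{maintheorem3} (for general $f\in L^\infty$ one first absorbs the force term, as the paper notes is standard); and the single-rotation output of rigidity tacitly uses connectedness of $\Omega$, exactly as the paper's own proof does.
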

 
\Proof (i) is a typical geometric rigidity argument, see e.g. \cite{DalMasoNegriPercivale:02, FrieseckeJamesMueller:02}: By   \cite[Theorem 3.1]{FrieseckeJamesMueller:02} and Poincar\'e's inequality we find a rotation $Q \in SO(d)$ and $b \in \R^d$ such that 
\begin{align}\label{rig1}
\Vert y - (Q\cdot + b) \Vert_{H^1(\Omega)} \le C\Vert \dist(\nabla y,SO(d)) \Vert_{L^2(\Omega)}.
\end{align}
Passing to a trace estimate and using $y = \id$ on $\partial \Omega$, we get $\Vert \id - (Q\cdot + b) \Vert_{L^2(\partial \Omega)} \le C\Vert \dist(\nabla y,SO(d)) \Vert_{L^2(\Omega)}$. Using  \cite[Lemma 3.3]{DalMasoNegriPercivale:02}  we then find $|b| + |Q- \Id| \le C\Vert \id - (Q\cdot + b) \Vert_{L^2(\partial \Omega)}$ for a constant only depending on $\Omega$. This together with \eqref{rig1} implies (i).

We now prove (ii). By the definition of $\phi_\delta$ and \eqref{assumptions-P}(iii) we get $\Vert \nabla^2 y \Vert^p_{L^p(\Omega)} \le CM\delta^{p\alpha}$ for all $y \in \mathscr{S}^M_\delta$. As $p>d$, Poincar\'e's inequality yields some $F \in \R^{d \times d}$ and $b \in \R^d$ such that  
\begin{align}\label{rig2}
\Vert y -  (F\cdot + b)\Vert_{W^{1,\infty}(\Omega)} \le C\delta^{\alpha}
\end{align}
for a constant additionally depending on $\Omega$, M, and $p$. Using $\phi_\delta(y) \le M$, \eqref{assumptions-W}(iii), and (i) we compute
\begin{align*}
\Vert (F\cdot + b)- \id \Vert^2_{H^1(\Omega)} \le C\Vert \dist(\nabla y,SO(d)) \Vert^2_{L^2(\Omega)} + C|\Omega|\delta^{2\alpha}
 \le  C\delta^2M + C|\Omega|\delta^{2\alpha} . 
\end{align*}
Since $\alpha \le 1$, this gives $|b| + |F - \Id| \le C\delta^\alpha$, which together with \eqref{rig2}  yields (ii). \eop

In the following we set for shorthand $H_Y := \frac{1}{2}\partial^2_{F_1^2} D^2(Y,Y) = \frac{1}{2}\partial^2_{F_2^2} D^2(Y,Y)$  for $Y \in GL_+(d)$ and given a deformation $y \in W^{2,p}_\id(\Omega)$ we also introduce the mapping $H_{\nabla y}: \Omega \to \R^{d \times d \times d \times d}$   by $H_{\nabla y}(x) = H_{\nabla y(x)}$ for $x \in \Omega$. Recall the definition of $\mathcal{D}_\delta, \bar{\mathcal{D}}_0$ in \eqref{eq: D,D0} and $\C_W$ below \eqref{linear equation}.

\begin{lemma}[Dissipation and energy]\label{lemma: metric space-properties}
There are constants $0<c<1$, $C>1$ independent of $\delta$ such that   for all $y,y_0,y_1 \in \mathscr{S}_\delta^M$ for $\delta$ sufficiently small we have  

\begin{itemize}
\item[(i)] $\big|\delta^2\mathcal{D}_\delta(y_0,y_1)^2 - \int_\Omega H_{\nabla y_0}[\nabla (y_1 -  y_0),\nabla (y_1 -   y_0) ]| \le C \Vert \nabla (y_1-   y_0) \Vert^3_{L^3(\Omega)}$,
\item[(ii)] $c \Vert  y_1 - y_0 \Vert_{H^1(\Omega)} \le \delta\mathcal{D}_\delta(y_0,y_1) \le C\Vert  y_1 - y_0 \Vert_{H^1(\Omega)}$,
\item[(iii)] $\big|\mathcal{D}_\delta(y_0,y_1)^2 - \bar{\mathcal{D}}_0(u_0,u_1)^2\big| \le C\delta^\alpha$,
\item[(iv)] $\big|\delta^{-2} \int_\Omega W(\nabla y) - \int_\Omega \frac{1}{2}\C_W[e (u),e (u) ]\big| \le C\delta^\alpha,$
\end{itemize}
where $u = \delta^{-1}(y - \id)$ and  $u_i = \delta^{-1}(y_i - \id)$, $i=0,1$. In particular, (ii) shows that the topologies induced by $\mathcal{D}_\delta$ and $\Vert \cdot \Vert_{H^1(\Omega)}$ coincide.
\end{lemma}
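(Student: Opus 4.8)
The plan is to prove the four estimates in turn, relying throughout on the pointwise smallness $\Vert \nabla y - \Id \Vert_{L^\infty(\Omega)} \le C\delta^\alpha$ from Lemma~\ref{lemma:rigidity}(ii) together with the $C^3$-regularity of $D^2$ near $SO(d) \times SO(d)$ from \eqref{eq: assumptions-D}(iv). For (i), I would fix $y_0,y_1 \in \mathscr{S}_\delta^M$ and perform a pointwise Taylor expansion of $F_1 \mapsto D^2(F_1, \nabla y_0(x))$ at $F_1 = \nabla y_0(x)$, evaluated at $F_1 = \nabla y_1(x)$. Since $D^2(\nabla y_0, \nabla y_0) = 0$ by \eqref{eq: assumptions-D} and the first-order term $\partial_{F_1} D^2(\nabla y_0,\nabla y_0)$ vanishes (as $\nabla y_0$ is a pointwise minimizer of $F_1\mapsto D^2(F_1,\nabla y_0)\ge 0$), the zeroth- and first-order terms drop out, leaving the Hessian term $\frac12\partial^2_{F_1^2}D^2(\nabla y_0,\nabla y_0)[\nabla(y_1-y_0),\nabla(y_1-y_0)] = H_{\nabla y_0}[\nabla(y_1-y_0),\nabla(y_1-y_0)]$ plus a third-order remainder. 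The $C^3$-bound on $D^2$ on the relevant neighborhood (guaranteed since both gradients lie within $C\delta^\alpha$ of $SO(d)$) controls this remainder pointwise by $C|\nabla(y_1-y_0)(x)|^3$; integrating over $\Omega$ and multiplying by $\delta^2$ (recall $\delta^2\mathcal{D}_\delta^2 = \int_\Omega D^2(\nabla y_0,\nabla y_1)$) gives the claimed bound with the $L^3$-norm on the right.

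For (ii), I would combine (i) with the coercivity $H_Y[\cdot,\cdot] \gtrsim |{\rm sym}(\cdot)|^2$ obtained from Lemma~\ref{D-lin}(ii),(iii) and the continuity of $Y\mapsto H_Y$, valid uniformly for $Y$ near $SO(d)$. The upper bound is immediate from boundedness of $H_{\nabla y_0}$ and the estimate in (i), absorbing the cubic error using $\Vert\nabla(y_1-y_0)\Vert_{L^\infty}\le C\delta^\alpha$ so that $\Vert\nabla(y_1-y_0)\Vert_{L^3}^3 \le C\delta^\alpha\Vert\nabla(y_1-y_0)\Vert_{L^2}^2$. The lower bound requires more care: the Hessian only controls the symmetric part of the strain, so after bounding $\int_\Omega H_{\nabla y_0}[\nabla(y_1-y_0),\nabla(y_1-y_0)]$ from below by $c\int_\Omega|{\rm sym}(\nabla(y_1-y_0))|^2 = c\Vert e(y_1-y_0)\Vert_{L^2}^2$, I would invoke Korn's inequality (recalling $y_1 - y_0 \in W^{2,p}_0(\Omega)$, hence zero boundary values) to pass from the symmetrized gradient back to the full $H^1$-norm of $y_1 - y_0$, again absorbing the cubic error for $\delta$ small.

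For (iii) and (iv), the strategy is a linearization-of-the-Hessian argument: I would compare $H_{\nabla y_0}$ with its value at the identity, $H_{\Id} = \frac12\partial^2_{F_1^2}D^2(\Id,\Id) = \C_D$ (Lemma~\ref{D-lin}(ii)), using that $\Vert\nabla y_0 - \Id\Vert_{L^\infty}\le C\delta^\alpha$ and the Lipschitz continuity of $Y\mapsto H_Y$ to get $\Vert H_{\nabla y_0} - \C_D\Vert_{L^\infty}\le C\delta^\alpha$. For (iii), I rewrite $\mathcal{D}_\delta(y_0,y_1)^2 = \delta^{-2}\int_\Omega D^2(\nabla y_0,\nabla y_1)$, apply (i) to replace this by $\int_\Omega H_{\nabla y_0}[\nabla(u_1-u_0),\nabla(u_1-u_0)]$ (noting $\nabla(y_1-y_0)=\delta\nabla(u_1-u_0)$, which cancels the $\delta^{-2}$ against the $\delta^2$ and turns the $L^3$-error into $\delta^\alpha$-order after using the $L^\infty$ gradient bound), then swap $H_{\nabla y_0}$ for $\C_D$ and finally use frame indifference to reduce $\C_D[\nabla(u_1-u_0),\cdot]$ to its symmetric part, matching $\bar{\mathcal{D}}_0^2$. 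For (iv), I Taylor-expand $W$ at $\Id$ using \eqref{assumptions-W}: $W(\Id)=0$ and $\partial_F W(\Id) = 0$ (since $\Id$ minimizes $W$), leaving $\frac12\C_W[\nabla(y-\id),\nabla(y-\id)]$ plus a cubic remainder; writing $\nabla(y-\id)=\delta\nabla u$ and reducing to the symmetric part via frame indifference yields $\frac12\C_W[e(u),e(u)]$, with the remainder again of order $\delta^\alpha$.

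The main obstacle I anticipate is the lower bound in (ii): since the dissipation Hessian is genuinely degenerate in the skew direction (a structural consequence of frame indifference, as emphasized in the introduction), one cannot directly recover the full $H^1$-norm from the quadratic form, and the recovery hinges essentially on Korn's inequality together with the zero boundary condition. A secondary subtlety, recurring in (i), (iii) and (iv), is justifying that the relevant gradients remain in the fixed neighborhood of $SO(d)$ where $D^2$ and $W$ are $C^3$; this is exactly what the uniform $L^\infty$-rigidity estimate of Lemma~\ref{lemma:rigidity}(ii) secures for $\delta$ sufficiently small, and it is what converts every cubic Taylor remainder into a term of order $\delta^\alpha$.
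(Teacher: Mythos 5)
Your overall strategy coincides with the paper's own proof: for (i) the paper performs the same Taylor expansion of $D^2$ at the diagonal point $(\nabla y_0,\nabla y_0)$ (expanding jointly and invoking Lemma \ref{D-lin}(i), whereas you expand in the first argument only --- an equivalent computation, and your justification that the first-order term vanishes at a minimizer is the right one), with the $C^3$-regularity from \eqref{eq: assumptions-D}(iv) and the $L^\infty$-rigidity of Lemma \ref{lemma:rigidity}(ii) controlling the remainder; for (ii)--(iv) the paper likewise compares $H_{\nabla y_0}$ with $\C_D$ via $\Vert H_{\nabla y_0}-\C_D\Vert_{L^\infty(\Omega)}\le C\delta^\alpha$ and closes with Korn's and Poincar\'e's inequalities using the zero boundary data.

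Two steps in your write-up need repair. First, in (ii) the asserted pointwise coercivity $H_Y[\Delta,\Delta]\ge c\,|{\rm sym}\,\Delta|^2$ ``uniformly for $Y$ near $SO(d)$'' is false: by the separate frame indifference \eqref{eq: assumptions-D}(v) one has $D(e^{tA}Y,Y)\equiv 0$ for $A\in{\rm Skew}(d)$, so the kernel of $H_Y$ contains $\lbrace AY : A \in {\rm Skew}(d)\rbrace$; for $Y\neq \Id$ (in particular for $Y=Q\in SO(d)$, where $H_Q[\Delta,\Delta]=\C_D[{\rm sym}(Q^\top\Delta),{\rm sym}(Q^\top\Delta)]$) these kernel directions have nonvanishing symmetric part, so the claimed inequality fails along them. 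What continuity actually yields is the comparison $\Vert H_{\nabla y_0}-\C_D\Vert_{L^\infty(\Omega)}\le C\delta^\alpha$ --- the very estimate you use in (iii) --- giving only the integrated bound $\int_\Omega H_{\nabla y_0}[\nabla(y_1-y_0),\nabla(y_1-y_0)]\ge c\Vert e(y_1-y_0)\Vert^2_{L^2(\Omega)}-C\delta^\alpha\Vert\nabla(y_1-y_0)\Vert^2_{L^2(\Omega)}$, in which the $\delta^\alpha$-error (together with the cubic one) can be absorbed \emph{only after} applying Korn's inequality; this is precisely the order of operations in the paper's display \eqref{eq:NNN}, and your proof should be rearranged accordingly. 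Second, in (iii) and (iv) the $L^\infty$-rigidity alone does not deliver the absolute bound $C\delta^\alpha$: after dividing by $\delta^2$ the remainders are of size $\delta^\alpha\Vert\nabla(u_1-u_0)\Vert^2_{L^2(\Omega)}$ and $\delta^{\alpha-2}\Vert\nabla y-\Id\Vert^2_{L^2(\Omega)}$, respectively, and one must additionally invoke the energy-based $L^2$-estimate $\Vert\nabla y_i-\Id\Vert^2_{L^2(\Omega)}\le C\phi_\delta(y_i)\le CM\delta^2$ (the paper's \eqref{eq:remD}, which follows from Lemma \ref{lemma:rigidity}(i) and \eqref{assumptions-W}(iii)) to conclude that $\Vert\nabla u_i\Vert_{L^2(\Omega)}$ is uniformly bounded. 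Both fixes are within the toolkit you already cite, but as written these two points are gaps.
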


\Proof
 Recall that $D^2$ is $C^3$ in a neighborhood of $(\Id,\Id)$. In view of the uniform bound on $\nabla y_0, \nabla y_1 $ (see Lemma \ref{lemma:rigidity}(ii)) and a Taylor expansion of $D^2$ at $(\nabla y_0, \nabla y_0)$, we derive by Lemma \ref{D-lin}
\begin{align*}
\begin{split}
\int_\Omega D^2(\nabla y_0,\nabla y_1) &= \int_\Omega H_{\nabla y_0}[\nabla (y_1 -  y_0),\nabla (y_1 -   y_0) ]  + O( \Vert \nabla (y_1-   y_0) \Vert^3_{L^3(\Omega)}).
\end{split}
\end{align*}
This gives (i).  We obtain $\Vert H_{\nabla y_0} - \C_D \Vert_{L^\infty(\Omega)} \le C\delta^\alpha$ by regularity of $D$ and Lemma \ref{lemma:rigidity}(ii). This together with (i), Lemma \ref{lemma:rigidity}(ii), and Lemma \ref{D-lin} yields  
\begin{align}\label{eq:NNN}
\begin{split}
\int_\Omega D^2(\nabla y_0,\nabla y_1)& = \int_\Omega \C_D[e(y_1)-e(y_0),e(y_1)-e(y_0)]\\& \ \ \  + O(\delta^\alpha \Vert \nabla y_1- \nabla y_0 \Vert^2_{L^2(\Omega)}).
\end{split}
\end{align}
Now by    \eqref{eq:NNN},  Lemma \ref{D-lin}(iii), and Korn's inequality we derive  for $\delta$ small enough
  \begin{align*}
\int_\Omega D^2(\nabla y_0,\nabla y_1) & \ge c \Vert e(y_1)-e(y_0) \Vert^2_{L^2(\Omega)} + O( \delta^\alpha\Vert \nabla y_1- \nabla y_0 \Vert^2_{L^2(\Omega)}) \\& \ge c  \Vert \nabla y_1- \nabla y_0 \Vert^2_{L^2(\Omega)}.
\end{align*}
Here we used that $y_1 - y_0 = 0$ on $\partial \Omega$. The first inequality in (ii) follows from Poincar\'e's inequality. The other inequality can be seen along similar lines. By Lemma \ref{lemma:rigidity}(i),  \eqref{assumptions-W}(iii) and the fact that $y_0,y_1 \in \mathscr{S}_\delta^M$ we get
\begin{align}\label{eq:remD}
\Vert \nabla y_i - \Id \Vert^2_{L^2(\Omega)} \le C\Vert \dist(\nabla y_i,SO(d)) \Vert^2_{L^2(\Omega)} \le C\phi_\delta(y_i)  \le CM\delta^2  
\end{align}
for $i=0,1$. Recalling the definition of $\mathcal{D}_\delta, \bar{\mathcal{D}}_0$, we now obtain (iii) by \eqref{eq:NNN}.

Finally, to see (iv), an argument very similar to (i), essentially relying on a Taylor expansion and Lemma \ref{lemma: metric space-properties}(ii), yields
$$\Big|\delta^{-2} \int_\Omega W(\nabla y) - \int_\Omega \frac{1}{2}\C_W[e (u),e (u) ]\Big| \le C\delta^{\alpha-2} \Vert \nabla y - \Id \Vert^2_{L^2(\Omega)},$$
which together with \eqref{eq:remD} implies the claim.  \eop

We close this section with proving differentiablity of  $\int_\Omega W(\nabla y)$.

\begin{lemma}[Differentiablity of $\int_\Omega W(\nabla y)$]\label{lemma:C1}
For $(y_n)_n \subset \mathscr{S}_\delta^M$ and $y \in \mathscr{S}_\delta^M$ with $\mathcal{D}_\delta(y_n,y) \to 0$, we have
$$\lim_{n \to \infty}  \frac{\int_\Omega W(\nabla y_n) - \int_\Omega W(\nabla y) - \int_\Omega \partial_FW(\nabla y) : (\nabla y_n - \nabla y)}{\mathcal{D}_\delta(y_n,y)} = 0.$$
\end{lemma}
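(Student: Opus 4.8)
The plan is to read this as a first-order Taylor expansion statement: the map $y \mapsto \int_\Omega W(\nabla y)$ is differentiable at $y$ along the metric $\mathcal{D}_\delta$, with derivative $\varphi \mapsto \int_\Omega \partial_F W(\nabla y):\nabla\varphi$, and the remainder is \emph{quadratic} in the increment $\nabla y_n - \nabla y$. Since the denominator carries only a single power of $\mathcal{D}_\delta$, a quadratic numerator will force the quotient to zero once the increment and the metric are seen to be comparable. The whole argument therefore reduces to (a) a uniform quadratic remainder estimate and (b) the metric equivalence already recorded in Lemma \ref{lemma: metric space-properties}(ii).

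The first step, and the only genuinely delicate one, is to make the Taylor remainder uniform. By the rigidity estimate Lemma \ref{lemma:rigidity}(ii), for $\delta$ small every $y \in \mathscr{S}_\delta^M$ satisfies $\Vert \nabla y - \Id \Vert_{L^\infty(\Omega)} \le C\delta^\alpha$, so for a.e.\ $x$ both $\nabla y(x)$ and $\nabla y_n(x)$ lie in the closed ball $\bar B(\Id, C\delta^\alpha)$. For $\delta$ small this ball is convex and contained in the neighborhood of $SO(d)$ on which $W$ is $C^3$ by \eqref{assumptions-W}(i); in particular the Hessian $\partial^2_{F^2} W$ is bounded by a constant $C$ there. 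This confinement of all competitors to a small convex neighborhood of $\Id$ is what prevents the remainder constant from blowing up, and it is supplied entirely by the rigidity lemma.

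With this in hand I would apply, for a.e.\ $x$, Taylor's theorem with integral remainder to $W$ along the segment from $\nabla y(x)$ to $\nabla y_n(x)$, which stays inside the convex ball above:
$$ W(\nabla y_n) - W(\nabla y) - \partial_F W(\nabla y):(\nabla y_n - \nabla y) = \int_0^1 (1-s)\,\partial^2_{F^2} W\big(\nabla y + s(\nabla y_n - \nabla y)\big)[\nabla y_n - \nabla y,\nabla y_n - \nabla y]\,ds. $$
The bound on $\partial^2_{F^2} W$ gives the pointwise estimate $|\cdot| \le C|\nabla y_n - \nabla y|^2$, and integrating over $\Omega$ yields
$$ \Big| \int_\Omega W(\nabla y_n) - \int_\Omega W(\nabla y) - \int_\Omega \partial_F W(\nabla y):(\nabla y_n - \nabla y) \Big| \le C\Vert \nabla y_n - \nabla y \Vert^2_{L^2(\Omega)}. $$
(All three integrals are finite since $W$ and $\partial_F W$ are bounded on the relevant compact set and $\nabla y_n - \nabla y \in L^2(\Omega)$.)

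Finally I would convert the $L^2$-bound into a bound by $\mathcal{D}_\delta$ via Lemma \ref{lemma: metric space-properties}(ii): since $\Vert \nabla y_n - \nabla y \Vert_{L^2(\Omega)} \le \Vert y_n - y \Vert_{H^1(\Omega)} \le c^{-1}\delta\,\mathcal{D}_\delta(y_n,y)$, the quotient in the statement is controlled by
$$ \frac{C\Vert \nabla y_n - \nabla y \Vert^2_{L^2(\Omega)}}{\mathcal{D}_\delta(y_n,y)} \le C c^{-1}\delta\, \mathcal{D}_\delta(y_n,y), $$
where one factor $\Vert \nabla y_n - \nabla y \Vert_{L^2}$ cancels the denominator and the remaining factor is $O(\mathcal{D}_\delta(y_n,y))$. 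As $\mathcal{D}_\delta(y_n,y) \to 0$ by hypothesis, the right-hand side tends to $0$, which is exactly the claim.
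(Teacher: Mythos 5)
Your proof is correct and follows essentially the same route as the paper's: confine $\nabla y_n,\nabla y$ to the ball $\bar B(\Id,C\delta^\alpha)$ via Lemma \ref{lemma:rigidity}(ii), apply a Taylor expansion there to get the pointwise quadratic remainder bound $C'|\nabla y_n-\nabla y|^2$, and convert to $\mathcal{D}_\delta$ via Lemma \ref{lemma: metric space-properties}(ii). (Your final displayed bound should read $Cc^{-2}\delta^2\,\mathcal{D}_\delta(y_n,y)$ rather than $Cc^{-1}\delta\,\mathcal{D}_\delta(y_n,y)$, a harmless bookkeeping slip that does not affect the conclusion.)
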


\Proof
By a Taylor expansion we find a universal constant $C'>0$ such that  $|W(F_2) - W(F_1) - \partial_F W(F_1) : (F_2 - F_1)| \le C'|F_1 - F_2|^2$ for all $F_1,F_2$ with $|F_1 - \Id|,|F_2-\Id| \le C\delta^\alpha$, where  $C$ is the constant in Lemma \ref{lemma:rigidity}(ii). This together with Lemma \ref{lemma:rigidity}(ii) and Lemma \ref{lemma: metric space-properties}(ii) gives the result. \eop

\subsection{Metric spaces and convexity}\label{sec: metric}

In this section we show that $(\mathscr{S}^M_\delta, \mathcal{D}_\delta)$, $(H^1_0(\Omega),\bar{\mathcal{D}}_0)$ are complete metric spaces and derive convexity properties for the energies and dissipation distances. 

\begin{theorem}[Properties of $(\mathscr{S}^M_\delta, \mathcal{D}_\delta)$ and $\phi_\delta$]\label{th: metric space}
For $\delta>0$ small enough we have
\begin{itemize}
\item[(i)] $(\mathscr{S}^M_\delta, \mathcal{D}_\delta)$ is a complete metric space.
\item[(ii)] Compactness: If $(y_n)_n \subset \mathscr{S}^M_\delta$, then $(y_n)_n$ admits a subsequence converging weakly in $W^{2,p}(\Omega)$, strongly in  $W^{1,\infty}(\Omega)$, and with respect to $\mathcal{D}_\delta$.
\item[(iii)] Lower semicontinuity: $\mathcal{D}_\delta(y_n,y) \to 0$ \   \ $\Rightarrow$   \ \ $\liminf_{n \to \infty} \phi_\delta(y_n) \ge \phi_\delta(y)$.
\end{itemize}
\end{theorem}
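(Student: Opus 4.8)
The plan is to verify the three properties in turn, leaning on the estimates already established in Lemma~\ref{lemma: metric space-properties} and Lemma~\ref{lemma:rigidity}. The key observation is that by Lemma~\ref{lemma: metric space-properties}(ii) the metric $\mathcal{D}_\delta$ is equivalent to the $H^1(\Omega)$-norm distance on $\mathscr{S}^M_\delta$, so completeness, compactness, and lower semicontinuity can all be transferred from the (well-understood) Sobolev setting, provided one checks that the limits stay inside the sublevel set $\mathscr{S}^M_\delta$.

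For part (ii), I would start with a sequence $(y_n)_n \subset \mathscr{S}^M_\delta$. Since $\phi_\delta(y_n) \le M$, the definition of $\phi_\delta$ in \eqref{nonlinear energy} together with \eqref{assumptions-P}(iii) gives a uniform bound $\Vert \nabla^2 y_n \Vert_{L^p(\Omega)} \le C\delta^\alpha$, and Lemma~\ref{lemma:rigidity}(ii) bounds $\Vert y_n - \id \Vert_{W^{1,\infty}(\Omega)}$ uniformly; together with the fixed boundary condition $y_n \in W^{2,p}_\id(\Omega)$ this yields a uniform $W^{2,p}(\Omega)$-bound. By reflexivity of $W^{2,p}(\Omega)$ (as $p>1$) I extract a subsequence converging weakly in $W^{2,p}(\Omega)$ to some $y$, and by the compact embedding $W^{2,p}(\Omega) \hookrightarrow\hookrightarrow W^{1,\infty}(\Omega)$ (using $p>d$) the convergence is strong in $W^{1,\infty}(\Omega)$, hence in particular in $H^1(\Omega)$. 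The boundary condition and the bound $\phi_\delta(y) \le M$ pass to the limit (the latter by part (iii)), so $y \in \mathscr{S}^M_\delta$, and then $\mathcal{D}_\delta$-convergence follows from Lemma~\ref{lemma: metric space-properties}(ii).

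For part (iii), strong $W^{1,\infty}(\Omega)$-convergence of $\nabla y_n$ makes the elastic term $\delta^{-2}\int_\Omega W(\nabla y_n)$ converge by continuity of $W$, while the higher-order term $\delta^{-p\alpha}\int_\Omega P(\nabla^2 y_n)$ is lower semicontinuous under weak $W^{2,p}$-convergence because $P$ is convex (see \eqref{assumptions-P}(ii)); the force term is continuous. The only subtlety is that $\mathcal{D}_\delta(y_n,y) \to 0$ a priori controls only the $H^1$-distance, so I must first upgrade to the full $W^{2,p}$/$W^{1,\infty}$ setting: given $\mathcal{D}_\delta$-convergence, I apply the compactness of part~(ii) to a subsequence to obtain a weak-$W^{2,p}$ and strong-$W^{1,\infty}$ limit, identify it with $y$ (since both limits agree in $H^1$), and conclude lower semicontinuity via a subsequence-of-subsequence argument.

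For part (i), the metric axioms for $\mathcal{D}_\delta$ follow directly from the distance axioms \eqref{eq: assumptions-D}(ii),(iii) for $D$ restricted to the symmetric strains, noting that Lemma~\ref{lemma:rigidity}(ii) keeps all gradients $\nabla y$ in the neighborhood of $SO(d)$ where $D$ is a genuine metric on symmetric tensors; positivity $\mathcal{D}_\delta(y_0,y_1)>0$ for $y_0 \neq y_1$ is exactly the lower bound in Lemma~\ref{lemma: metric space-properties}(ii). For completeness, a $\mathcal{D}_\delta$-Cauchy sequence is $H^1$-Cauchy by Lemma~\ref{lemma: metric space-properties}(ii), and the compactness argument of part~(ii) supplies a limit $y \in \mathscr{S}^M_\delta$ to which the sequence converges in $\mathcal{D}_\delta$. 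I expect the main obstacle to be the bookkeeping needed to ensure the limit remains in the sublevel set $\mathscr{S}^M_\delta$ and that the two notions of convergence ($\mathcal{D}_\delta$ and $W^{2,p}$-weak) are reconciled on the same subsequence; this is where parts (ii) and (iii) must be invoked in tandem rather than independently.
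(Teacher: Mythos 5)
Your proposal is correct and follows essentially the same route as the paper: the uniform $W^{2,p}$-bound from the energy and Lemma~\ref{lemma:rigidity} gives weak-$W^{2,p}$/strong-$W^{1,\infty}$ subsequential compactness, lower semicontinuity combines convergence of the $W$-term (you argue by continuity of $W$ near $SO(d)$ where the paper invokes Fatou's lemma, an interchangeable step) with convexity of $P$, and the metric axioms and completeness rest on the equivalence of $\mathcal{D}_\delta$ with the $H^1$-distance from Lemma~\ref{lemma: metric space-properties}(ii). Your closing observation that (ii) and (iii) must be invoked in tandem --- compactness to upgrade the topology in the lower-semicontinuity proof, lower semicontinuity to keep limits inside $\mathscr{S}^M_\delta$ --- matches exactly how the paper orders its argument, so there is no circularity.
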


\Proof   First, recalling \eqref{nonlinear energy} and \eqref{assumptions-P}(iii), we have $\Vert \nabla^2 y \Vert^p_{L^p(\Omega)} \le CM\delta^{p\alpha}$ for all $y \in \mathscr{S}^M_\delta$, which together with Lemma \ref{lemma:rigidity}(ii)   shows  $\sup_{y \in \mathscr{S}^M_\delta}\Vert y \Vert_{W^{2,p}(\Omega)} < \infty$. This implies (ii) recalling $p>d$ and also using Lemma \ref{lemma: metric space-properties}(ii).  In particular, for a sequence $(y_n)_n$ converging to $y$ with respect to $\mathcal{D}_\delta$ we have   $y_n \rightharpoonup y$ weakly in $W^{2,p}(\Omega)$ and $y_n \to y$ strongly in $W^{1,\infty}(\Omega)$. Then (iii) follows from   Fatou's lemma    and the fact that $\liminf_{n\to \infty} \int_\Omega P(\nabla^2 y_n) \ge \int_\Omega P(\nabla^2 y)$ by   \eqref{assumptions-P}(ii).

We now finally show (i). Apart from the positivity, all properties of a metric follow directly from \eqref{eq: assumptions-D} and \eqref{eq: D,D0}. To show that if $\mathcal{D}_\delta(y_0,y_1)=0$ for  $y_0, y_1 \in \mathscr{S}^M_\delta$, then $y_0=y_1$, we apply Lemma \ref{lemma: metric space-properties}(ii). Finally, it remains to show that $(\mathscr{S}^M_\delta, \mathcal{D}_\delta)$ is complete. Let $(y_k)_k$ be a Cauchy sequence with respect to $\mathcal{D}_\delta$. By  (ii) we find $y \in W^{2,p}(\Omega)$ and a subsequence \MK (not relabeled) \EEE such that $y_k \to y$ in $W^{1,\infty}(\Omega)$. Then also $\lim_{k\to \infty}\mathcal{D}_\delta(y_k,y) = 0$ by Lemma \ref{lemma: metric space-properties}(ii). By (iii) we get $y \in \mathscr{S}_\delta^M$. The fact that $(y_k)_k$ is a Cauchy sequence now implies that the whole sequence $y_k$ converges to $y$ with respect to $\mathcal{D}_\delta$. This concludes the proof. \eop

Similar properties can be derived in the linear setting. Recall the definition of $\bar{\mathcal{D}}_0$ in \eqref{eq: D,D0}.

\begin{theorem}[Properties of $(H^1_0(\Omega), \bar{\mathcal{D}}_0)$ and $\bar{\phi}_0$]\label{th: metric space-lin}
We have

\begin{itemize}
\item[(i)] $(H^1_0(\Omega), \bar{\mathcal{D}}_0)$ is a complete metric space.
\item[(ii)] Continuity: $\bar{\mathcal{D}}_0(u_n, u) \to 0$ \ \   $\Rightarrow$ \ \  $\lim_{n \to \infty} \bar{\phi}_0(u_n) = \bar{\phi}_0(u)$.
\end{itemize}
\end{theorem}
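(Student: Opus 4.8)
The plan is to establish the two assertions separately, both relying on the fact that $\bar{\mathcal{D}}_0$ is induced by a genuine inner product on $H^1_0(\Omega)$. First I would observe that by Lemma \ref{D-lin}(iii) the tensor $\C_D$ is positive definite on symmetric matrices, so combined with Korn's inequality (using the zero boundary values $u \in H^1_0(\Omega)$) the bilinear form $(u_0,u_1) \mapsto \int_\Omega \C_D[\nabla u_0 - \nabla u_1, \nabla u_0 - \nabla u_1]$ controls $\Vert \nabla u_0 - \nabla u_1 \Vert^2_{L^2(\Omega)}$ from below and, trivially, from above. Thus $\bar{\mathcal{D}}_0(u_0,u_1)$ is equivalent to $\Vert u_0 - u_1 \Vert_{H^1(\Omega)}$ (via Poincar\'e), exactly as the nonlinear analogue in Lemma \ref{lemma: metric space-properties}(ii). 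In particular $\bar{\mathcal{D}}_0$ is a norm-induced metric on the Hilbert space $H^1_0(\Omega)$.

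For (i), the metric axioms are immediate: symmetry and the triangle inequality follow because $\bar{\mathcal{D}}_0$ arises from a seminorm, and positivity $\bar{\mathcal{D}}_0(u_0,u_1)=0 \Rightarrow u_0 = u_1$ follows from the lower bound just established. Completeness then reduces to completeness of $H^1_0(\Omega)$: any $\bar{\mathcal{D}}_0$-Cauchy sequence is Cauchy in the $H^1(\Omega)$-norm by the equivalence, hence converges in $H^1_0(\Omega)$, and convergence in norm implies $\bar{\mathcal{D}}_0$-convergence by the upper bound. This is entirely standard once the norm equivalence is in hand.

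For (ii), I would use that $\bar{\phi}_0$ as defined in \eqref{linear energy} (recall $f \equiv 0$, so $\bar{\phi}_0(u) = \int_\Omega \frac{1}{2}\C_W[e(u),e(u)]$) is a continuous quadratic form with respect to the $H^1(\Omega)$-norm: writing $\bar{\phi}_0(u_n) - \bar{\phi}_0(u) = \int_\Omega \frac{1}{2}\C_W[e(u_n)-e(u), e(u_n)+e(u)]$ and using boundedness of $\C_W$ together with Cauchy-Schwarz shows $|\bar{\phi}_0(u_n) - \bar{\phi}_0(u)| \le C \Vert e(u_n) - e(u) \Vert_{L^2(\Omega)} (\Vert e(u_n)\Vert_{L^2(\Omega)} + \Vert e(u)\Vert_{L^2(\Omega)})$. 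Since $\bar{\mathcal{D}}_0(u_n,u) \to 0$ forces $\Vert u_n - u \Vert_{H^1(\Omega)} \to 0$ by the norm equivalence, the right-hand side tends to $0$, giving continuity. Note that here we obtain full continuity rather than mere lower semicontinuity (as in the nonlinear Theorem \ref{th: metric space}(iii)), precisely because the linear energy is a quadratic form on the Hilbert space and no higher-order perturbation term with only lower-semicontinuous behaviour is present.

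The main point, and the only genuinely non-routine ingredient, is the coercivity of $\bar{\mathcal{D}}_0$, i.e.\ the application of Korn's inequality to convert control of the symmetrized strain $e(u_0) - e(u_1)$ into control of the full gradient $\nabla u_0 - \nabla u_1$; everything else is a direct consequence of the Hilbert space structure. This step mirrors the nonlinear estimate already carried out in the proof of Lemma \ref{lemma: metric space-properties}(ii), so the argument is short.
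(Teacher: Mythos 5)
Your proposal is correct and follows essentially the same route as the paper: the paper's proof likewise combines Lemma \ref{D-lin}(iii) with Korn's and Poincar\'e's inequalities to show $\bar{\mathcal{D}}_0(u_0,u_1)^2 \ge c\Vert e(u_0)-e(u_1)\Vert^2_{L^2(\Omega)} \ge c\Vert u_0-u_1\Vert^2_{H^1(\Omega)}$, deduces equivalence of $\bar{\mathcal{D}}_0$ with the $H^1(\Omega)$-norm (hence completeness), and then reads off continuity of $\bar{\phi}_0$ from \eqref{linear energy}. You merely spell out the details the paper leaves implicit, namely the trivial upper bound, the seminorm structure behind the metric axioms, and the Cauchy--Schwarz estimate for the quadratic form in part (ii).
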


\Proof By Lemma \ref{D-lin}(iii) we find a constant $c>0$ such that 
$$\bar{\mathcal{D}}_0(u_0,u_1)^2 \ge c \Vert e(u_0) - e(u_1) \Vert_{L^2(\Omega)}^2 \ge \Vert u_0 - u_1 \Vert^2_{H^1(\Omega)},  $$ 
where the last step follows from Korn's and Poincare's inequality. This show that $(H^1_0(\Omega), \bar{\mathcal{D}}_0)$ is a complete metric space, where $\bar{\mathcal{D}}_0$ is equivalent to the metric induced by $\Vert \cdot \Vert_{H^1(\Omega)}$. Recalling \eqref{linear energy} we find that $\bar{\phi}_0$ is  continuous with respect to $\bar{\mathcal{D}}_0$. \eop

The following  properties are crucial to use the theory in \cite{AGS}. 

\begin{theorem}[Convexity and generalized geodesics in the nonlinear setting]\label{th: convexity}
There is a constant $C \ge 1$ independent of $\delta$ such that for $\delta$ small and for all $y_0,y_1 \in \mathscr{S}^M_\delta$:  
\begin{align*}
(i)& \ \    \mathcal{D}_\delta(y_s,y_0)^2  \le  s^2\mathcal{D}_\delta(y_1,y_0)^2  (1 + C \Vert \nabla y_1 - \nabla y_0 \Vert_{L^\infty(\Omega)}), \\
(ii) & \ \ \phi_\delta(y_s)  \le (1-s) \phi_\delta(y_0) + s\phi_\delta(y_1),
\end{align*}
where $y_s := (1-s) y_0 + sy_1$, $s \in [0,1]$.

\end{theorem}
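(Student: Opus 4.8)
The plan is to treat the two assertions separately: (ii) via a second-derivative convexity computation along the segment, and (i) via the quadratic Taylor expansion of the dissipation already recorded in Lemma \ref{lemma: metric space-properties}(i). A preliminary observation is used throughout. Since $y_0,y_1\in\mathscr{S}^M_\delta$ both satisfy $\Vert\nabla y_i-\Id\Vert_{L^\infty(\Omega)}\le C\delta^\alpha$ by Lemma \ref{lemma:rigidity}(ii), and $\nabla y_s=(1-s)\nabla y_0+s\nabla y_1$ is a convex combination, the whole segment stays in the same neighborhood, $\Vert\nabla y_s-\Id\Vert_{L^\infty(\Omega)}\le C\delta^\alpha$ for all $s\in[0,1]$; no membership $y_s\in\mathscr{S}^M_\delta$ is needed. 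Consequently all gradients appearing below lie where $W$ and $D^2$ are $C^3$, so that the relevant Hessians differ from $\C_W$, respectively from $H_{\nabla y_0}$ and $\C_D$, only by $O(\delta^\alpha)$. Writing $w:=y_1-y_0\in W^{2,p}_0(\Omega)$, note that $w$ has zero trace, so Korn's inequality applies.

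For (ii): the perturbation term is immediate, since $P$ is convex by \eqref{assumptions-P}(ii) and $\nabla^2 y_s$ is affine in $s$, whence $s\mapsto\int_\Omega P(\nabla^2 y_s)$ is convex. For the elastic part I would set $G(s)=\int_\Omega W(\nabla y_s)$; as $W\in C^3$ near $\Id$ and $\nabla y_s$ stays uniformly close to $\Id$, $G\in C^2([0,1])$ and one may differentiate under the integral to obtain $G''(s)=\int_\Omega\partial^2_{F^2}W(\nabla y_s)[\nabla w,\nabla w]$. Splitting $\partial^2_{F^2}W(\nabla y_s)=\C_W+(\partial^2_{F^2}W(\nabla y_s)-\C_W)$ and using $\C_W[\nabla w,\nabla w]=\C_W[e(w),e(w)]$ (frame indifference), Lemma \ref{D-lin}(iii), and Korn's inequality for $w\in W^{1,2}_0(\Omega)$, the leading term is bounded below by $c\Vert\nabla w\Vert^2_{L^2(\Omega)}$, while the remainder is at most $C\delta^\alpha\Vert\nabla w\Vert^2_{L^2(\Omega)}$ by the $C^3$ regularity and $\Vert\nabla y_s-\Id\Vert_{L^\infty(\Omega)}\le C\delta^\alpha$. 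Hence $G''(s)\ge(c-C\delta^\alpha)\Vert\nabla w\Vert^2_{L^2(\Omega)}\ge0$ for $\delta$ small, so $G$ is convex on $[0,1]$; together with the convex $P$-term and the scalings in \eqref{nonlinear energy} this gives (ii).

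For (i): applying Lemma \ref{lemma: metric space-properties}(i) to the pair $(y_0,y_s)$ and to $(y_0,y_1)$, and using $\nabla(y_s-y_0)=s\nabla w$, I get $\delta^2\mathcal{D}_\delta(y_s,y_0)^2\le s^2 I+Cs^3\Vert\nabla w\Vert^3_{L^3(\Omega)}$ and $I\le\delta^2\mathcal{D}_\delta(y_1,y_0)^2+C\Vert\nabla w\Vert^3_{L^3(\Omega)}$, where $I:=\int_\Omega H_{\nabla y_0}[\nabla w,\nabla w]$. The cubic remainder is handled by the interpolation $\Vert\nabla w\Vert^3_{L^3(\Omega)}\le\Vert\nabla w\Vert_{L^\infty(\Omega)}\Vert\nabla w\Vert^2_{L^2(\Omega)}$, and $\Vert\nabla w\Vert^2_{L^2(\Omega)}$ is absorbed into $\delta^2\mathcal{D}_\delta(y_1,y_0)^2$ by Lemma \ref{lemma: metric space-properties}(ii). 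Combining these estimates, using $s\le1$ and the smallness $\Vert\nabla w\Vert_{L^\infty(\Omega)}\le C\delta^\alpha$ to absorb the lower-order contributions, produces the multiplicative factor $(1+C\Vert\nabla y_1-\nabla y_0\Vert_{L^\infty(\Omega)})$, which is exactly (i).

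The main obstacle is the genuine non-convexity of $W$ near $\Id$: frame indifference forces $\C_W$ to vanish on the skew-symmetric directions, so $F\mapsto W(F)$ is not convex in any neighborhood of $\Id$ and the naive pointwise convexity inequality fails. The decisive point — and the reason the statement nevertheless holds — is that the Dirichlet condition $w\in W^{1,2}_0(\Omega)$ lets Korn's inequality promote the symmetric-part coercivity of $\C_W$ (and of $\C_D$/$H_{\nabla y_0}$) to full-gradient coercivity, which then dominates the $O(\delta^\alpha)$ error arising from the non-constancy of the Hessian along the segment. Verifying that this competition is won for $\delta$ small, namely that the Korn-plus-coercivity constant beats the Hessian's modulus of continuity times $\delta^\alpha$, is the delicate step common to both parts.
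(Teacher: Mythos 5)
Your proposal is correct and follows essentially the same route as the paper: part (i) is verbatim the paper's argument (the two Taylor expansions from Lemma \ref{lemma: metric space-properties}(i), the interpolation $\Vert \nabla w \Vert_{L^3(\Omega)}^3 \le \Vert \nabla w \Vert_{L^\infty(\Omega)} \Vert \nabla w \Vert_{L^2(\Omega)}^2$, and absorption via Lemma \ref{lemma: metric space-properties}(ii)), and part (ii) rests on the identical mechanism --- $\partial^2_{F^2}W = \C_W + O(\delta^\alpha)$ on the $C\delta^\alpha$-neighborhood of $\Id$, coercivity of $\C_W$ on symmetric matrices (Lemma \ref{D-lin}(iii)), and Korn's inequality for the zero-trace difference $w=y_1-y_0$ --- merely phrased as $G''(s)\ge (c-C\delta^\alpha)\Vert \nabla w\Vert_{L^2(\Omega)}^2$ instead of the paper's cosmetically different decomposition $W = \frac12 \C_W[\cdot,\cdot] + \omega$ with the semiconvexity correction $\Vert \partial^2_{F^2}\omega \Vert_{L^\infty(B_{2C\delta^\alpha}(\Id))} \le C\delta^\alpha$. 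Your explicit remark that $y_s$ need not lie in $\mathscr{S}^M_\delta$ and that only the $L^\infty$-proximity of $\nabla y_s$ to $\Id$ (inherited by convex combination) is used is a correct reading of a step the paper leaves implicit.
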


Note that $y_s$ is not a geodesic in the sense of \cite[Definition 2.4.2]{AGS}, but $y_s$ can be understood as a generalized geodesic. We also refer to \cite[Section 3.2, Section 3.4]{MOS} for a discussion about generalized geodesics in a related setting.

\Proof Let $y_s = (1-s)y_0 + sy_1$. By Lemma \ref{lemma: metric space-properties}(i) we obtain
\begin{align*}
\delta^2\mathcal{D}_\delta(y_1,y_0)^2 &\ge \int_\Omega H_{\nabla y_0}[\nabla (y_1-y_0),\nabla (y_1-y_0)] - C\int_\Omega|\nabla y_1 - \nabla y_0|^3.  
\end{align*}
Likewise, we get 
   \begin{align*}
\delta^2\mathcal{D}_\delta(y_s,y_0)^2 & \le s^2\int_\Omega H_{\nabla y_0}[\nabla (y_1-y_0),\nabla (y_1-y_0)]+ Cs^3\int_\Omega|\nabla y_1 - \nabla y_0|^3.
\end{align*}
Combining the  two estimates, we therefore obtain 
  \begin{align*}
\mathcal{D}_\delta(y_s,y_0)^2 & \le s^2\big( \mathcal{D}_\delta(y_1,y_0)^2 + C\delta^{-2}\Vert \nabla y_1 - \nabla y_0 \Vert_{L^3(\Omega)}^3\big),
\end{align*}
which together with Lemma \ref{lemma: metric space-properties}(ii) shows (i). To see (ii),   it suffices to show  $\int_\Omega W(\nabla y_s) \le (1-s)\int_\Omega W(\nabla y_0) + s \int_\Omega W(\nabla y_1)$ since $P$ is convex (see \eqref{assumptions-P}(ii)). A Taylor expansion gives $\int_\Omega W(\nabla y) = \frac{1}{2}\int_\Omega \C_W[\nabla y,\nabla y] + \omega(\nabla y)$ for a (regular) function $\omega:  \R^{d \times d} \to \R$  with $\partial_F \omega(0) = 0$ and  $\partial^2_{F^2} \omega(0) =0$. We get 
\begin{align}\label{quadratic convexity}
\begin{split}
 \int_\Omega \C_W[\nabla y_s,\nabla y_s]    &= (1-s)  \int_\Omega \C_W[\nabla y_0,\nabla y_0] + s    \int_\Omega \C_W[\nabla y_1,\nabla y_1]  \\ & \ \ \     - s(1-s)   \int_\Omega \C_W[\nabla (y_1 - y_0),\nabla (y_1 - y_0)]. 
 \end{split}
 \end{align}
Denote by  $B_{2C\delta^\alpha}(\Id) \subset \R^{d \times d}$  the ball with center $\Id$ and radius $2C\delta^\alpha$ with the constant $C$ from Lemma \ref{lemma:rigidity}(ii). Since $F \mapsto \omega(F) + \frac{1}{2}\Vert \partial^2_{F^2} \omega \Vert_{L^\infty(B_{2C\delta^\alpha}(\Id))}|F|^2$ is convex on $B_{2C\delta^\alpha}(\Id)$, we get by  Lemma \ref{lemma:rigidity}(ii)
   \begin{align*}
\int_\Omega \omega(\nabla y_s ) & \le s\int_\Omega \omega (\nabla y_0) + (1-s) \int_\Omega \omega(\nabla y_1 ) \\& \ \ \ + \frac{1}{2}s(1-s) \Vert \partial^2_{F^2} \omega\Vert_{L^\infty(B_{2C\delta^\alpha}(\Id))} \int_\Omega|\nabla y_1 - \nabla y_0|^2.\notag
\end{align*}
By    the fact that $\partial^2_{F^2 }w(0) =0$ and the regularity of $\omega$  we find $\Vert \partial^2_{F^2 } \omega \Vert_{L^\infty(B_{2C\delta^\alpha}(\Id))} \le C\delta^\alpha$.  Combining the previous  three  estimates and recalling that  $\int_\Omega W(\nabla y) = \frac{1}{2}\int_\Omega \C_W[\nabla y,\nabla y] + \omega(\nabla y)$,  we conclude 
   \begin{align*}
&\int_\Omega W(\nabla y_s) - (1-s)\int_\Omega W(\nabla y_0) - s\int_\Omega W(\nabla y_1) \\ & \le  - s(1-s)  \int_\Omega \C_W[\nabla (y_1 - y_0),\nabla (y_1 - y_0)] +\frac{1}{2}s(1-s) C\delta^\alpha  \int_\Omega|\nabla (y_1 -  y_0)|^2 \le 0
\end{align*}
for $\delta$ small enough, where the last step follows from Lemma \ref{D-lin}(iii) and Korn's inequality. \eop

We note without proof that by a similar reasoning as in (ii) one can show that  for given  $w \in \mathscr{S}^M_\delta$ 
$$\mathcal{D}_\delta(y_s,w)^2  \le  (1-s)\mathcal{D}_\delta(y_0,w)^2 + s\mathcal{D}_\delta(y_1,w)^2 - s(1-s)(1 - C\delta^\alpha)\mathcal{D}_\delta(y_1,y_0)^2.$$
This implies that $\mathcal{D}_\delta$ is $2(1-C\delta^\alpha)$-convex in the sense of \cite[Assumption 4.0.1]{AGS}. Note that this property is not strong enough to apply directly the results in \cite[Section 2.4, Section 4]{AGS}. \BBB Nevertheless, we will be able to derive representations and lower semicontinuity properties  for the slopes  by direct computations (see Lemma \ref{lemma: slopes}, Lemma \ref{lemma: lsc-slope} below.) \EEE However, in the linear setting we obtain $2$-convexity as the following result shows.

\begin{lemma}[Convexity in the linear setting]\label{lemma: convexity2}
For all $u_0,u_1 \in H^1_0(\Omega)$ and $v \in H^1_0(\Omega)$ with $u_s := (1-s) u_0 + su_1$ we have
$$\bar{\mathcal{D}}_0(u_s,v)^2  \le  (1-s) \bar{\mathcal{D}}_0(u_0,v)^2 + s \bar{\mathcal{D}}_0(u_1,v)^2 - s(1-s) \bar{\mathcal{D}}_0(u_1,u_0)^2.$$
\end{lemma}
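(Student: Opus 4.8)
The plan is to recognize that $\bar{\mathcal{D}}_0$ is nothing but the distance induced by a genuine semi-inner product on $H^1_0(\Omega)$, so that the asserted inequality is in fact the standard parallelogram-type identity for Hilbert (semi)norms and holds \emph{with equality}. To make this precise I would introduce the symmetric bilinear form
$$\langle u,w\rangle_{\C_D} := \int_\Omega \C_D[\nabla u,\nabla w]$$
on $H^1_0(\Omega)$. Its symmetry, $\langle u,w\rangle_{\C_D} = \langle w,u\rangle_{\C_D}$, follows from the fact that $\C_D = \tfrac{1}{2}\partial^2_{F_1^2}D^2(\Id,\Id)$ is a Hessian (see Lemma \ref{D-lin}(ii)) and hence a fourth-order tensor that is symmetric under exchange of its two matrix arguments; its positive definiteness on $H^1_0(\Omega)$ follows from Lemma \ref{D-lin}(iii) together with Korn's and Poincar\'e's inequalities, as already recorded in the proof of Theorem \ref{th: metric space-lin}. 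In this notation the definition \eqref{eq: D,D0} reads $\bar{\mathcal{D}}_0(u,w)^2 = \langle u-w,u-w\rangle_{\C_D}$, i.e. $\bar{\mathcal{D}}_0$ is the distance associated to the norm $\Vert \cdot \Vert_{\C_D} := \langle \cdot,\cdot\rangle_{\C_D}^{1/2}$.

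First I would set $a := u_0 - v$ and $b := u_1 - v$. Since the map $u \mapsto u_s$ is affine, linearity gives $u_s - v = (1-s)(u_0-v) + s(u_1-v) = (1-s)a + sb$, and moreover $u_1 - u_0 = b - a$. Expanding the left-hand side by bilinearity and symmetry of $\langle \cdot,\cdot\rangle_{\C_D}$ yields
$$\bar{\mathcal{D}}_0(u_s,v)^2 = \Vert (1-s)a + sb \Vert_{\C_D}^2 = (1-s)^2 \Vert a\Vert_{\C_D}^2 + 2s(1-s)\langle a,b\rangle_{\C_D} + s^2 \Vert b\Vert_{\C_D}^2.$$
Then I would expand the right-hand side of the claim in the same way, using $\Vert b-a\Vert_{\C_D}^2 = \Vert a\Vert_{\C_D}^2 - 2\langle a,b\rangle_{\C_D} + \Vert b\Vert_{\C_D}^2$, and collect the coefficients of $\Vert a\Vert_{\C_D}^2$, $\Vert b\Vert_{\C_D}^2$, and $\langle a,b\rangle_{\C_D}$. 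A short computation shows these coefficients are exactly $(1-s)^2$, $s^2$, and $2s(1-s)$, so that the right-hand side coincides term by term with the expression above. Hence both sides agree and the claimed inequality holds as an equality.

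I expect no genuine obstacle here: the statement is an algebraic identity for a quadratic form, and in contrast to the nonlinear Theorem \ref{th: convexity} there is no remainder term to control, precisely because $\bar{\mathcal{D}}_0$ is \emph{exactly} quadratic rather than quadratic up to a $C\delta^\alpha$ error. The only point deserving a line of justification is the symmetry of $\C_D$, which guarantees that the cross term $\langle a,b\rangle_{\C_D}$ is well defined and symmetric so that the expansions match; this is immediate from its definition as a Hessian. I would close by remarking that this $2$-convexity (indeed $2$-geodesic-convexity along linear interpolants, which here are the geodesics for $\bar{\mathcal{D}}_0$) is exactly the hypothesis required to invoke Theorem \ref{th: auxiliary2} in the linear setting.
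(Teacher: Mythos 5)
Your proposal is correct and takes essentially the same approach as the paper: the paper's proof consists of exactly the elementary expansion you perform, citing the quadratic-form identity \eqref{quadratic convexity} and the fact that $\bar{\mathcal{D}}_0^2$ is quadratic, so your detailed bilinear-form computation (including the observation that the inequality holds with equality, with no remainder term as in Theorem \ref{th: convexity}) is just the worked-out version of the paper's one-line argument. Your closing remarks on the symmetry of $\C_D$ and on $2$-convexity enabling Theorem \ref{th: auxiliary2} match how the paper uses the lemma.
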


\Proof The property follows from an elementary computation as in \eqref{quadratic convexity} taking into account that   $\bar{\mathcal{D}}_0^2$ is quadratic. \eop

\subsection{Properties of local slopes}\label{sec: slopes}

We now derive representations and properties of the slopes corresponding to $\phi_\delta$ and $\bar{\phi}_0$. Recall Definition \ref{main def2}.

\begin{lemma}[Slopes]\label{lemma: slopes}
(i) For $\delta>0$ small enough the local slopes in the nonlinear setting admit the representation
\begin{align*}
&|\partial \phi_\delta|_{\mathcal{D}_\delta}(y) = \sup_{w \neq y} \  \frac{(\phi_\delta(y) - \phi_\delta(w))^+}{\mathcal{D}_\delta(y,w) (1 + C \Vert \nabla y - \nabla w \Vert_{L^\infty(\Omega)})^{1/2}} \ \ \ \  \forall y \in \mathscr{S}_\delta^M,
\end{align*}
 where $C$ is the constant from Theorem \ref{th: convexity}. The slopes are  lower semicontinuous with respect to both  $H^1(\Omega)$ and $\mathcal{D}_\delta$ and are strong upper gradients for $\phi_\delta$.

(ii) The local slope for the linear energy $\bar{\phi}_0$ admits the representation 
$$ |\partial \bar{\phi}_0|_{\bar{\mathcal{D}}_0}(u) = \sup_{v \neq u} \  \frac{(\bar{\phi}_0(u) - \bar{\phi}_0(v))^+}{\bar{\mathcal{D}}_0(u,v)},$$
 and is a strong upper gradient for $\bar{\phi}_0$.  
\end{lemma}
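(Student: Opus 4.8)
The plan is to establish the representation formula first and then to read off lower semicontinuity and the upper gradient property from it. I treat the nonlinear case (i) in detail; the linear case (ii) is the same argument with the correction factor deleted. Throughout, write $S(y)$ for the right-hand side of the asserted identity and, for $w \neq y$,
\[ G_w(y) := \frac{(\phi_\delta(y) - \phi_\delta(w))^+}{\mathcal{D}_\delta(y,w)\,(1 + C \Vert \nabla y - \nabla w \Vert_{L^\infty(\Omega)})^{1/2}}, \]
so that $S(y) = \sup_{w \neq y} G_w(y)$. For the inequality $|\partial \phi_\delta|_{\mathcal{D}_\delta}(y) \le S(y)$ I note that by Lemma~\ref{lemma: metric space-properties}(ii) the metric $\mathcal{D}_\delta$ is equivalent to $\Vert \cdot \Vert_{H^1(\Omega)}$ on $\mathscr{S}_\delta^M$, and by the compactness in Theorem~\ref{th: metric space}(ii) any $\mathcal{D}_\delta$-convergent sequence in $\mathscr{S}_\delta^M$ also converges in $W^{1,\infty}(\Omega)$; hence as $w \to y$ the factor $(1+C\Vert \nabla y - \nabla w\Vert_{L^\infty(\Omega)})^{1/2} \to 1$. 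Writing the quotient in the definition of the slope (Definition~\ref{main def2}(ii)) as $G_w(y)$ times this factor, the $\limsup$ is bounded by $S(y)$. For the reverse inequality I fix $w \neq y$ and test along the segment $y_s := (1-s)y + sw$, $s \in (0,1]$. By convexity of the energy (Theorem~\ref{th: convexity}(ii)) we have $y_s \in \mathscr{S}_\delta^M$ and $\phi_\delta(y) - \phi_\delta(y_s) \ge s\,(\phi_\delta(y) - \phi_\delta(w))$, whence $(\phi_\delta(y) - \phi_\delta(y_s))^+ \ge s\,(\phi_\delta(y) - \phi_\delta(w))^+$, while Theorem~\ref{th: convexity}(i) gives $\mathcal{D}_\delta(y_s,y) \le s\,\mathcal{D}_\delta(w,y)(1+C\Vert \nabla w - \nabla y\Vert_{L^\infty(\Omega)})^{1/2}$. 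The two estimates combine to give $(\phi_\delta(y) - \phi_\delta(y_s))^+/\mathcal{D}_\delta(y_s,y) \ge G_w(y)$ for every $s$; since $y_s \to y$ as $s \to 0^+$, the $\limsup$ defining the slope is at least $G_w(y)$, and taking the supremum over $w$ yields $|\partial \phi_\delta|_{\mathcal{D}_\delta}(y) \ge S(y)$.

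For the lower semicontinuity it suffices, since the two topologies coincide on $\mathscr{S}_\delta^M$, to argue for $\mathcal{D}_\delta$-convergence $y_n \to y$. For fixed $w \neq y$ (so $w \neq y_n$ for $n$ large) the map $G_w$ is lower semicontinuous at $y$: the numerator $(\phi_\delta(\cdot) - \phi_\delta(w))^+$ is lower semicontinuous by the lower semicontinuity of $\phi_\delta$ (Theorem~\ref{th: metric space}(iii)) together with the monotonicity and continuity of $t \mapsto t^+$, while the denominator converges to a positive limit because $\mathcal{D}_\delta(\cdot,w)$ is continuous and $\Vert \nabla \cdot - \nabla w\Vert_{L^\infty(\Omega)}$ converges along the $W^{1,\infty}$-convergent sequence. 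Hence $\liminf_n |\partial \phi_\delta|_{\mathcal{D}_\delta}(y_n) \ge \liminf_n G_w(y_n) \ge G_w(y)$ for every $w \neq y$, and passing to the supremum over $w$ gives $\liminf_n |\partial \phi_\delta|_{\mathcal{D}_\delta}(y_n) \ge S(y) = |\partial \phi_\delta|_{\mathcal{D}_\delta}(y)$; the bi-Lipschitz equivalence of Lemma~\ref{lemma: metric space-properties}(ii) transfers this to $H^1(\Omega)$-convergence.

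With $\phi_\delta$ and its slope both lower semicontinuous, the strong upper gradient property follows from the general principle of \cite{AGS} that a lower semicontinuous slope coincides with its relaxation and is therefore a strong upper gradient (see \cite[Theorem 1.2.5]{AGS}). For the linear energy the identical scheme applies with $u_s = (1-s)u + sv$: now $\bar{\mathcal{D}}_0^2$ is quadratic, so $\bar{\mathcal{D}}_0(u_s,u) = s\,\bar{\mathcal{D}}_0(v,u)$ \emph{exactly} and no correction factor appears; convexity of $\bar{\phi}_0$ together with the $2$-convexity of $\bar{\mathcal{D}}_0$ from Lemma~\ref{lemma: convexity2} yields the stated representation, and the continuity of $\bar{\phi}_0$ (Theorem~\ref{th: metric space-lin}(ii)) together with the lower semicontinuity of the slope gives the strong upper gradient property.

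I expect the delicate point to be the strong upper gradient property in the nonlinear case. Because Theorem~\ref{th: convexity} only provides an approximate $2(1-C\delta^\alpha)$-convexity together with an $L^\infty$-dependent correction in the distance, the geodesic-convexity machinery of \cite[Section 2.4, Section 4]{AGS} cannot be invoked directly; one is forced to route through the explicit representation and the lower semicontinuity of the slope established above, which is precisely why these are proved by hand rather than quoted.
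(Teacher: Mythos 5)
Your representation formula and the lower semicontinuity argument are correct and essentially identical to the paper's own proof: the upper bound uses that $\mathcal{D}_\delta$-convergence forces $W^{1,\infty}(\Omega)$-convergence (Theorem~\ref{th: metric space}(ii)) so the correction factor tends to $1$; the lower bound tests along the segment $y_s=(1-s)y+sw$ using Theorem~\ref{th: convexity}(i),(ii); and the semicontinuity follows by fixing $w$ and passing to the supremum. The genuine gap is in your final step for (i). The ``general principle'' you invoke --- that a lower semicontinuous local slope coincides with its relaxation and is therefore a strong upper gradient --- is not a result of \cite{AGS}. What \cite[Lemma 1.2.5]{AGS} actually provides is (a) the local slope is always a \emph{weak} upper gradient, and (b) the \emph{global} slope $\mathcal{S}_\phi(v)=\sup_{w\neq v}(\phi(v)-\phi(w))^+/\mathcal{D}(v,w)$ of a $\mathcal{D}$-lower semicontinuous functional is a strong upper gradient. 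The difference between weak and strong is a chain-rule issue: to upgrade, one must show that $\phi_\delta\circ z$ is absolutely continuous along every absolutely continuous curve $z$ with $|\partial\phi_\delta|_{\mathcal{D}_\delta}(z)\,|z'|_{\mathcal{D}_\delta}\in L^1$, and lower semicontinuity of the slope does not by itself deliver this. Since here the local slope does \emph{not} equal the global slope (your formula carries the $L^\infty$ correction factor), (b) cannot be applied directly either, so as written the strong upper gradient claim in (i) is unproved.

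The missing idea --- and the paper's actual argument --- is quantitative and is already implicit in what you proved: on $\mathscr{S}^M_\delta$, Lemma~\ref{lemma:rigidity}(ii) gives $\Vert \nabla y-\nabla w\Vert_{L^\infty(\Omega)}\le 2C\delta^\alpha$ uniformly, so for $\delta$ small your representation yields the two-sided comparison $\mathcal{S}_{\phi_\delta}\le 2\,|\partial\phi_\delta|_{\mathcal{D}_\delta}$ on $\mathscr{S}^M_\delta$. Then $|\partial\phi_\delta|_{\mathcal{D}_\delta}(z)|z'|_{\mathcal{D}_\delta}\in L^1$ implies $\mathcal{S}_{\phi_\delta}(z)|z'|_{\mathcal{D}_\delta}\in L^1$; since $\phi_\delta$ is $\mathcal{D}_\delta$-lower semicontinuous (Theorem~\ref{th: metric space}(iii)), $\mathcal{S}_{\phi_\delta}$ is a strong upper gradient by (b), hence $\phi_\delta\circ z$ is absolutely continuous, and the weak upper gradient property of $|\partial\phi_\delta|_{\mathcal{D}_\delta}$ from (a) upgrades to the strong one. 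Note that lower semicontinuity of the slope plays no role in this step (it is needed instead as a hypothesis of Theorem~\ref{th: auxiliary1}). For (ii) the repair is immediate: your representation shows $|\partial\bar{\phi}_0|_{\bar{\mathcal{D}}_0}=\mathcal{S}_{\bar{\phi}_0}$ exactly (no correction factor), so the strong upper gradient property follows from (b) and the $\bar{\mathcal{D}}_0$-continuity of $\bar{\phi}_0$ (Theorem~\ref{th: metric space-lin}(ii)), again without any appeal to semicontinuity of the slope.
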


\Proof  Before we start with the actual proof, let us recall from \cite[Lemma 1.2.5]{AGS} that in a complete metric space $(\mathscr{S,\mathcal{D}})$ with energy $\phi$ one has that  $|\partial \phi|_{\mathcal{D}}$ is a weak upper gradient for $\phi$ in the sense of \cite[Definition 1.2.2]{AGS}. We do not repeat the definition of weak upper gradients, but only mention that weak upper gradients are also strong upper gradients if for each absolutely continuous curve $z:(a,b) \to \mathscr{S}$ with $|\partial \phi|_{\mathcal{D}}(z)|z'|_{\mathcal{D}} \in L^1(a,b)$, the function $\phi \circ z$ is absolutely continuous.

Moreover, \cite[Lemma 1.2.5]{AGS} also states that, if $\phi$ is $\mathcal{D}$-lower semicontinuous, then the global slope
\begin{align}\label{global slope}
\MK \mathcal{S}_{\phi}(v) \EEE := \sup_{w \neq v} \frac{(\phi(v) - \phi(w))^+}{\mathcal{D}(v,w)}
\end{align}
is a strong (and thus also weak) upper gradient for $\phi$. 

We now give the proof of (i). We partially follow the proofs of Theorem 2.4.9 and Corollary 2.4.10 in \cite{AGS}. To confirm the representation of $|\partial \phi_\delta|_{\mathcal{D}_\delta}$, we use the definition of the local slope in Definition \ref{main def2} and obtain with $C$ being the constant from Theorem \ref{th: convexity}(i)
\begin{align*}
|\partial \phi_\delta|_{\mathcal{D}_\delta}(y) & = \limsup_{w \to y} \frac{(\phi_\delta(y) - \phi_\delta(w))^+}{\mathcal{D}_\delta(y,w)} = \limsup_{w \to y} \frac{(\phi_\delta(y) - \phi_\delta(w))^+}{\mathcal{D}_\delta(y,w) (1 + C \Vert \nabla y- \nabla w \Vert_{\infty})^{1/2}}  \\
&\le \sup_{w \neq y} \  \frac{(\phi_\delta(y) - \phi_\delta(w))^+}{\mathcal{D}_\delta(y,w) (1 + C \Vert \nabla y - \nabla w \Vert_{\infty})^{1/2}},
\end{align*}
where in the second \MK equality \EEE we used that $w \to y$ (with respect to $\mathcal{D}_\delta$) implies $\Vert \nabla w  - \nabla y\Vert_{L^ \infty(\Omega)} \to 0$ by Theorem \ref{th: metric space}(ii).   To see the other inequality, it is not restrictive to suppose that $y \neq w$ and  
\begin{align}\label{proof2.1}
\phi_\delta(y) - \phi_\delta(w)>0.
\end{align}
By  Theorem \ref{th: convexity}(ii) with $y_0 = y$ and $y_1 = w$  we get
 \begin{align*}
\frac{\phi_\delta(y) - \phi_\delta(y_s)}{\mathcal{D}_\delta(y,y_s)}  \ge \frac{\phi_\delta(y) - \phi_\delta(w)}{\mathcal{D}_\delta(y,w)}   \frac{s\mathcal{D}_\delta(y,w)}{\mathcal{D}_\delta(y,y_s)} 
 \end{align*}
for all  $s \in [0,1]$, where $y_s = (1-s)y + s w$. Then we derive by \eqref{proof2.1} and Theorem \ref{th: convexity}(i)
\begin{align*}
|\partial \phi_\delta|_{\mathcal{D}_\delta}(y) \ge  \frac{\phi_\delta(y) - \phi_\delta(w)}{\mathcal{D}_\delta(y,w) (1+ C \Vert \nabla y - \nabla w \Vert_\infty)^{1/2}}.
\end{align*}
The claim now follows by taking the supremum with respect to $w$.  To confirm  the lower semicontinuity, we consider $y_h \to y$ in $\mathcal{D}_\delta$ or equivalently in $H^1(\Omega)$ (see Lemma \ref{lemma: metric space-properties}(ii)). If $w \neq y$, then $w \neq y_h$ for $h$ large enough and thus
\begin{align*}
\liminf_{h \to \infty}  |\partial \phi_\delta|_{\mathcal{D}_\delta}(y_h) & \ge \liminf_{h \to \infty}   \frac{(\phi_\delta(y_h) - \phi_\delta(w))^+}{\mathcal{D}_\delta(y_h,w) (1 + C\Vert \nabla y_h - \nabla w \Vert_{\infty})^{1/2}} \\
& \ge \frac{(\phi_\delta(y) - \phi_\delta(w))^+}{\mathcal{D}_\delta(y,w) (1 + C\Vert \nabla y - \nabla w \Vert_{\infty})^{1/2}}, 
\end{align*}
where we   used Theorem \ref{th: metric space}(ii),(iii). By taking the supremum with respect to $w$ the lower semicontinuity   follows.  

It remains to show that $|\partial \phi_\delta|_{\mathcal{D}_\delta}$ is a strong upper gradient.  With Lemma \ref{lemma:rigidity}(ii), for $\delta$ small enough we find $\mathcal{S}_{\phi_\delta}(y) \le 2  |\partial \phi_\delta|_{\mathcal{D}_\delta}(y)$ with $\mathcal{S}_{\phi_\delta}$ as introduced in \eqref{global slope}. Recalling the remarks at the beginning of the proof, to show that $|\partial \phi_\delta|_{\mathcal{D}_\delta}$ is a strong upper gradient we have to check that for all absolutely continuous $z:(a,b) \to \mathscr{S}^M_\delta$ with $|\partial \phi_\delta|_{\mathcal{D}_\delta}(z)|z'|_{{\mathcal D}_\delta} \in L^1(a,b)$, the function $\phi_\delta \circ z$ is absolutely continuous. First, it follows $\mathcal{S}_{\phi_\delta}(z)|z'|_{{\mathcal D}_\delta} \in L^1(a,b)$ as  $\mathcal{S}_{\phi_\delta} \le 2  |\partial \phi_\delta|_{\mathcal{D}_\delta}$.  Since $\phi_\delta$ is $\mathcal{D}_\delta$-lower semicontinous, $\mathcal{S}_{\phi_\delta}$ is a strong upper gradient. Thus,  we indeed get that $\phi_\delta \circ z$ is absolutely continuous, see Definition \ref{main def2}.

We now concern ourselves with (ii). The representation of the local slope follows from the convexity property in Lemma \ref{lemma: convexity2} as was shown in \cite[Theorem 2.4.9]{AGS}. Therefore, $\mathcal{S}_{\bar{\phi}_0} =  |\partial \bar{\phi}_0|_{\bar{\mathcal{D}}_0}$, which is $\bar{\mathcal{D}}_0$ lower semicontinous by Lemma \ref{th: metric space-lin}(ii) and thus  $|\partial \bar{\phi}_0|_{\bar{\mathcal{D}}_0}$ is a strong upper gradient. \eop

\section{Proof of the main results}\label{sec results}

In this section we give the proof of Theorem \ref{maintheorem1}-Theorem \ref{maintheorem3}.
  
\subsection{Existence of curves of maximal slope}

 In this section we prove the first two parts of Theorem  \ref{maintheorem1} and Theorem \ref{maintheorem2}, which essentially follow from the properties of the metric spaces established in Section \ref{sec: metric}, \ref{sec: slopes} by applying the general results recalled in Section \ref{sec: AGS-results}. 

\begin{proof}[Proof of Theorem  \ref{maintheorem1}(i),(ii)]
First, we note  that the assumptions of Theorem \ref{th: auxiliary1} are satisfied by Lemma \ref{lemma: slopes}(i) and Lemma \ref{th: metric space}(ii),(iii), where we let $\mathscr{S} = \mathscr{S}_\delta^M$  and let  $\sigma$ be the topology induced by $\mathcal{D}_\delta$. 

(i)  Fix $y_0 \in \mathscr{S}^M_\delta$. Define the initial data  $U^0_\tau = y_0$ for all $\tau>0$. Applying Theorem \ref{th: auxiliary1}(i)  we find a curve $y$ which is the limit of a sequence of discrete solutions with $y(0) = y_0$. Thus, in view of Definition \ref{main def1}, $y \in GMM(\Phi_{\delta};y_0)$, which is therefore nonempty.  

(ii) To see that generalized minimizing movements are curves of maximal slope, it suffices to apply Theorem \ref{th: auxiliary1}(ii).
\end{proof}

\begin{proof}[Proof of Theorem  \ref{maintheorem2}(i),(ii)]
In the linear setting the convexity property given in Lemma \ref{lemma: convexity2}  holds and $\bar{\phi}_0$ is convex by \eqref{linear energy} and Lemma \ref{D-lin}(iii). Thus, Theorem \ref{th: auxiliary2} is applicable.  Apart from uniqueness, the result then follows from Theorem \ref{th: auxiliary2}. It remains to show that the unique  minimizing movement is also the unique curve of maximal slope for $\bar{\phi}_0$ with respect to the strong upper gradient $|\partial \bar{\phi}_0|_{\bar{\mathcal{D}}_0}$. To this end, we follow an idea used, e.g., in \cite{Gigli}.

We first observe that the metric derivative $|u'|^2_{\bar{\mathcal{D}}_0}$ is convex. Indeed, let $u^1,u^2:[0,\infty) \to H^1_0(\Omega)$ be two curves. We get for $u^{3} = \frac{1}{2}(u^1 + u^2)$ by Young's inequality  (define $v^i = u^{i}(s) - u^{i}(t)$, $i=1,2$, for brevity)
\begin{align*}
\bar{\mathcal{D}}_0&(u^{3}(s), u^{3}(t))^2 = \int_\Omega \C_D[e((v^1+v^2)/2), e((v^1+v^2)/2)]\\& = \sum\nolimits_{i=1,2} \frac{1}{4}\int_\Omega \C_D[e(v^i), e(v^i)] + \frac{1}{2}\int_\Omega \C_D[e(v^1), e(v^2)] \\
& \le  \sum\nolimits_{i=1,2}\frac{1}{2}\int_\Omega \C_D[e(v^i), e(v^i)] =  \frac{1}{2} \bar{\mathcal{D}}_0(u^{1}(s), u^{1}(t))^2 + \frac{1}{2} \bar{\mathcal{D}}_0(u^{2}(s), u^{2}(t))^2.
\end{align*}
Dividing by $|s-t|^2$ and letting $s$ go to $t$ we obtain the claim. We also anticipate from Lemma \ref{lemma: lin-slope} below that $u \mapsto |\partial \bar{\phi}_0|^2_{\bar{\mathcal{D}}_0}(u)$ is convex. 

Assume there were two different curves of maximal slope $u^1$, $u^2$ starting from $u_0$, i.e., we find some $T$ such that $e(u^1(T)) \neq e(u^2(T))$ since otherwise the curves would coincide by Korn's inequality. Set $u^{3} = \frac{1}{2}(u^1 + u^2)$ and compute by the strict convexity of $\C_W$ on $\R^{d \times d}_{\rm sym}$ (see Lemma \ref{D-lin}(iii)), the convexity properties of the slope and metric derivative, and \eqref{maximalslope}
\begin{align*}
\bar{\phi}_0(u_0) &= \frac{1}{2}\sum_{i=1,2} \Big( \frac{1}{2} \int_0^T |(u^i)'|_{\bar{\mathcal{D}}_0}^2(t) \, dt + \frac{1}{2} \int_0^T |\partial \bar{\phi}_0|_{\bar{\mathcal{D}}_0}^2(u^i(t)) \, dt + \bar{\phi}_0(u^i(T)) \Big)\\
& >  \frac{1}{2} \int_0^T |(u^{3})'|_{\bar{\mathcal{D}}_0}^2(t) \, dt + \frac{1}{2} \int_0^T |\partial \bar{\phi}_0|_{\bar{\mathcal{D}}_0}^2(u^{3}(t)) \, dt + \bar{\phi}_0(u^{3}(T)),
\end{align*}
which contradicts the fact that $ |\partial \bar{\phi}_0|_{\bar{\mathcal{D}}_0}$ is an upper gradient (see Definition \ref{main def2}(i) and use Young's inequality). This contradiction establishes uniqueness and concludes the proof. 
\end{proof}

\subsection{$\Gamma$-convergence and lower semicontinuity}

As a preparation for the passage to the linear problem, we recall and prove $\Gamma$-convergence results for the energies and lower semicontinuity for the slopes. In the following it is convenient to express all quantities in terms of the linear setting. To this end, recalling \eqref{nonlinear energy} and \eqref{eq: D,D0}, for $u,v \in W^{2,p}_0(\Omega)$ and $\tau,\delta>0$ we define 
\begin{align*}
&  \bar{\phi}_{\delta}(u) = \phi_\delta(\id + \delta u), \ \ \bar{\phi}_{\delta,P}(u) = \delta^{-p\alpha}\int_\Omega P(\delta\nabla^2 u), \ \    \bar{\phi}_{\delta,W}(u) = \bar{\phi}_\delta(u)  - \bar{\phi}_{\delta,P}(u), \\
&\bar{\mathcal{D}}_\delta(u,v) = {\mathcal{D}}_\delta(\id+  \delta u, \id + \delta v), \ \ \ 
\bar{\Phi}_\delta(\tau,v;u)    = \bar{\phi}_\delta(u) + \frac{1}{2\tau}\bar{\mathcal{D}}_\delta(u,v)^2,\\
&|\partial \bar{\phi}_\delta|_{\bar{\mathcal{D}}_\delta}(u) =  |\partial {\phi}_\delta|_{{\mathcal{D}}_\delta}(\id + \delta u).
\end{align*}
We extend $\bar{\phi}_\delta$ to a functional defined on $H^1_0(\Omega) $ by setting $\bar{\phi}_\delta(u) = + \infty$ for $u \in H^1_0(\Omega)  \setminus W^{2,p}_0(\Omega) $. Likewise, we extend $\bar{\Phi}_\delta$. Moreover, we say $u \in \bar{\mathscr{S}}_\delta^M$ if $\id + \delta u \in \mathscr{S}_\delta^M$. We obtain the following $\Gamma$-convergence results. (For an exhaustive treatment of $\Gamma$-convergence we refer the reader to \cite{DalMaso:93}.)

\begin{theorem}[$\Gamma$-convergence]\label{th: Gamma}
Let   $(\delta_n)_n$ be a null sequence. 

(i) The functionals $\bar{\phi}_{\delta_n}: H^1_0(\Omega) \to [0,\infty]$ $\Gamma$-converge to $\bar{\phi}_0$ in the weak $H^1(\Omega)$-topology.

(ii) For each $\tau>0$, $M>0$, and each sequence $(\bar{v}_n)_n$ with $\bar{v}_n \in \bar{\mathscr{S}}_{\delta_n}^M$ and  $\bar{v}_n \to \bar{v}$ strongly in $H^1(\Omega)$, the functionals $\bar{\Phi}_{\delta_n}(\tau,\bar{v}_n;\cdot):  H^1_0(\Omega) \to [0,\infty]$ $\Gamma$-converge to $\bar{\Phi}_0(\tau,\bar{v}; \cdot)$ in the weak $H^1(\Omega)$-topology.

\end{theorem}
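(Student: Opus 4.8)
The plan is to verify the two defining inequalities of $\Gamma$-convergence — the liminf (lower bound) inequality along arbitrary weakly converging sequences and the existence of a recovery sequence — for part (i), and then to treat part (ii) as a perturbation of (i) by the dissipation term $\frac{1}{2\tau}\bar{\mathcal{D}}_\delta(\cdot,\bar v_n)^2$. Throughout I would use that a weakly $H^1$-converging sequence with bounded energy necessarily lands in a sublevel set $\bar{\mathscr{S}}_{\delta_n}^{M'}$ (finite energy forces $W^{2,p}_0$-regularity through the growth bound \eqref{assumptions-P}(iii)), so that the quantitative expansions of Lemma \ref{lemma: metric space-properties} become available.

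For the liminf inequality in (i), take $u_n \weakly u$ in $H^1(\Omega)$; I may assume $\sup_n \bar{\phi}_{\delta_n}(u_n) =: M < \infty$, whence $u_n \in \bar{\mathscr{S}}_{\delta_n}^M$. Dropping the nonnegative perturbation $\bar{\phi}_{\delta_n,P}$ and invoking Lemma \ref{lemma: metric space-properties}(iv) gives $\bar{\phi}_{\delta_n,W}(u_n) \ge \int_\Omega \frac{1}{2} \C_W[e(u_n),e(u_n)] - C\delta_n^\alpha$; since $e(u_n) \weakly e(u)$ in $L^2$ and the quadratic form is convex (Lemma \ref{D-lin}(iii)), weak lower semicontinuity yields $\liminf_n \bar{\phi}_{\delta_n}(u_n) \ge \bar{\phi}_0(u)$. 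For the recovery sequence I would first approximate $u$ by $u^\eta \in C_c^\infty(\Omega)$ with $u^\eta \to u$ in $H^1$, so that $\bar{\phi}_0(u^\eta) \to \bar{\phi}_0(u)$. For fixed smooth $u^\eta$ the perturbation obeys $\bar{\phi}_{\delta_n,P}(u^\eta) \le c_2 \delta_n^{p(1-\alpha)}\int_\Omega |\nabla^2 u^\eta|^p \to 0$ (here $p(1-\alpha)>0$), while a uniform Taylor expansion of $W$ around $\Id$ (using $W(\Id)=0$, $\partial_F W(\Id)=0$, $\partial^2_{F^2}W(\Id)=\C_W$, and frame indifference to pass to $e(u^\eta)$) together with dominated convergence gives $\bar{\phi}_{\delta_n,W}(u^\eta) \to \bar{\phi}_0(u^\eta)$. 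A diagonal argument then produces $\eta(n)\to 0$ with $u_n := u^{\eta(n)} \to u$ strongly (hence weakly) and $\limsup_n \bar{\phi}_{\delta_n}(u_n) \le \bar{\phi}_0(u)$. This is essentially the static linearization of \cite{DalMasoNegriPercivale:02}.

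For part (ii) I would reuse the analysis of (i) for the elastic part and control the dissipation term via Lemma \ref{lemma: metric space-properties}(iii), which on a common sublevel set gives $|\bar{\mathcal{D}}_{\delta_n}(w,\bar v_n)^2 - \bar{\mathcal{D}}_0(w,\bar v_n)^2| \le C\delta_n^\alpha$. For the liminf, after restricting to a subsequence with bounded $\bar{\Phi}_{\delta_n}(\tau,\bar v_n;u_n)$ (so both $u_n$ and $\bar v_n$ lie in sublevel sets), I replace $\bar{\mathcal{D}}_{\delta_n}^2$ by $\bar{\mathcal{D}}_0^2$ up to $o(1)$; since $\bar v_n \to \bar v$ strongly and $u_n \weakly u$, one has $e(u_n)-e(\bar v_n) \weakly e(u)-e(\bar v)$ in $L^2$, so weak lower semicontinuity of the convex form $\bar{\mathcal{D}}_0(\cdot,\cdot)^2$ gives $\liminf_n \bar{\mathcal{D}}_{\delta_n}(u_n,\bar v_n)^2 \ge \bar{\mathcal{D}}_0(u,\bar v)^2$; adding the elastic liminf via $\liminf(a_n+b_n)\ge \liminf a_n + \liminf b_n$ concludes. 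For the recovery sequence I take the same diagonal sequence $u_n \to u$ as in (i); since now $u_n \to u$ and $\bar v_n \to \bar v$ both strongly, Lemma \ref{lemma: metric space-properties}(iii) and continuity of the quadratic form give $\bar{\mathcal{D}}_{\delta_n}(u_n,\bar v_n)^2 \to \bar{\mathcal{D}}_0(u,\bar v)^2$, so the full functional converges.

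The main obstacle is bookkeeping the sublevel-set constraints: the crucial expansions of Lemma \ref{lemma: metric space-properties}(iii),(iv) (and the rigidity of Lemma \ref{lemma:rigidity}) only hold on $\mathscr{S}_{\delta}^M$, so the liminf argument must first extract a subsequence of bounded energy and verify membership in a common sublevel set for \emph{both} arguments of $\bar{\mathcal{D}}$ before the estimates can be applied; and it is essential that the anchor $\bar v_n$ converges \emph{strongly}, since the dissipation term must be continuous (not merely lower semicontinuous) in that slot for the recovery inequality of (ii) to hold.
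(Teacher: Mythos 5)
Your proposal is correct and takes essentially the same route as the paper: the liminf inequality via the quantitative expansions of Lemma \ref{lemma: metric space-properties}(iv) (and (iii) for part (ii)) on sublevel sets combined with weak lower semicontinuity of the convex quadratic forms, and the recovery sequence via smooth approximation with the higher-order term vanishing because $\alpha<1$, exactly as in the paper's sketch following \cite{DalMasoNegriPercivale:02}. The only cosmetic difference is in (ii), where the paper first proves $\Gamma$-convergence for a constant anchor $\bar v$ and then shows $|\bar{\mathcal{D}}_{0}(u_n,\bar{v}_{n})^2 - \bar{\mathcal{D}}_{0}(u_n,\bar{v})^2| \to 0$ via the explicit cross-term expansion of the quadratic distance, whereas you handle the moving anchor $\bar v_n$ directly in both bounds (weak lower semicontinuity of $\bar{\mathcal{D}}_0(\cdot,\bar v_n)^2$ using $e(u_n)-e(\bar v_n)\weakly e(u)-e(\bar v)$, and continuity for the recovery sequence) --- equivalent arguments resting on the same strong convergence of $\bar v_n$ and boundedness of $u_n$.
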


\Proof 
(i) The result is essentially proved in  the paper \cite{DalMasoNegriPercivale:02} and we only give a short sketch highlighting the relevant adaptions.  Since $\bar{\phi}_{\delta_n,P} \ge 0$, for the lower bound it suffices to prove $\liminf_{n \to \infty} \bar{\phi}_{\delta_n,W}(u_n) \ge \bar{\phi}_0(u)$ whenever $u_n \rightharpoonup u$ weakly in $H^1(\Omega)$. This was proved under more general assumptions in \cite[Proposition 4.4]{DalMasoNegriPercivale:02}. In our setting it follows readily by using Lemma \ref{lemma: metric space-properties}(iv) and the lower semicontinuity of $\bar{\phi}_0$ (see Lemma \ref{D-lin}(iii)). 

By a general approximation  argument in the theory of $\Gamma$-convergence it suffices to establish the upper bound for smooth functions $u$, cf. \cite[Proposition 4.1]{DalMasoNegriPercivale:02}. For such a function, setting $u_n = u$, we find $\lim_n \bar{\phi}_{\delta_n,W}(u_n) = \bar{\phi}_0(u)$ (see Lemma \ref{lemma: metric space-properties}(iv) or \cite[Proposition 4.1]{DalMasoNegriPercivale:02}) and moreover it is not hard to see that $\bar{\phi}_{\delta_n,P}(u_n) \to 0$ by the growth of $P$ and the fact that $\alpha<1$. This concludes the proof of (i).

(ii) We first suppose that the sequence $(\bar{v}_n)_n$ is constantly $\bar{v}$. Then $\bar{\Phi}_{\delta_n}(\tau,\bar{v};\cdot)$ $\Gamma$-converges to $\bar{\Phi}_{0}(\tau,\bar{v};\cdot)$  repeating exactly the proof of (i), where,  in addition to Lemma \ref{lemma: metric space-properties}(iv), we also use Lemma \ref{lemma: metric space-properties}(iii). To obtain the general case, it now suffices to prove that for every sequence $(u_n)_n$ uniformly bounded in $H_0^1(\Omega)$ and $u_n \in \bar{\mathscr{S}}_{\delta_n}^M$ for some $M$ large enough we obtain
$$\lim\nolimits_{n\to \infty} |\bar{\mathcal{D}}_{\delta_n}(u_n,\bar{v}_{n})^2 - \bar{\mathcal{D}}_{\delta_n}(u_n,\bar{v})^2| = 0.$$
In view of Lemma \ref{lemma: metric space-properties}(iii), it suffices to show $\lim\nolimits_{n\to \infty} |\bar{\mathcal{D}}_{0}(u_n,\bar{v}_{n})^2 - \bar{\mathcal{D}}_{0}(u_n,\bar{v})^2| = 0$.    To this end, we note that (recall \eqref{eq: D,D0})
\begin{align*}
\bar{\mathcal{D}}_{0}(u_n,\bar{v}_{n})^2 - \bar{\mathcal{D}}_{0}(u_n,\bar{v})^2 &= \int_\Omega \C_D[\nabla \bar{v}_{n}, \nabla \bar{v}_{n}] - \int_\Omega \C_D[\nabla \bar{v}, \nabla \bar{v}] \\& \ \ \  - 2\int_\Omega \C_D[\nabla u_n, \nabla \bar{v}_{n} - \nabla\bar{v}],
\end{align*}
which by the assumption on $(\bar{v}_n)_n$ and $(u_n)_n$ converges to zero. \eop 


We remark that by a general result in the theory of $\Gamma$-convergence we get that (almost) minimizers associated to the sequence of functionals converge to minimizers of the limiting functional.  We obtain the following strong convergence result for recovery sequences which in various settings has been derived in, e.g., \cite{DalMasoNegriPercivale:02, FriedrichSchmidt:2011, Schmidt:08}.

\begin{lemma}[Strong convergence of recovery sequences]\label{lemma: energy}
Suppose that the assumptions of Theorem \ref{th: Gamma} hold. Let $M>0$, let  $(u_n)_n$ be a sequence  with $u_n \in \bar{\mathscr{S}}_{\delta_n}^M$. Let $u \in H_0^1(\Omega)$   such that $u_n\rightharpoonup u$ weakly in $H^1(\Omega)$ and 
$$(i) \ \ \bar{\phi}_{\delta_n}(u_n) \to \bar{\phi}_0(u) \ \ \ \text{or} \  \ \ (ii) \ \ \bar{\Phi}_{\delta_n}(\tau,\bar{v}_n; u_n) \to\bar{\Phi}_0(\tau,\bar{v}; u).$$ 
Then $u_n \to  u$ strongly in $H^1(\Omega)$. 
\end{lemma}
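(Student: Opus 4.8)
The plan is to prove case (i) first and then reduce case (ii) to it. The guiding principle is that, after linearization, the elastic energy is bounded below by a coercive quadratic form in the symmetrized gradient, so that convergence of the energies forces convergence of the $\C_W$-norms of the strains $e(u_n)$; weak convergence together with norm convergence in a Hilbert space yields strong convergence, and Korn's inequality then promotes this to strong convergence in $H^1(\Omega)$.

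For case (i), I would first split the energy as $\bar{\phi}_{\delta_n}(u_n) = \bar{\phi}_{\delta_n,W}(u_n) + \bar{\phi}_{\delta_n,P}(u_n)$ with both summands nonnegative. The lower bound inequality established in the proof of Theorem \ref{th: Gamma}(i) gives $\liminf_n \bar{\phi}_{\delta_n,W}(u_n) \ge \bar{\phi}_0(u)$ for the weakly convergent sequence $u_n \rightharpoonup u$; combining this with $\bar{\phi}_{\delta_n,P}(u_n) \ge 0$ and the assumed convergence $\bar{\phi}_{\delta_n}(u_n) \to \bar{\phi}_0(u)$, an elementary $\liminf$/$\limsup$ argument forces $\bar{\phi}_{\delta_n,W}(u_n) \to \bar{\phi}_0(u)$ (and incidentally $\bar{\phi}_{\delta_n,P}(u_n) \to 0$). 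Since $u_n \in \bar{\mathscr{S}}_{\delta_n}^M$, Lemma \ref{lemma: metric space-properties}(iv) then lets me replace the nonlinear elastic energy by its linearization up to an error $O(\delta_n^\alpha)$, so that $\int_\Omega \tfrac12 \C_W[e(u_n),e(u_n)] \to \int_\Omega \tfrac12\C_W[e(u),e(u)] = \bar{\phi}_0(u)$ (recall $f\equiv 0$ and \eqref{linear energy}). Now I would view $L^2(\Omega;\R^{d\times d}_{\rm sym})$ as a Hilbert space under the inner product $a(e,e') = \int_\Omega \tfrac12\C_W[e,e']$, which is equivalent to the standard one since $\C_W$ is symmetric, bounded, and coercive on symmetric matrices by Lemma \ref{D-lin}(iii). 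Because $u_n \rightharpoonup u$ in $H^1$ implies $e(u_n) \rightharpoonup e(u)$ weakly in $L^2$, the identity $a(e(u_n)-e(u),e(u_n)-e(u)) = a(e(u_n),e(u_n)) - 2a(e(u_n),e(u)) + a(e(u),e(u)) \to 0$ yields $e(u_n) \to e(u)$ strongly in $L^2$. Applying Korn's inequality to $u_n - u \in H^1_0(\Omega)$ finally gives $\Vert u_n - u\Vert_{H^1(\Omega)} \le C\Vert e(u_n)-e(u)\Vert_{L^2(\Omega)} \to 0$, which is the claim.

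For case (ii), the plan is to show that the two constituents of the incremental functional converge separately, thereby recovering the hypothesis of case (i). Writing $\bar{\Phi}_{\delta_n}(\tau,\bar{v}_n;u_n) = \bar{\phi}_{\delta_n}(u_n) + \tfrac{1}{2\tau}\bar{\mathcal{D}}_{\delta_n}(u_n,\bar{v}_n)^2$, I would establish lower semicontinuity of each term: for the energy, $\liminf_n \bar{\phi}_{\delta_n}(u_n) \ge \bar{\phi}_0(u)$ by Theorem \ref{th: Gamma}(i); for the dissipation, Lemma \ref{lemma: metric space-properties}(iii) replaces $\bar{\mathcal{D}}_{\delta_n}(u_n,\bar{v}_n)^2$ by $\bar{\mathcal{D}}_0(u_n,\bar{v}_n)^2 = \int_\Omega \C_D[e(u_n)-e(\bar{v}_n),e(u_n)-e(\bar{v}_n)]$ up to $O(\delta_n^\alpha)$, and then, since $\bar{v}_n \to \bar{v}$ strongly and $e(u_n) \rightharpoonup e(u)$ weakly, weak lower semicontinuity of the coercive quadratic form $\C_D$ (Lemma \ref{D-lin}(iii)) gives $\liminf_n \bar{\mathcal{D}}_{\delta_n}(u_n,\bar{v}_n)^2 \ge \bar{\mathcal{D}}_0(u,\bar{v})^2$. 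As the sum converges to $\bar{\phi}_0(u) + \tfrac{1}{2\tau}\bar{\mathcal{D}}_0(u,\bar{v})^2$ while each term is asymptotically bounded below by its limiting value, both terms must converge to their limits; in particular $\bar{\phi}_{\delta_n}(u_n) \to \bar{\phi}_0(u)$, so case (i) applies and yields $u_n \to u$ strongly in $H^1(\Omega)$.

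The main obstacle is the passage, in case (i), from convergence of the (nonlinear) energies to strong convergence of the strains: this is where the argument genuinely uses the quadratic structure of the limit together with coercivity of $\C_W$ on $\R^{d\times d}_{\rm sym}$ and Korn's inequality, none of which would on their own exclude merely weak convergence. The linearization estimate Lemma \ref{lemma: metric space-properties}(iv) is essential, as it is what permits trading the nonlinear functional for the genuinely quadratic form $a(\cdot,\cdot)$, for which the Hilbert-space weak-plus-norm convergence argument is available; the remaining steps (the energy splitting and the reduction of (ii) to (i)) are comparatively routine.
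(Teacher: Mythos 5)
Your proposal is correct and takes essentially the same route as the paper's proof: in case (i) the paper likewise uses Lemma \ref{lemma: metric space-properties}(iv) to pass from $\bar{\phi}_{\delta_n}(u_n)\to\bar{\phi}_0(u)$ to $\bar{\phi}_0(u_n)\to\bar{\phi}_0(u)$, then expands the quadratic form $\int_\Omega \C_W[e(u_n-u),e(u_n-u)]$ using weak convergence of $e(u_n)$ and coercivity of $\C_W$ from Lemma \ref{D-lin}(iii), and concludes with Korn's inequality. Your Hilbert-space phrasing of the quadratic expansion, your explicit $\liminf$/$\limsup$ splitting to dispose of the nonnegative $P$-term, and your separate-lower-semicontinuity reduction of (ii) to (i) are exactly the details the paper compresses into the remark that the proof of (ii) is ``similar, taking Lemma \ref{lemma: metric space-properties}(iii) into account.''
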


\Proof If $\bar{\phi}_{\delta_n}(u_n) \to \bar{\phi}_0(u)$,  we find $\bar{\phi}_0(u_n) \to \bar{\phi}_0(u)$ by Lemma \ref{lemma: metric space-properties}(iv) and thus by Lemma \ref{D-lin}(iii)
\begin{align*}
\Vert & e(u_n-u) \Vert^2_{L^2(\Omega)} \le C\int_\Omega \C_W[e(u_n-u),e(u_n-u)] \\
&  = C\Big(\int_\Omega \C_W[e(u_n),e(u_n)] + \int_\Omega \C_W[e(u),e(u)] -2\int_\Omega \C_W[e(u_n),e(u)]\Big) \to 0
\end{align*}
as $n \to \infty$. The assertion of (i) follows from Korn's inequality. The proof of (ii) is similar, where one additionally  takes Lemma \ref{lemma: metric space-properties}(iii) into account. \eop

We close this section with a lower semicontinuity result for the slopes.

\begin{lemma}[Lower semicontinuity of slopes]\label{lemma: lsc-slope}
For each sequence $(u_n)_n\subset \bar{\mathscr{S}}_{\delta_n}^M$ with $u_n \rightharpoonup u$ weakly in $H^1(\Omega)$ we have $\liminf_{n \to \infty}|\partial \bar{\phi}_{\delta_n}|_{\bar{\mathcal{D}}_{\delta_n}}(u_n) \ge |\partial \bar{\phi}_0|_{\bar{\mathcal{D}}_0}(u)$. 
\end{lemma}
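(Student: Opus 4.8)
The plan is to work directly with the explicit representations of the two slopes furnished by Lemma \ref{lemma: slopes}. Writing the nonlinear slope in the linearized variables via $y=\id+\delta u$, $w=\id+\delta v$ (so that $\nabla y-\nabla w=\delta(\nabla u-\nabla v)$) it reads
\[
|\partial \bar\phi_{\delta}|_{\bar{\mathcal{D}}_{\delta}}(u) = \sup_{v \neq u} \frac{(\bar\phi_\delta(u) - \bar\phi_\delta(v))^+}{\bar{\mathcal{D}}_\delta(u,v)\,(1 + C\delta\|\nabla u - \nabla v\|_{L^\infty(\Omega)})^{1/2}},
\]
whereas the linear slope is the plain quotient $\sup_{v\neq u}(\bar\phi_0(u)-\bar\phi_0(v))^+/\bar{\mathcal{D}}_0(u,v)$. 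Since the asserted inequality is trivial when $u$ minimizes $\bar\phi_0$, it suffices to fix an arbitrary $v\in H^1_0(\Omega)$ with $\bar\phi_0(v)<\bar\phi_0(u)$ (in particular $\bar{\mathcal{D}}_0(u,v)>0$) and to construct competitors $v_n$ at $u_n$ with $\liminf_n|\partial\bar\phi_{\delta_n}|_{\bar{\mathcal{D}}_{\delta_n}}(u_n)\ge(\bar\phi_0(u)-\bar\phi_0(v))/\bar{\mathcal{D}}_0(u,v)$; taking the supremum over such $v$ then concludes.

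The decisive idea — forced by the fact that $u_n$ converges only weakly, so that the quadratic energy and distance are merely weakly lower semicontinuous — is to let the competitor \emph{travel with} $u_n$. I would set $v_n:=u_n+\psi_n$, where $\psi_n\in C_c^\infty(\Omega)$ is chosen with $\psi_n\to v-u$ strongly in $H^1(\Omega)$ and, by a diagonal argument, $\|\nabla^2\psi_n\|_{L^p(\Omega)}=o(\delta_n^{-(1-\alpha)})$; this is possible because $\delta_n^{-(1-\alpha)}\to\infty$. Using Lemma \ref{lemma:rigidity}(ii) one checks $\|\delta_n\nabla v_n\|_{L^\infty}\le C\delta_n^\alpha$, and together with the $p$-growth in \eqref{assumptions-P} that $v_n\in\bar{\mathscr{S}}_{\delta_n}^{M'}$ for a fixed $M'$, so that Lemma \ref{lemma: metric space-properties}(iii),(iv) apply with constants uniform in $n$. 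I also record the a priori bound $\|\nabla^2u_n\|_{L^p}\le C\delta_n^{-(1-\alpha)}$ coming from $\bar\phi_{\delta_n,P}(u_n)\le M$ and the lower growth of $P$.

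For the numerator I would split $\bar\phi_{\delta_n}(u_n)-\bar\phi_{\delta_n}(v_n)$ into its elastic and higher-order parts. In the elastic part Lemma \ref{lemma: metric space-properties}(iv) replaces $\bar\phi_{\delta_n,W}$ by the quadratic $\bar\phi_0$ up to $O(\delta_n^\alpha)$, and the key observation is that the leading $u_n$-quadratic term cancels in the difference,
\[
\bar\phi_0(u_n)-\bar\phi_0(u_n+\psi_n)=-\int_\Omega\C_W[e(u_n),e(\psi_n)]-\tfrac12\int_\Omega\C_W[e(\psi_n),e(\psi_n)],
\]
so that only a weak–strong cross term and a strongly convergent term remain and the expression tends to $\bar\phi_0(u)-\bar\phi_0(v)$. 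For the higher-order part I would bound $|\bar\phi_{\delta_n,P}(u_n)-\bar\phi_{\delta_n,P}(v_n)|$ by the mean value theorem and the gradient estimate $|\partial_GP(G)|\le c_2|G|^{p-1}$ from \eqref{assumptions-P}(iii), followed by Hölder's inequality, obtaining a bound of the form $C\delta_n^{1-\alpha}\|\nabla^2\psi_n\|_{L^p}+C(\delta_n^{1-\alpha}\|\nabla^2\psi_n\|_{L^p})^p$, which vanishes by the choice of $\psi_n$ and the bound on $\|\nabla^2u_n\|_{L^p}$. Hence the numerator converges to $\bar\phi_0(u)-\bar\phi_0(v)>0$.

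For the denominator I would use that $\bar{\mathcal{D}}_0(u_n,v_n)^2=\int_\Omega\C_D[e(\psi_n),e(\psi_n)]\to\bar{\mathcal{D}}_0(u,v)^2$ by the strong convergence $\psi_n\to v-u$ — here the moving competitor is again essential, since $u_n-v_n=-\psi_n$ converges strongly and no weak defect survives — while Lemma \ref{lemma: metric space-properties}(iii) upgrades this to $\bar{\mathcal{D}}_{\delta_n}(u_n,v_n)\to\bar{\mathcal{D}}_0(u,v)$. Since $\delta_n\|\nabla u_n-\nabla v_n\|_{L^\infty}=\delta_n\|\nabla\psi_n\|_{L^\infty}\to0$ (again via $\|\nabla^2\psi_n\|_{L^p}=o(\delta_n^{-(1-\alpha)})$ and $p>d$), the correction factor tends to $1$. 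Combining the two limits yields the desired lower bound for fixed $v$, and the supremum over $v$ completes the argument. The main obstacle is exactly the weak-only convergence of $u_n$: a fixed competitor would leave an uncontrolled lower-semicontinuity defect in both energy and distance, and the resolution is to move the competitor with $u_n$ so that the defect cancels exactly in the energy difference and is absent from the distance, the only remaining subtlety being the control of the blowing-up second-gradient term, handled through the $p$-growth of $\partial_GP$.
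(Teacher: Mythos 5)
Your proof is correct and follows essentially the same route as the paper: the decisive device in both is the moving competitor $v_n=u_n+(\text{smooth shift})$, which makes the weak-convergence defect cancel in the energy difference and renders the dissipation distance asymptotically independent of $u_n$, with the slope representation of Lemma \ref{lemma: slopes}(i) absorbing the $L^\infty$ correction factor. The only (immaterial) differences are that the paper uses a fixed smooth shift $v-u'$ with an $\eps$-approximation of $u$ and a one-sided convexity bound for the $P$-term, whereas you diagonalize $\psi_n\to v-u$ with the quantitative control $\|\nabla^2\psi_n\|_{L^p}=o(\delta_n^{\alpha-1})$ and estimate the $P$-term two-sidedly via the mean value theorem and \eqref{assumptions-P}(iii) — both hinge on the same exponent count $1+\alpha(p-1)-\alpha p=1-\alpha>0$.
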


\Proof For $\eps>0$ fix $u' \in  C^\infty_c(\Omega;\R^d)$ with $\Vert u' - u \Vert_{H^1(\Omega)} \le \eps$.  Fix $v \in C^\infty_c(\Omega;\R^d)$, $v \neq u',u$. We first note that with  $w_n := u_n - u'+ v$ we have by Lemma \ref{lemma: slopes}(i)
\begin{align*}
|\partial \bar{\phi}_{\delta_n}|_{\bar{\mathcal{D}}_{\delta_n}}(u_n) & =  \sup_{w \neq u_n} \  \frac{(\bar{\phi}_{\delta_n}(u_n) - \bar{\phi}_{\delta_n}(w))^+}{\bar{\mathcal{D}}_{\delta_n}(u_n,w)(1 + C  \Vert \Id + \delta_n \nabla u_n - (\Id + \delta_n \nabla w) \Vert_{L^\infty(\Omega)})^{1/2}} \\
&\ge    \frac{(\bar{\phi}_{\delta_n}(u_n) - \bar{\phi}_{\delta_n}(w_n ))^+}{\bar{\mathcal{D}}_{\delta_n}(u_n,w_n)(1 + C_v\delta_n)^{1/2}},
\end{align*}
where $C_v$ is a constant depending also on $v$ and $u'$. Note that, since $u',v$ are smooth, we indeed get  $w_n = u_n - u'+ v \in \bar{\mathscr{S}}_{\delta_n}^M$ for $n$ large enough for some possibly larger $M>0$. Consequently, by Lemma \ref{lemma: metric space-properties}(iii),(iv)
we get
\begin{align}\label{lsc-slope1}
\liminf_{n \to \infty}|\partial \bar{\phi}_{\delta_n}|_{\bar{\mathcal{D}}_{\delta_n}}(u_n)  \ge \liminf_{n \to \infty}   \frac{(\bar{\phi}_{0}(u_n) - \bar{\phi}_{0}(w_n) + \bar{\phi}_{\delta_n,P}(u_n) - \bar{\phi}_{\delta_n,P}(w_n) )^+}{\bar{\mathcal{D}}_{0}(u_n,w_n)}.
\end{align}
Recalling \eqref{linear energy} (for $f \equiv 0 $) we obtain by a direct computation  
\begin{align} 
\lim_{n \to \infty} &\big(\bar{\phi}_0(u_n) - \bar{\phi}_0(u_n - u'+ v)\big) = \lim_{n \to \infty} \big(-\bar{\phi}_0(v-u') - 2\int_\Omega \C_W[e(u_n),e(v-u')] \big)\notag \\
&=  -\bar{\phi}_0(v-u') - 2\int_\Omega \C_W[e(u),e(v-u')]\notag\\& = \bar{\phi}_0(u) - \bar{\phi}_0(v) - \bar{\phi}_0(u'-u)  +2\int_\Omega \C_W[e(u'-u),e(v)].
\end{align}
Moreover, by convexity of $P$ and the definition  $w_n := u_n - u'+ v$ we find
\begin{align}\label{lsc-slope3}
\bar{\phi}_{\delta_n,P}(u_n) - \bar{\phi}_{\delta_n,P}(u_n - u'+ v) \ge \delta_n^{-p\alpha} \int_\Omega \partial_GP(\delta_n\nabla^2w_n) : \delta_n(\nabla^2 u'- \nabla^2 v),
\end{align}
which vanishes as $n \to \infty$ by \eqref{assumptions-P}(iii), H\"older's inequality, $1+ \alpha(p-1) - \alpha p >0$, and  the fact that $\Vert \delta_n \nabla^2 w_n \Vert^p_{L^p(\Omega)} \le CM\delta_n^{p\alpha}$. (The latter follows from $w_n \in \bar{\mathscr{S}}_{\delta_n}^M$.) Combining \eqref{lsc-slope1}-\eqref{lsc-slope3},  using  $\bar{\mathcal{D}}_{0}(u_n,w_n) = \bar{\mathcal{D}}_{0}(v,u')$, and recalling $u_n  \rightharpoonup u$, we get after some calculations
\begin{align*}
\liminf_{n \to \infty}|\partial \bar{\phi}_{\delta_n}|_{\bar{\mathcal{D}}_{\delta_n}}(u_n)&  \ge  \frac{(\bar{\phi}_0(u) - \bar{\phi}_0(v) - \bar{\phi}_0(u'-u)  +2\int_\Omega \C_W[e(u'-u),e(v)])^+}{\bar{\mathcal{D}}_{0}(v,u')}\\
& \ge  \frac{(\bar{\phi}_0(u) - \bar{\phi}_0(v))^+}{\bar{\mathcal{D}}_{0}(v,u)} -C\eps
\end{align*}
for some $C>0$ depending only on $u$, $u'$ and $v$. Letting first $\eps\to 0$ and taking then the supremum with respect to $v$ we get
\begin{align*}
\liminf_{n \to \infty}|\partial \bar{\phi}_{\delta_n}|_{\bar{\mathcal{D}}_{\delta_n}}(u_n)  \ge \sup_{v \in C_c^\infty(\Omega), v \neq u} 
&  \frac{(\bar{\phi}_0(u) - \bar{\phi}_0(v))^+}{\bar{\mathcal{D}}_{0}(v,u)}.
\end{align*}
In view of Lemma \ref{lemma: slopes}(ii), the claim now follows by approximating each $v \in H^1_0(\Omega)$ by a sequence of smooth functions noting that the right hand side is continuous with respect to $H^1(\Omega)$-convergence. \eop

 \subsection{Passage from nonlinear to linear viscoelasticity}

In this section we now give the proof of Theorem \ref{maintheorem3}. For the whole section we fix  a null sequence $(\delta_k)_k$ and sequence of initial data $(y_0^k)_{k\in \N} \subset W^{2,p}_\id(\Omega)$ such that $\delta_k^{-1}(y^k_0 - \id) \to u_0 \in H_0^1(\Omega)$. Moreover, we fix $M>0$ so large that $y_0^k \in \mathscr{S}_{\delta_k}^M$ for $k \in \N$.

\begin{proof}[Proof of Theorem  \ref{maintheorem3}(i)]
Let $\tau>0$ and let $\tilde{Y}_\tau^{\delta_k}$ as in \eqref{ds} be a discrete solution. For each $k \in \N$ we then have the sequence $(U^n_k)_{n \in \N}$ with  $U^n_k= \delta_k^{-1}(\tilde{Y}_\tau^{\delta_k}(n\tau) - \id) \in \bar{\mathscr{S}}_{\delta_k}^M$ for $n \in \N$. We need to show that there exists a sequence $(U^n_0)_{n \in \N}$ with $U^0_0 = u_0$ such that
 $$(i) \ \ U^n_0 = {\rm argmin}_{v \in H^1_0(\Omega)} \bar{\Phi}_0(\tau,U^{n-1}_0; v), \ \ \ \ (ii) \ \ \text{$U^n_k \to U^n_0$ strongly in $H^1(\Omega)$} $$
for all $n \in \N$. We show this property by induction.
 
  Suppose $(U^i_0)_{i=0}^n$ have been found such that the above properties hold. In particular, we note that  (ii) holds for $n=0$ by assumption. We now pass from step $n$ to $n+1$. 
  
As $U^n_k \to U^n_0$ strongly in $H^1(\Omega)$ and thus   by Theorem \ref{th: Gamma}(ii) $\bar{\Phi}_{\delta_k}(\tau,U^{n}_k; \cdot)$ $\Gamma$-converges to  $\bar{\Phi}_0(\tau,U^{n}_0; \cdot)$, we derive by properties of $\Gamma$-convergence that the (unique) minimizer of $\bar{\Phi}_0(\tau,U^{n}_0; \cdot)$, denoted by  $U^{n+1}_0$, is the limit of minimizers of  $\bar{\Phi}_{\delta_k}(\tau,U^{n}_k; \cdot)$. Consequently,  we obtain $U^{n+1}_k \rightharpoonup U^{n+1}_0$ weakly in $H^1(\Omega)$ and  $\bar{\Phi}_{\delta_k}(\tau,U^{n}_k; U^{n+1}_k) \to \bar{\Phi}_0(\tau,U^{n}_0; U^{n+1}_0) $. Thus,  Lemma \ref{lemma: energy} implies that the sequence even converges strongly in $H^1(\Omega)$.  This concludes the induction step.
\end{proof}

In the following let $u$ be the unique element of $MM(\bar{\Phi}_0;u_0)$. 

\begin{proof}[Proof of Theorem  \ref{maintheorem3}(ii)]
We let $\sigma$ be the weak $H^1(\Omega)$-topology. We consider the sequence of metrics $\mathcal{D}_k = \bar{\mathcal{D}}_{\delta_k}$ on $H^1_0(\Omega)$ and the functionals $\phi_k = \bar{\phi}_{\delta_k}$ as well as the limiting objects $\bar{\mathcal{D}}_0$ and $\bar{\phi}_0$. We note that  \eqref{compatibility} is satisfied due to Lemma \ref{lemma: metric space-properties}(iii) and the fact that $\bar{\mathcal{D}}_0$ is quadratic and convex (see Lemma \ref{D-lin}(iii)). Moreover, also \eqref{eq: implication} is satisfied by the $\Gamma$-liminf inequality in Lemma \ref{th: Gamma}(i) and Lemma \ref{lemma: lsc-slope}.

Finally, also \eqref{basic assumptions2} holds. In fact, by the rigidity estimate in Lemma \ref{lemma:rigidity}(i) and \eqref{nonlinear energy}, \eqref{assumptions-W}(iii) we find for all $k \in \N$ and $u \in \bar{\mathscr{S}}_{\delta_k}^M$  letting $y = \id + \delta_k u$
\begin{align}\label{go to limit1}
\begin{split}
\Vert u \Vert^2_{H^1(\Omega)} &= \delta_k^{-2} \Vert y-\id \Vert^2_{H^1(\Omega)}  \le C\delta_k^{-2} \Vert \dist(\nabla y,SO(d)\Vert^2_{L^2(\Omega)} \\&\le C\delta_k^{-2} \phi_{\delta_k}(y) \le  CM.
\end{split}
\end{align}

Now consider a  sequence $(y_k)_k$ of generalized minimizing movements starting from $y_0^k$ with $\delta_k^{-1}(y_0^k-\id) \to u_0$ in $H^1(\Omega)$. For convenience we also introduce the curves $u_k = \delta_k^{-1}(y_k - \id)$. Fix $M>0$ so large that $y_0^k \in \mathscr{S}_{\delta_k}^M$ for $k \in \N$.  As $\bar{\phi}_{\delta_k}(u_k(t)) \le \phi_{\delta_k}(y_k^0)$ for all $t \ge 0$, we get $\sup_k\sup_t (\phi_{\delta_k}(u_k(t)) + \mathcal{D}_k(u_k(t),u_0)) < \infty$ by \eqref{go to limit1} and Lemma \ref{lemma: metric space-properties}(iii). 

Consequently, also   \eqref{eq: abstract assumptions1}(i) holds and \eqref{eq: abstract assumptions1}(ii) is satisfied by the assumption on the initial data and Lemma \ref{lemma: metric space-properties}(iv). Since the slopes are strong upper gradients by Lemma \ref{lemma: slopes}, we can apply Theorem \ref{th:abstract convergence 1} and the existence of a limiting curve of maximal slope follows. As this curve is uniquely given by $u$ (see Theorem \ref{maintheorem2}(ii)), we indeed obtain $u_k(t) \rightharpoonup u(t)$ weakly in $H^1(\Omega)$ for all $t \in [0,\infty)$ up to a subsequence.  Since the limit is unique, we see that the whole sequence converges to $u$ by Urysohn's subsequence principle.

It remains to observe that the convergence is actually strong. This follows from the fact that $\lim_{k \to \infty}\bar{\phi}_{\delta_k}(u_k(t)) = \bar{\phi}_0 (u(t))$  for all $t \in [0,\infty)$ (see Theorem \ref{th:abstract convergence 1}) and    Lemma \ref{lemma: energy}. \end{proof}

\begin{proof}[Proof of Theorem  \ref{maintheorem3}(iii)]
Proceeding as in the previous proof, we see that all assumptions of Theorem \ref{th:abstract convergence 2} are satisfied. Therefore, we get that for any sequence of discrete solutions there is a subsequence converging pointwise weakly in $H^1(\Omega)$ to a curve of maximal slope for $\bar{\phi}_0$ which can again be identified as $u$. The strong convergence as well as the convergence of the whole sequence follow exactly as in the previous proof.
\end{proof}

\subsection{Fine representation of the slopes and  solutions to the equations}

 In this section we derive fine representations for the slopes which will allow us to relate curves of maximal slope with solutions to the equations \eqref{nonlinear equation} and \eqref{linear equation}.

Recall that $\C_D$ as defined in \eqref{linear equation} is a fourth order symmetric tensor inducing   a quadratic form $(F_1,F_2) \mapsto \C_D[F_1,F_2]$ which is positive definite on $\R^{d \times d}_{\rm sym}$ (cf. Lemma \ref{D-lin}). Moreover, it maps $\R^{d \times d}$  to $\R^{d \times d}_{\rm sym}$, denoted by $F \mapsto \C_D F$ in the following. More precisely, the mapping $F \mapsto \C_D F$  from $\R^{d \times d}_{\rm sym}$ to $\R^{d \times d}_{\rm sym}$ is bijective. By $\sqrt{\C_D}$ we denote its (unique) root and by $\sqrt{\C_D}^{-1}$ the inverse of $\sqrt{\C_D}$, both mappings defined on $\R^{d \times d}_{\rm sym}$. We start with a fine representation of the  slope in the linear setting.

\begin{lemma}[Slope in the linear setting]\label{lemma: lin-slope}
There exists a linear differential operator  $\mathcal{L}_0: H^1_0(\Omega;\R^{d}) \to L^2(\Omega;\R^{d \times d}_{\rm sym})$ satisfying ${\rm div} \mathcal{L}_0(u) =  0$ in $H^{-1}(\Omega;\R^d)$  such that  for all $u \in H^1_0(\Omega)$ we have
$$|\partial \bar{\phi}_0|_{\bar{\mathcal{D}}_0}(u)  = \Vert \sqrt{\C_D}^{-1}\big(\C_W e(u) + \mathcal{L}_0(u) \big)  \Vert_{L^2(\Omega)}.$$
Particularly, we note that $|\partial \bar{\phi}_0|^2_{\bar{\mathcal{D}}_0}$ is convex on $H^1_0(\Omega)$.
\end{lemma}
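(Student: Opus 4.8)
The plan is to exploit the Hilbert space structure hidden in the linear problem. First I would equip $H^1_0(\Omega)$ with the inner product $\langle u,w\rangle_{\C_D} := \int_\Omega \C_D[e(u),e(w)]$, which by Lemma \ref{D-lin}(iii) together with Korn's and Poincar\'e's inequalities is equivalent to the standard $H^1$-inner product and whose induced norm is exactly $\bar{\mathcal{D}}_0(u-w,0)=\bar{\mathcal{D}}_0(u,w)$. Starting from the global-slope representation in Lemma \ref{lemma: slopes}(ii), I would write $v=u-w$ with $w\in H^1_0(\Omega)$ and expand the quadratic energy (recall $f\equiv 0$) to get
\begin{align*}
\bar{\phi}_0(u) - \bar{\phi}_0(u-w) = \int_\Omega \C_W[e(u),e(w)] - \frac12\int_\Omega \C_W[e(w),e(w)] =: L(w) - \frac12 Q(w),
\end{align*}
where $L$ is a bounded linear functional on $(H^1_0(\Omega),\langle\cdot,\cdot\rangle_{\C_D})$ and $Q\ge 0$ by Lemma \ref{D-lin}(iii). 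Thus $|\partial \bar{\phi}_0|_{\bar{\mathcal{D}}_0}(u)=\sup_{w\neq 0}\big(L(w)-\frac12 Q(w)\big)^+/\Vert w\Vert_{\C_D}$.

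The key observation is that the quadratic correction $Q$ drops out of this supremum. On the one hand $Q\ge0$ yields the upper bound $(L(w)-\frac12 Q(w))^+/\Vert w\Vert_{\C_D}\le L(w)/\Vert w\Vert_{\C_D}\le\Vert L\Vert_{*}$, the dual norm. On the other hand, replacing $w$ by $tw$ for $t>0$ and letting $t\to 0^+$ shows $\sup$ is bounded below by $L(w)/\Vert w\Vert_{\C_D}$ for every $w$; hence
\begin{align*}
|\partial \bar{\phi}_0|_{\bar{\mathcal{D}}_0}(u) = \sup_{w \neq 0} \frac{L(w)}{\Vert w \Vert_{\C_D}} = \Vert g\Vert_{\C_D},
\end{align*}
where by the Riesz representation theorem (equivalently Lax--Milgram for the coercive form $\langle\cdot,\cdot\rangle_{\C_D}$) there is a unique $g=g(u)\in H^1_0(\Omega)$, depending linearly on $u$, with
\begin{align*}
\int_\Omega \C_D[e(g),e(w)] = \int_\Omega \C_W[e(u),e(w)] \qquad \forall\, w \in H^1_0(\Omega).
\end{align*}
Using that $\sqrt{\C_D}$ is self-adjoint on $\R^{d\times d}_{\rm sym}$ we rewrite $\Vert g\Vert_{\C_D}=\big(\int_\Omega\C_D[e(g),e(g)]\big)^{1/2}=\Vert \sqrt{\C_D}\,e(g)\Vert_{L^2(\Omega)}$.

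Finally I would set $\mathcal{L}_0(u):=\C_D\,e(g(u))-\C_W\,e(u)$. By the minor symmetries of $\C_W$ and $\C_D$ (a consequence of frame indifference, cf. Lemma \ref{D-lin}) both terms take values in $\R^{d\times d}_{\rm sym}$, so $\mathcal{L}_0(u)\in L^2(\Omega;\R^{d\times d}_{\rm sym})$; linearity of $u\mapsto g(u)$ makes $\mathcal{L}_0$ linear; and, since $\mathcal{L}_0(u)$ is symmetric, the weak identity above reads $\int_\Omega \mathcal{L}_0(u):\nabla w=0$ for all $w\in H^1_0(\Omega)$, i.e. ${\rm div}\,\mathcal{L}_0(u)=0$ in $H^{-1}(\Omega;\R^d)$. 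As $\C_W e(u)+\mathcal{L}_0(u)=\C_D e(g)$, applying $\sqrt{\C_D}^{-1}$ gives $\sqrt{\C_D}^{-1}(\C_W e(u)+\mathcal{L}_0(u))=\sqrt{\C_D}\,e(g)$, whence $\Vert\sqrt{\C_D}^{-1}(\C_W e(u)+\mathcal{L}_0(u))\Vert_{L^2(\Omega)}=\Vert\sqrt{\C_D}\,e(g)\Vert_{L^2(\Omega)}=|\partial\bar{\phi}_0|_{\bar{\mathcal{D}}_0}(u)$. The convexity of $|\partial\bar{\phi}_0|^2_{\bar{\mathcal{D}}_0}$ is then immediate, being the squared $L^2$-norm of the linear map $u\mapsto \sqrt{\C_D}^{-1}(\C_W e(u)+\mathcal{L}_0(u))$.

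The only genuine subtlety, and the step I would treat most carefully, is the passage from the global slope with its quadratic correction $-\frac12 Q$ and positive part to the clean dual norm of $L$: the scaling argument $t\to 0^+$ must be combined with the sign information $Q\ge 0$ to pin the supremum exactly, and the degenerate case $L\equiv 0$ (corresponding to critical points, where $g=0$ and both sides vanish) should be noted separately. Once the slope is identified with $\Vert g\Vert_{\C_D}$, exhibiting $\mathcal{L}_0$ with the asserted mapping and divergence-free properties is routine elliptic theory.
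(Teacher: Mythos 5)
Your proposal is correct and follows essentially the paper's own argument: the paper likewise reduces the slope (working from the local-slope limsup and discarding the quadratic remainder, just as your scaling $t\to 0^+$ with $Q\ge 0$ does for the global slope) to $\sup_{w\neq 0} \int_\Omega \C_W[e(u),e(w)] \, / \, \Vert \sqrt{\C_D}\, e(w)\Vert_{L^2(\Omega)}$, introduces the same auxiliary function $\bar{w}$ (your Riesz representative $g$, characterized by the identical weak equation) via the corresponding variational problem, and sets $\mathcal{L}_0(u)=\C_D e(\bar{w})-\C_W e(u)$ exactly as you do. Your appeal to the Riesz representation theorem merely packages the paper's by-hand conclusion (Cauchy--Schwarz for the upper bound, testing with $\bar{w}$ for the lower bound), and your explicit treatment of the degenerate case $L\equiv 0$ and of the symmetry of the values of $\mathcal{L}_0$ is a welcome tightening of details the paper leaves implicit.
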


\Proof
Recalling \eqref{linear energy} (for $f \equiv 0 $), \eqref{eq: D,D0}, Definition \ref{main def2}(ii), and Lemma \ref{D-lin} we have
\begin{align}\label{lin-slope1}
|\partial \bar{\phi}_0|_{\bar{\mathcal{D}}_0}(u) &= \limsup_{v \to u} \frac{(\bar{\phi}_0(u) - \bar{\phi}_0(v))^+}{\bar{\mathcal{D}}_0(u,v)}\\
&= \limsup_{v \to u} \frac{ (\int_\Omega  \C_W[e(u), e(u-v)] -  \frac{1}{2}\C_W[e(v-u), e(v-u)])^+} {(\int_\Omega \C_D[e(u-v),e(u-v)])^{1/2}}\notag\\
&= \limsup_{v \to u} \frac{ \int_\Omega  \C_W[e(u), e(u-v)]} {\Vert \sqrt{\C_D} e(u-v) \Vert_{L^2(\Omega)}} = \sup_{w \neq 0} \frac{ \int_\Omega  \C_W[e(u), e(w)]} {\Vert \sqrt{\C_D} e(w) \Vert_{L^2(\Omega)}},\notag
\end{align}
where in the second step we  used  $\int_\Omega \C_W[e(v-u), e(v-u)] / \Vert \sqrt{\C_D} e(u-v) \Vert_{L^2(\Omega)} \to 0$ as $v \to u$. Let $\bar{w}$ be  \MK  the unique \EEE solution to the minimization problem
 $$\min_{v \in H^1_0(\Omega)}  \int_\Omega \Big(\frac{1}{2}|\sqrt{\C_D} e(v)|^2 - \int_\Omega  \C_W[e(u), e(v)]\Big)\ .  $$
 \MK Clearly,  $\bar{w}$ necessarily satisfies \EEE
 $$\int_\Omega \big(\sqrt{\C_D} e(\bar{w}) : \sqrt{\C_D}e(\varphi) - \C_W [e(u),e(\varphi)]\big) = 0 $$
 for all $\varphi \in H^1_0(\Omega)$. This condition can also be formulated as
\begin{align}\label{lin-slope2}
\mathcal{L}_0(u): e( \varphi) =  0 \ \ \forall \varphi \in H^1_0(\Omega), \ \ \text{where} \ \ \mathcal{L}_0(u): = \C_D e(\bar{w})   - \C_We(u).
\end{align}
 As the solution $\bar{w}$ depends linearly on $u$, we also get that $\mathcal{L}_0$ is a linear operator.  By \eqref{lin-slope1} and the property of $\mathcal{L}_0$ we now find
\begin{align*}
|\partial \bar{\phi}_0|_{\bar{\mathcal{D}}_0}(u) &= \sup_{w \neq 0} \frac{ \int_\Omega  (\C_W e(u) + \mathcal{L}_0(u)) : e(w)} {\Vert \sqrt{\C_D} e(w) \Vert_{L^2(\Omega)}} \\&= \sup_{w \neq 0} \frac{ \int_\Omega  \big(\sqrt{\C_D}^{-1}(\C_We(u) + \mathcal{L}_0(u)) \big) : \sqrt{\C_D}e(w)} {\Vert \sqrt{\C_D} e(w) \Vert_{L^2(\Omega)}}\\
& \le  \Vert \sqrt{\C_D}^{-1}(\C_We(u) + \mathcal{L}_0(u))  \Vert_{L^2(\Omega)},
 \end{align*}
 where in the last step we used the Cauchy-Schwartz inequality. On the other hand, by definition of $\mathcal{L}_0$ in \eqref{lin-slope2}, we get
\begin{align*}
|\partial \bar{\phi}_0|_{\bar{\mathcal{D}}_0}(u)& \ge 
 \frac{ \int_\Omega  \big(\sqrt{\C_D}^{-1}(\C_We(u)  + \mathcal{L}_0(u)) \big) : \sqrt{\C_D}e(\bar{w})} {\Vert \sqrt{\C_D} e(\bar{w}) \Vert_{L^2(\Omega)}} \\
 & = \Vert \sqrt{\C_D} e(\bar{w}) \Vert_{L^2(\Omega)} = \Vert \sqrt{\C_D}^{-1}(\C_We(u) + \mathcal{L}_0(u))  \Vert_{L^2(\Omega)}. 
\end{align*} 
 This concludes the proof. \eop

Recall the definition of the symmetric fourth order tensor $H_Y = \frac{1}{2}\partial^2_{F_1^2} D^2(Y,Y)$  for  $Y \in GL_+(d)$ (see before Lemma \ref{lemma: metric space-properties}).  Let $Y \in \R^{d \times d}$ be in a small neighborhood of $\Id$ such that $Y^{-1}$ exists. Similarly to the discussion before Lemma \ref{lemma: lin-slope}, we get that $H_Y$ induces a bijective mapping from $Y^{-\top}\R^{d \times d}_{\rm sym}$ to $Y\R^{d \times d}_{\rm sym}$ by using frame indifference \eqref{eq: assumptions-D}(v) and the growth assumption \eqref{eq: assumptions-D}(vi).  We then introduce $\sqrt{H_Y}$ as a bijective mapping from $Y^{-\top}\R^{d \times d}_{\rm sym}$ to $Y\R^{d \times d}_{\rm sym}$. In a similar fashion, we introduce the inverse $\sqrt{H_Y}^{-1}$.

 For a given deformation $y: \Omega \to \R^d$ we introduce a mapping $H_{\nabla y}: \Omega \to \R^{d \times d \times d \times d}$   by $H_{\nabla y}(x) = H_{\nabla y(x)}$ for $x \in \Omega$. We note by Lemma \ref{lemma:rigidity}(ii), the fact that $D \in C^3$, and a continuity argument that   
\begin{align}\label{continuity for H}
\Vert \sqrt{H_\Id} - \sqrt{H_{\nabla y}} \Vert_{L^\infty(\Omega)}  \le C\delta^\alpha 
\end{align}
 for all $y \in \mathscr{S}_\delta^M$ for a sufficiently large constant $C>0$. Moreover, recall the definition of the operator $\mathcal{L}_P: \lbrace \nabla^2 u: u \in W^{2,p}_\id(\Omega)\rbrace \to W^{-1,\frac{p}{p-1}}(\Omega;\R^{d \times d})$ in \eqref{LP-def}.   We write  $\beta = \delta^{2-\alpha p}$ in the following for convenience.  Note that $\int_\Omega   \partial_GP(\nabla^2 y) : \nabla^2 \varphi = \mathcal{L}_P(y) : \nabla \varphi $ for all $y \in W^{2,p}_\id(\Omega)$ and $\varphi \in W^{2,p}_0(\Omega)$, where the boundary term vanishes due to $\nabla \varphi  =0$ on $\partial \Omega$.  We now obtain the following result.

\begin{lemma}[Slope in the nonlinear setting]\label{lemma: nonlin-slope}
There exists a differential operator $\mathcal{L}^*_P: \lbrace y \in W^{2,p}_\id(\Omega): {\rm div}\mathcal{L}_P(\nabla^2 y) \in H^{-1}(\Omega; \R^{d}) \rbrace \to L^2(\Omega; \R^{d\times d})$ satisfying ${\rm div}\mathcal{L}^*_P(y) =  {\rm div}\mathcal{L}_P(\nabla^2 y)$ in $H^{-1}(\Omega; \R^{d})$ such that for $\delta>0$ small enough and for all $y \in \mathscr{S}_\delta^M$ we have
\begin{align*}
|\partial \phi_\delta|_{{\mathcal{D}}_\delta}(y)  = \begin{cases} 
 \tfrac{1}{\delta}\Vert \sqrt{H_{\nabla y}}^{-1}\big(\partial_FW(\nabla y) + \beta\mathcal{L}^*_P (y) \big)  \Vert_{L^2(\Omega)} & \text{if} \ {\rm div}\mathcal{L}_P(\nabla^2 y) \in H^{-1}(\Omega),\\
 + \infty & \text{else}.
 \end{cases} 
 \end{align*}
\end{lemma}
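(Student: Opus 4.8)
The plan is to mirror the derivation of the linear slope in Lemma~\ref{lemma: lin-slope}, replacing the constant tensor $\C_D$ by the state-dependent tensor $H_{\nabla y}$, and the linear stress $\C_W e(u)$ by the nonlinear stress $\partial_F W(\nabla y)$ together with the (a priori merely distributional) hyperstress contribution $\beta\,\partial_G P(\nabla^2 y)$. First I would rewrite the local slope as the limsup of $(\phi_\delta(y)-\phi_\delta(w))^+/\mathcal{D}_\delta(y,w)$ and expand the numerator to first order in $v:=y-w$. Since $W$ is $C^3$ near $SO(d)$ and $\nabla y,\nabla w$ lie in a $C\delta^\alpha$-neighbourhood of $\Id$ by Lemma~\ref{lemma:rigidity}(ii), a Taylor expansion gives $\delta^{-2}\int_\Omega(W(\nabla y)-W(\nabla w)) = \delta^{-2}\int_\Omega \partial_F W(\nabla y):\nabla v + \delta^{-2}O(\Vert\nabla v\Vert_{L^2(\Omega)}^2)$. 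For the higher-order term I would use only the one-sided convexity bound $P(\nabla^2 y)-P(\nabla^2 w)\le \partial_G P(\nabla^2 y):(\nabla^2 y-\nabla^2 w)$ together with the integration-by-parts identity recorded before the statement. As $\mathcal{D}_\delta(y,w)\ge \tfrac{c}{\delta}\Vert v\Vert_{H^1(\Omega)}$ by Lemma~\ref{lemma: metric space-properties}(ii), the quadratic remainder is $o(\mathcal{D}_\delta(y,w))$ and drops out of the limsup.

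Combining this with the quadratic expansion $\delta^2\mathcal{D}_\delta(y,w)^2 = \int_\Omega H_{\nabla y}[\nabla v,\nabla v] + O(\Vert\nabla v\Vert_{L^3(\Omega)}^3)$ from Lemma~\ref{lemma: metric space-properties}(i), and using that the resulting ratio is $0$-homogeneous in $v$ (so that the cubic correction is negligible along $v=t\,v_0$, $t\downarrow 0$), I would obtain
\[
|\partial \phi_\delta|_{\mathcal{D}_\delta}(y) = \frac{1}{\delta}\sup_{v\neq 0}\frac{\ell_y(v)}{\big(\int_\Omega H_{\nabla y}[\nabla v,\nabla v]\big)^{1/2}}, \qquad \ell_y(v):=\int_\Omega \partial_F W(\nabla y):\nabla v + \beta\int_\Omega \partial_G P(\nabla^2 y):\nabla^2 v.
\]
The ``$\le$'' direction follows from the one-sided estimate above for arbitrary competitors; for ``$\ge$'' one tests with smooth directions $v_0$ and $w=y-tv_0$, where $C^1$-differentiability of $P$ makes the convexity inequality asymptotically sharp. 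Note that only competitors with $\phi_\delta(w)<\phi_\delta(y)\le M$ contribute to the positive part, so membership of $w$ in $\mathscr{S}_\delta^M$ is automatic.

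The crucial structural point is the dichotomy. By Lemma~\ref{D-lin}(iii) and estimate~\eqref{continuity for H}, for $\delta$ small the form $(v_1,v_2)\mapsto\int_\Omega H_{\nabla y}[\nabla v_1,\nabla v_2]$ is, via Korn's and Poincar\'e's inequalities, an inner product on $H^1_0(\Omega)$ equivalent to the $H^1$-product, so the supremum above is a dual norm in a Hilbert space. The first summand of $\ell_y$ is bounded on $H^1_0(\Omega)$ since $\partial_F W(\nabla y)\in L^\infty(\Omega)$, whereas the second equals $-\langle {\rm div}\,\mathcal{L}_P(\nabla^2 y),v\rangle$ after integration by parts, which extends to a bounded functional on $H^1_0(\Omega)$ if and only if ${\rm div}\,\mathcal{L}_P(\nabla^2 y)\in H^{-1}(\Omega)$. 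In that case $\ell_y$ is bounded and the supremum is finite; otherwise (since smooth functions are dense in $H^1_0(\Omega)$) the supremum over smooth directions is already $+\infty$, realized by admissible $w=y-tv_0$, yielding the second branch of the formula.

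Finally, when ${\rm div}\,\mathcal{L}_P(\nabla^2 y)\in H^{-1}(\Omega)$, I would invoke Riesz representation, equivalently solve the strictly convex problem $\min_v \tfrac12\int_\Omega H_{\nabla y}[\nabla v,\nabla v]-\ell_y(v)$, to obtain a unique $\bar v\in H^1_0(\Omega)$ with $\int_\Omega H_{\nabla y}[\nabla\bar v,\nabla v]=\ell_y(v)$ for all $v$. Setting $\mathcal{L}^*_P(y):=\beta^{-1}\big(H_{\nabla y}\nabla\bar v-\partial_F W(\nabla y)\big)\in L^2(\Omega)$, the Euler--Lagrange identity gives at once $\int_\Omega \mathcal{L}^*_P(y):\nabla v=\int_\Omega \partial_G P(\nabla^2 y):\nabla^2 v$ for all $v$, that is ${\rm div}\,\mathcal{L}^*_P(y)={\rm div}\,\mathcal{L}_P(\nabla^2 y)$ in $H^{-1}(\Omega)$, and $\partial_F W(\nabla y)+\beta\mathcal{L}^*_P(y)=H_{\nabla y}\nabla\bar v$. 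The supremum is then attained at $\bar v$ by Cauchy--Schwarz and equals $\big(\int_\Omega H_{\nabla y}[\nabla\bar v,\nabla\bar v]\big)^{1/2}=\Vert\sqrt{H_{\nabla y}}^{-1}(\partial_F W(\nabla y)+\beta\mathcal{L}^*_P(y))\Vert_{L^2(\Omega)}$, where I use that $\partial_F W(\nabla y)$ (by frame indifference of $W$) and $H_{\nabla y}\nabla\bar v$ take values in the range $(\nabla y)\R^{d\times d}_{\rm sym}$ on which $\sqrt{H_{\nabla y}}^{-1}$ is defined and $\sqrt{H_{\nabla y}}^{\,2}=H_{\nabla y}$. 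Multiplying by $\tfrac1\delta$ yields the claim. The main obstacle is the careful treatment of the merely $C^1$, convex term $P$: since $\mathcal{D}_\delta$-convergence only entails weak $W^{2,p}$-convergence of the competitors (Theorem~\ref{th: metric space}(ii)), $\partial_G P(\nabla^2 w)$ need not converge, so the first-order hyperstress term must be extracted from the one-sided convexity inequality for the upper bound and recovered through smooth test directions for the lower bound; pinning down the sharp boundedness threshold ${\rm div}\,\mathcal{L}_P(\nabla^2 y)\in H^{-1}(\Omega)$ and the subspace structure of $H_{\nabla y}$ is the other delicate point.
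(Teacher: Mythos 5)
Your proposal is correct, and its core coincides with the paper's proof: the same weighted auxiliary problem $\min_v \tfrac12\int_\Omega|\sqrt{H_{\nabla y}}\nabla v|^2-\ell_y(v)$ (coercive for small $\delta$ by \eqref{continuity for H}, Lemma \ref{D-lin}(iii), and Korn's inequality), the same definition $\mathcal{L}^*_P(y)=\beta^{-1}(H_{\nabla y}\nabla\bar v-\partial_FW(\nabla y))$ with the Euler--Lagrange identity giving ${\rm div}\,\mathcal{L}^*_P(y)={\rm div}\,\mathcal{L}_P(\nabla^2 y)$, the same upper bound via the one-sided convexity inequality for $P$, the Taylor expansion of $W$ (Lemma \ref{lemma:C1}), the expansion of $\mathcal{D}_\delta$ from Lemma \ref{lemma: metric space-properties}(i), and Cauchy--Schwarz, and the same lower bound via ray competitors $w=y-tv_0$ with smooth $v_0$ (the paper takes $v_0=w_\eps$ approximating $\bar w$; you take the supremum over all smooth directions and invoke Riesz, which is equivalent). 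Where you genuinely diverge is the $+\infty$ branch: the paper approximates $y$ by smooth $y_n$ in $W^{2,p}(\Omega)$, transfers the slope through the global sup representation of Lemma \ref{lemma: slopes}(i) using $W^{2,p}$-continuity of energy and dissipation, and shows $\mathcal{L}^*_P(y_n)$ blows up in $L^2(\Omega)$; you instead argue directly at $y$ that $\ell_y$ fails to be $H^1$-bounded on smooth test directions exactly when ${\rm div}\,\mathcal{L}_P(\nabla^2 y)\notin H^{-1}(\Omega)$ (the $\partial_FW$ term being bounded by Lemma \ref{lemma:rigidity}(ii)), and realize the blow-up with the same ray competitors, whose admissibility in $\mathscr{S}_\delta^M$ you correctly note is automatic for competitors that decrease the energy. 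Your route is more self-contained --- it needs neither the stability of the slope representation under $W^{2,p}$-perturbations nor the well-definedness of $\mathcal{L}^*_P$ at the approximating smooth points --- at the price of the explicit intermediate dual-norm identity, which the paper only establishes implicitly through its two bounds. Two small points you should spell out in a full write-up: in the upper bound for \emph{arbitrary} competitors, the cubic error in the denominator is absorbed using $\Vert\nabla(y-w)\Vert_{L^3}^3\le\Vert\nabla(y-w)\Vert_{L^\infty}\Vert\nabla(y-w)\Vert_{L^2}^2$ together with the fact that $\mathcal{D}_\delta$-convergence within $\mathscr{S}_\delta^M$ forces $\Vert\nabla w-\nabla y\Vert_{L^\infty(\Omega)}\to 0$ (Theorem \ref{th: metric space}(ii)), not merely weak $W^{2,p}$-convergence; and the $H^1$-continuous extension of $\ell_y$ must be checked to agree with the literal integral on $W^{2,p}_0(\Omega)$ competitors, which follows from density of $C_c^\infty(\Omega)$ in $W^{2,p}_0(\Omega)$ and H\"older's inequality with $\partial_GP(\nabla^2 y)\in L^{p/(p-1)}(\Omega)$.
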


\begin{rem}\label{rem-slope}
{\normalfont
We remark that the expression is well defined in the following sense: If $\nabla y(x) = Y(x)$ in the above notation, then we indeed have $\partial_FW(\nabla y(x)) + \beta\mathcal{L}^*_P (y(x)) \in Y(x) \R^{d \times d}_{\rm sym}$ for a.e. $x \in \Omega$. 
}
\end{rem}

\Proof We (i) first prove the lower bond in the case  ${\rm div}\mathcal{L}_P(\nabla^2 y) \in H^{-1}(\Omega)$  and (ii) afterwards if ${\rm div}\mathcal{L}_P(\nabla^2 y) \notin H^{-1}(\Omega)$. Finally, (iii)  we establish the upper bound.

(i) Suppose that ${\rm div}\mathcal{L}_P(\nabla^2 y) \in H^{-1}(\Omega)$. Consider the minimization problem
 $$\min_{w \in H^1_0(\Omega)}  \int_\Omega \Big(\frac{1}{2}|\sqrt{H_{\nabla y}}\nabla w|^2 - (\partial_FW(\nabla y)+ \beta\mathcal{L}_P(\nabla^2 y)\Big):  \nabla w.$$
By \eqref{continuity for H}, the fact that $\sqrt{H_\Id} = \sqrt{\C_D}$, Lemma \ref{D-lin}(iii), and Korn's inequality we have
\begin{align*}
\Vert \sqrt{H_{\nabla y}}\nabla w\Vert_{L^2(\Omega)}^2& \ge \Vert \sqrt{H_\Id}\nabla w\Vert_{L^2(\Omega)}^2 - C\delta^{2\alpha} \Vert \nabla w\Vert_{L^2(\Omega)}^2 \\&\ge C\Vert e(w)\Vert_{L^2(\Omega)}^2 - C\delta^{2\alpha} \Vert \nabla w\Vert_{L^2(\Omega)}^2 \ge C\Vert \nabla w\Vert_{L^2(\Omega)}^2
\end{align*} 
for $\delta$ sufficiently small for all $w \in H^1_0(\Omega)$. Moreover, we have $|\int_\Omega \mathcal{L}_P(\nabla^2 y):  \nabla w| \le \Vert {\rm div}\mathcal{L}_P(\nabla^2 y) \Vert_{H^{-1}(\Omega)} \Vert w \Vert_{H^1(\Omega)}$ for all $w \in H^1_0(\Omega)$.  Thus, the solution  $\bar{w}$ of the problem exists, is unique,  and satisfies 
 $$(\partial_FW(\nabla y) + \beta\mathcal{L}_P(\nabla^2 y) ) :  \nabla  \varphi = \sqrt{H_{\nabla y}} \nabla \bar{w} : \sqrt{H_{\nabla y}} \nabla \varphi = H_{\nabla y} \nabla \bar{w} :   \nabla \varphi $$
 for all $\varphi \in H^1_0(\Omega)$.  Define $\mathcal{L}^*_P(y) := \beta^{-1}(  H_{\nabla y} \nabla \bar{w} - \partial_FW(\nabla y))$ and note that 
\begin{align}\label{nonlin-slope2}
\mathcal{L}^*_P(y) : \nabla \varphi  = \mathcal{L}_P(\nabla^2 y) : \nabla \varphi   \ \ \ \text{ for all $\varphi \in H^1_0(\Omega)$}  
\end{align}
 as well as $\mathcal{L}^*_P(y) \in L^2(\Omega)$. Moreover, since $\beta\mathcal{L}^*_P(y) + \partial_FW(\nabla y)=  H_{\nabla y} \nabla \bar{w}$, recalling the properties of $H_{\nabla y}$ we see that Remark \ref{rem-slope}  applies.   Fix $\eps>0$ and choose $w_\eps \in C_c^\infty(\Omega;\R^d)$ with $\Vert \bar{w} - w_\eps \Vert_{H^1(\Omega)} \le \eps$.  Letting $w_n = y - \frac{1}{n}w_\eps$ we get by  a Taylor expansion
 \begin{align*}
 n\delta^2(  \phi_\delta(w_n) - & \phi_\delta(y))  =  n\int_\Omega \partial_FW(\nabla y) : (\nabla  w_n - \nabla y) + n O(\Vert \nabla w_n- \nabla y \Vert^2_{L^2(\Omega)})\notag\\   
 &   +  n \beta \int_\Omega \partial_GP(\nabla^2 y): (\nabla^2w_n - \nabla^2 y)+ n\beta  O(\Vert \nabla^2 w_n - \nabla^2 y \Vert^2_{L^2(\Omega)}) \notag\\ 
 & \ \ \ \  \ \ = - \int_\Omega \partial_FW(\nabla y) : \nabla w_\eps - \beta \partial_GP(\nabla^2 y): \nabla^2w_\eps  + O(1/n), 
 \end{align*}
 where $O(1/n)$ depends on the choice of $w_\eps$. Similarly, we get by Lemma \ref{lemma: metric space-properties}(i)
  \begin{align*}
 n^2\delta^2\mathcal{D}_\delta(y,w_n)^2 &=  n^2 \int_\Omega    H_{\nabla y}[\nabla (y - w_n), \nabla (y - w_n)]  + n^2 O(\Vert   \nabla w_n- \nabla y\Vert^3_{L^3(\Omega)}) \notag \\& =  \Vert \sqrt{H_{\nabla y}} \nabla w_\eps \Vert^2_{L^2(\Omega)}+ O(1/n).  
  \end{align*}
  For brevity we introduce
  $$\Phi(w) =   \Big(\int_\Omega (\partial_FW(\nabla y)+\beta\mathcal{L}_P(\nabla^2 y)) : \nabla w \Big)\Vert \sqrt{H_{\nabla y}} \nabla w \Vert_{L^2(\Omega)}^{-1}. $$
 Since $\mathcal{D}_\delta(y,w_n) \to 0$, we now obtain 
 \begin{align*}
\delta|\partial \phi_\delta|_{\mathcal{D}_\delta}(y)& \ge \limsup_{n \to \infty}  \frac{\delta(\phi_\delta(y) - \phi_\delta(w_n))^+}{\mathcal{D}_\delta(y,w_n)} \\&\ge    \frac{\int_\Omega  \partial_FW(\nabla y) : \nabla w_\eps + \int_\Omega\beta \partial_GP(\nabla^2 y) : \nabla^2 w_\eps  }{\Vert \sqrt{H}_{\nabla y} \nabla w_\eps \Vert_{L^2(\Omega)} } = \Phi(w_\eps)
\end{align*} 
where in the last step we used the definition of $\mathcal{L}_P$  in \eqref{LP-def}. Recalling the definition of $\mathcal{L}^*_P$ and \eqref{nonlin-slope2} we now derive
 \begin{align*}
\Phi(\bar{w}) - \Phi(w_\eps) + \delta|\partial \phi_\delta|_{\mathcal{D}_\delta}(y)& \ge  \Phi(\bar{w})   =  \frac{\int_\Omega H_{\nabla y} \nabla \bar{w}  : \nabla \bar{w}  }{\Vert \sqrt{H_{\nabla y}} \nabla \bar{w} \Vert_{L^2(\Omega)} }   \\&
=  \frac{\int_\Omega \sqrt{H_{\nabla y}} \nabla \bar{w}  :\sqrt{H_{\nabla y}} \nabla \bar{w}  }{\Vert \sqrt{H_{\nabla y}} \nabla \bar{w} \Vert_{L^2(\Omega)} } = \Vert \sqrt{H_{\nabla y}} \nabla \bar{w} \Vert_{L^2(\Omega)} \\
& =  \Vert \sqrt{H_{\nabla y}}^{-1}\big(\partial_FW(\nabla y) + \beta\mathcal{L}^*_P (y) \big)  \Vert_{L^2(\Omega)}.
\end{align*}
By definition of $w_\eps$ we get $|\Phi(\bar{w}) - \Phi(w_\eps)| \to 0$ as $\eps \to 0$ and   the lower bound in the case ${\rm div}\mathcal{L}_P(\nabla^2  y) \in H^{-1}(\Omega)$ follows.

(ii) Now suppose that ${\rm div}\mathcal{L}_P(\nabla^2 y) \notin H^{-1}(\Omega)$. Let   $(y_n)_n$ be  a sequence of smooth functions  converging to $y$ in $W^{2,p}(\Omega)$. Then    $ \mathcal{L}^*_P(y_n)$ is not bounded in $L^2(\Omega)$. Indeed, otherwise we would get by the definition of $\mathcal{L}_P$, \eqref{assumptions-P}(iii), and \eqref{nonlin-slope2} that 
\begin{align*}
\Big|\int_\Omega \mathcal{L}_P(\nabla^2 y) : \nabla \varphi\Big| & = \Big|\int_\Omega \partial_G P(\nabla^2 y): \nabla^2 \varphi\Big| = \lim_{n \to \infty}\Big|\int_\Omega \partial_G P(\nabla^2 y_n): \nabla^2 \varphi\Big| \\
& = \lim_{n \to \infty} \Big|\int_\Omega \mathcal{L}^*_P(y_n) : \nabla\varphi\Big| \le C \Vert \nabla \varphi\Vert_{L^2(\Omega)}
\end{align*} 
for all $\varphi \in W^{2,p}_0(\Omega)$. This, however, contradicts the assumption ${\rm div}\mathcal{L}_P(\nabla^2 y) \notin H^{-1}(\Omega)$. As energy and dissipation are $W^{2,p}(\Omega)$-continuous (see \eqref{assumptions-W},\eqref{assumptions-P}, Lemma \ref{lemma: metric space-properties}(ii)), we find for some fixed $\eps>0$   and $n$ large enough by Lemma \ref{lemma: slopes}(i)  
$$\eps + |\partial \phi_\delta|_{{\mathcal{D}}_\delta}(y) \ge \sup_{w \neq y_n} \frac{(\phi_\delta(y_n) - \phi_\delta(w))^+}{\mathcal{D}_\delta(y_n,w)(1 + C \Vert \nabla y_n - \nabla w \Vert_{L^\infty(\Omega)})^{1/2}} =|\partial \phi_\delta|_{{\mathcal{D}}_\delta}(y_n).$$
By the representation of the slope at $y_n$ and the fact that  $ \mathcal{L}^*_P(y_n)$ is not bounded in $L^2(\Omega)$, the right hand side tends to infinity for $n \to \infty$, as desired.

(iii) For the upper bound, we first   use Lemma \ref{lemma: metric space-properties}(i),(ii), and  Lemma \ref{th: metric space}(ii) to get
\begin{align*}
1 & = \lim_{w \to v} \frac{\mathcal{D}_\delta(v,w)^2}{\mathcal{D}_\delta(v,w)^2} \ge \limsup_{w \to v} \frac{\Vert \sqrt{H_{\nabla v}} \nabla (w-v) \Vert^2_{L^2(\Omega)} -  C\Vert \nabla v - \nabla w \Vert^3_{L^3(\Omega)}}{\delta^2\mathcal{D}_\delta(v,w)^2} \\
& \ge   \limsup_{w \to v} \frac{\Vert \sqrt{H_{\nabla v}} \nabla (w-v) \Vert^2_{L^2(\Omega)} }{\delta^2\mathcal{D}_\delta(v,w)^2} - C\limsup_{w \to v}   \Vert \nabla v - \nabla w \Vert_{L^\infty(\Omega)}\\
& =   \limsup_{w \to v} \frac{\Vert \sqrt{H_{\nabla v}} \nabla (w-v) \Vert^2_{L^2(\Omega)} }{\delta^2\mathcal{D}_\delta(v,w)^2}.
\end{align*}
This together with Lemma \ref{lemma:C1} and the convexity of $P$ gives
\begin{align*}
\delta|\partial \phi_\delta|_{\mathcal{D}_\delta}(y) & =   \limsup_{w \to y}\frac{\delta^2(\phi_\delta(y) - \phi_\delta(w))^+}{\delta\mathcal{D}_\delta(y,w)}   \\&\le \limsup_{w \to y} \frac{\int_\Omega \partial_FW(\nabla y) : \nabla (y -  w) + \int_\Omega \beta \partial_GP(\nabla^2 y): \nabla^2 (y -w)}{\Vert \sqrt{H_{\nabla y}} (\nabla w - \nabla y) \Vert_{L^2(\Omega)} }.
\end{align*}
Recalling the definition of $\mathcal{L}_P$ and using \eqref{nonlin-slope2} as in the lower bound, we get
\begin{align*}
\delta|\partial \phi_\delta|_{\mathcal{D}_\delta}(y) &   \le \limsup_{w \to y} \frac{\int_\Omega (\partial_FW(\nabla y) + \beta\mathcal{L}^*_P(y) ) : \nabla (y -  w)}{\Vert \sqrt{H_{\nabla y}} (\nabla w - \nabla y) \Vert_{L^2(\Omega)} }.
\end{align*}
Finally, the Cauchy Schwartz inequality gives
\begin{align*}
\delta|\partial \phi_\delta|_{\mathcal{D}_\delta}(y) &   \le \limsup_{w \to y} \frac{\int_\Omega \sqrt{H_{\nabla y}}^{-1}(\partial_FW(\nabla y) + \beta\mathcal{L}^*_P(y) ) : \sqrt{H_{\nabla y}}\nabla (y -  w)}{\Vert \sqrt{H_{\nabla y}} (\nabla w - \nabla y) \Vert_{L^2(\Omega)} } \\
& \le \Vert \sqrt{H_{\nabla y}}^{-1}(\partial_FW(\nabla y) + \beta\mathcal{L}^*_P(y)) \Vert_{L^2(\Omega)}. 
\end{align*}
 \eop
 
 Finally, following \cite[Section 1.4]{AGS} we relate curves of maximal slope with solutions to the equations \eqref{nonlinear equation} and \eqref{linear equation}. Similar to \cite[Corollary 1.4.5]{AGS}, this relies on the fact that the stored energy can be written as a sum of a convex functional and a $C^1$ functional on $H^1(\Omega)$.

\begin{proof}[Proof of Theorem  \ref{maintheorem1}(iii) and Theorem  \ref{maintheorem2}(iii)]
 We only give the proof for the nonlinear equation. The proof for the linear equation is easier and can be seen along similar lines. 
 
First, the fact that  $\phi_\delta(y(t))$ is decreasing in time together with \eqref{nonlinear energy}-\eqref{assumptions-P} gives $y \in L^\infty([0,\infty ); W^{2,p}_{\id}(\Omega))$. Moreover, since  $|y'|_{\mathcal{D}_\delta} \in L^2([0,\infty ))$ by \eqref{slopesolution} and $\mathcal{D}_\delta$ is equivalent to the $H^1(\Omega)$-norm (see Lemma \ref{lemma: metric space-properties}(ii)), we observe that $y$ is an absolutely continuous curve in the Hilbert space $H^1(\Omega)$.  By \cite[Remark 1.1.3]{AGS} this implies that $y$ is differentiable for a.e. $t$ with $\partial_t \nabla y(t) \in L^2(\Omega)$ for a.e. $t$, that
\begin{align}\label{eq: derivative in Hilbert}
\nabla y(t) - \nabla y(s) = \int_s^t \partial_t \nabla y(r) \, dr \ \ \ \text{a.e. in $\Omega$ \ \ \  for all }  0 \le s < t \end{align}
and that $y \in W^{1,2}([0,\infty);H^1(\Omega))$.  More precisely, by Fatou's lemma and Lemma \ref{lemma: metric space-properties}(i) we get for a.e. $t$
\begin{align}\label{PDE1}
\begin{split}
|y'|_{\mathcal{D}_\delta}(t) & = \lim_{s \to t}\frac{\mathcal{D}_\delta(y(s),y(t))}{|s-t|} = \lim_{s \to t} \delta^{-1}\Big(\frac{\delta^2\mathcal{D}_\delta(y(s),y(t))^2}{|s-t|^2} \Big)^{1/2} \\&\ge  \delta^{-1} \Big( \int_\Omega \liminf_{s\to t} \Big(  H_{\nabla y(t)} \Big[\frac{\nabla y(s) - \nabla y(t)}{|s-t|}, \frac{\nabla y(s) - \nabla y(t)}{|s-t|}\Big] \\
&  \ \ \ \ \ \ \ \  \ \ \ \ \ \ \ \ \ \ \ \ \  \ \ \ \ \ \ \ \ \ \ \ \ \ \ \ \  \ \ \ \ \ \ \ \ + |s-t|^{-2} O(|\nabla y(t) - \nabla y(s)|^3) \Big)  \Big)^{1/2} \\
& =\delta^{-1} \Big(\int_\Omega H_{\nabla y(t)}[\partial_t \nabla y(t),\partial_t \nabla y(t) ]\Big)^{1/2} = \delta^{-1}\Vert \sqrt{H_{\nabla y(t)}} \partial_t \nabla y(t)\Vert_{L^2(\Omega)}. 
\end{split}
\end{align}
We now determine the derivative  $\frac{{\rm d}}{{\rm d}t} \phi_\delta(y(t))$ of the absolutely continuous curve $\phi_\delta \circ y$.  Fix  $t$ such that  $\lim_{s \to t}\frac{\mathcal{D}_\delta(y(s),y(t))}{|s-t|}$ exists, which holds for a.e. $t$.  Then by Lemma \ref{lemma:C1} we find 
 $$\lim_{s \to \infty}  \frac{\int_\Omega W(\nabla y(s)) - \int_\Omega W(\nabla y(t)) - \int_\Omega \partial_FW(\nabla y(t)) : (\nabla y(s) - \nabla y(t))}{s-t} = 0.$$
The previous estimate  together with the convexity of $P$ yields
\begin{align*}
\frac{{\rm d}}{{\rm d}t} \phi_\delta(y(t)) & = \lim_{s \to t} \frac{\phi_\delta(y(s)) - \phi_\delta(y(t))}{s-t} \\&\ge \liminf_{s \to t}   \frac{1}{\delta^2(s-t)}\int_\Omega \Big(\partial_FW(\nabla y(t)) : (\nabla y(s) - \nabla y(t)) \\& \ \ \ \ \  \ \ \  \  \ \ \ \ \ \ \ \ \ \ \ \ \ \ \ \ \ \  \ \ \ \  + \beta \partial_GP(\nabla^2 y(t)) :  (\nabla^2 y(s) - \nabla^2 y(t))  \Big)\\ 
& = \liminf_{s \to t}   \frac{1}{\delta^2(s-t)}\int_\Omega \big(\partial_FW(\nabla y(t))+ \beta\mathcal{L}^*_P(y(t)) \big) : (\nabla y(s) - \nabla y(t)),  
\end{align*}
where as before $\beta = \delta^{2-\alpha p}$. In the last step we integrated by parts and used ${\rm div}(\mathcal{L}^*_P(y(t))) =  {\rm div}(\mathcal{L}_P(\nabla^2 y(t)))$ by Lemma \ref{lemma: nonlin-slope}. Note that the last term is well defined as  $\mathcal{L}^*_P(y(t)) \in L^2(\Omega)$ for a.e. $t$ by Lemma \ref{lemma: nonlin-slope}  and \eqref{slopesolution}. Now \eqref{eq: derivative in Hilbert} implies 
\begin{align*}
\frac{{\rm d}}{{\rm d}t} \phi_\delta(y(t)) & \ge \delta^{-2}\int_\Omega \sqrt{H_{\nabla y(t)}}^{-1}\big(\partial_FW(\nabla y(t)) + \beta\mathcal{L}^*_P(y(t)) \big): \sqrt{H_{\nabla y(t)}}\partial_t\nabla y(t). 
\end{align*}
We find by Lemma \ref{lemma: nonlin-slope}, \eqref{PDE1}, and Young's inequality 
$$  \frac{{\rm d}}{{\rm d}t} \phi_\delta(y(t)) \ge - \frac{1}{2}  \big(|\partial \phi_\delta|^2_{\mathcal{D}_\delta}(y(t)) +  |y'|^2_{\mathcal{D}_\delta}(t)\big) \ge \frac{{\rm d}}{{\rm d}t} \phi_\delta(y(t)),$$
where  the last step is a consequence of the fact that $y$ is a curve of maximal slope with respect to $\phi_\delta$. Consequently, all inequalities employed in the proof are in fact equalities and we get
$$  \sqrt{H_{\nabla y(t)}}^{-1}\big(\partial_FW(\nabla y(t)) + \beta\mathcal{L}^*_P(y(t)) \big) =  -\sqrt{H_{\nabla y(t)}}\partial_t\nabla y(t) 
 $$
 pointwise a.e. in $\Omega$. Equivalently, recalling $\partial_{\dot{F}}R(F,\dot{F}) = \frac{1}{2} \partial^2_{F_1^2} D^2(F,F)\dot{F} = H_{F}\dot{F}$ from  \eqref{intro:R}, we obtain 
$$   \big(\partial_FW(\nabla y(t)) + \beta\mathcal{L}^*_P(y(t)) \big)  + \partial_{\dot{F}}R(\nabla y(t),\partial_t \nabla y(t))  =0$$
 pointwise a.e. in $\Omega$.   Multiplying the equation with $\nabla \varphi$ for $\varphi \in W_0^{2,p}(\Omega)$, using again $\int_\Omega \mathcal{L}^*_P(y(t)) : \nabla \varphi =  \int_\Omega\mathcal{L}_P(\nabla^2 y(t)) : \nabla \varphi$ by Lemma \ref{lemma: nonlin-slope}, and the definition of $\mathcal{L}_P(\nabla^2 y(t)) $,  we conclude that $y$ is a weak solution (see \eqref{nonlinear equation2}). 
 \end{proof}

\noindent \textbf{Acknowledgements} This work has been funded by the Vienna Science and Technology Fund (WWTF)
through Project MA14-009. M.F.~acknowledges support by the Alexander von Humboldt Stiftung and thanks for the warm hospitality at \'{U}TIA AV\v{C}R, where this project has been initiated. M.K.~acknowledges support by the GA\v{C}R-FWF project 16-34894L.  Both authors were also supported   by the M\v{S}MT \v{C}R  mobility project 7AMB16AT015.  We wish to thank Ulisse Stefanelli for turning \BBB our \EEE attention to this problem.


 \typeout{References}

\end{document}